\theoremstyle{plain}
\newtheorem{theorem}{Theorem}[section]
\newtheorem{assumption}{Assumptions}[section]
\newtheorem{lemma}{Lemma}[section]
\newtheorem{corollary}{Corollary}[section]
\newtheorem{claim}{Claim}[section]
\theoremstyle{definition}
\newtheorem{remark}{Remark}[section]
\newtheorem{example}{Example}[section]
\newcommand{\ind}{\mathbbm{1}}
\newcommand{\asarrow}{\overset{a.s.}{\rightarrow}}
\newcommand{\parrow}{\overset{\mathbb{P}}{\rightarrow}}
\newcommand{\darrow}{\overset{d}{\rightarrow}}
\begin{document}

\title{The duration of a supercritical $SIR$ epidemic on a configuration model}

\author%[*]
{Abid Ali Lashari}
\author%[*%,$\dagger$]
{Ana Serafimovi\'{c}}
\author%[*]
{Pieter Trapman}
\affil%[*]
{Stockholm University}
%\affil[$\dagger$]{currently at Temple University}

\maketitle

\begin{abstract}
We consider the spread of a supercritical stochastic
$SIR$ (Susceptible, Infectious, Recovered) epidemic on a configuration model random  graph.
We mainly focus on the final stages of a large outbreak and provide limit results for the duration of the entire epidemic, while we allow for non-exponential distributions of the infectious period  and for both finite and infinite variance of the asymptotic degree distribution in the graph.  

Our analysis relies on the analysis of some subcritical continuous time branching processes and on ideas from first-passage percolation.  

As an application we investigate the effect of vaccination with an all-or-nothing vaccine on the duration of the epidemic. We show that if vaccination fails to prevent the epidemic, it often -- but not always -- increases the duration of the epidemic. 

Keywords: $SIR$ epidemics; Time to extinction; Branching process approximation; First passage percolation; vaccination.

MSC Primary: 60K35, 92D30, 05C80; Secondary: 60J80

\end{abstract}

\section{Introduction}

Mathematical models have been widely used to study the spread of infectious diseases and to design control strategies for reducing the impact of those diseases \cite{diekmann2012mathematical}. In many models, a key assumption for the spread of epidemics is that the individuals are uniformly mixing, i.e.\ all pairs of individuals in the population contact each other at the same rate, independently of each other. 
In order to gain some realism, a (social) network structure may be introduced to the models where contacts are only possible between ``neighbors'', which are pairs of individuals that
share a connection in the network (see e.g.\ \cite{andersson1999epidemic,newman2002spread}).  
In this set-up,
each vertex represents an individual and an edge represents 
that two individuals have a relationship that makes it possible for the disease to transmit from one to the other. 

Much is already known for (variants of) epidemics on random graphs, e.g.\ the final size of the epidemic (the fraction of the population infected during the epidemic) and the probability of a large or major outbreak (to be defined below) \cite{britton2010stochastic,ball2009threshold}. In this paper we focus on the random duration of an epidemic on a configuration model graph.
The duration of an epidemic is especially relevant for animal diseases. When outbreaks of those diseases occur, trade bans are often imposed on import from affected counties. So, from an economics perspective, it might be more important to reduce the duration of an epidemic than to reduce the number of animals killed by it.

For uniformly mixing populations Barbour \cite{barbour1975duration} provides rigorous results on the duration of (Markov) $SIR$ (Susceptible, Infectious, Recovered; see Section \ref{sec:sir} for a definition) epidemics and Britton \cite{britton2010stochastic} also sketched some results about the duration of epidemic in a uniformly mixing population. 
A corollary of their results is that, if a major outbreak occurs in a population of size $n$, the time until the epidemic goes extinct divided by $\log n$ converges to an explicit constant as $n \to \infty$.

We consider $SIR$  epidemics  on configuration model graphs in the large population limit. Configuration model graphs are random graphs with specified vertex degrees (see Section \ref{sec:conf}, or for a detailed description see \cite{durrett2007random,van2016random}). In this graph each individual/vertex has his or her given degree (number of neighbors). The edges are
created in such a way that the graph is  uniform among all possible multigraphs with the given degree sequence. 

We only consider major outbreaks of the epidemic. Formally we say that an outbreak is major if more than $\log n$ individuals get infected, where $n$ is again the number of individuals in the
population. It can be shown that this is (as $n \to \infty$) equivalent to assuming that the number of ultimately infected individuals is of the same order as the population size. 
The beginning (until a small but non-negligible  fraction of the population is infected)  and the middle part (until a small but non-negligible  fraction of the ultimately infected individuals still has to be infected) of a major outbreak on a configuration model have been studied before (e.g.\ in \cite{barbour2013approximating,Decr12,volz2008sir,janson14}). Volz \cite{volz2008sir} studied a deterministic model for the spread of an $SIR$ epidemic through a network using a set of differential equations, keeping track of the probability that a vertex of given degree avoids infection as a function of time. Under some moment conditions his results were made rigorous by Decreusefond et al. \cite{Decr12}. Using a different mathematical approach  Barbour and Reinert \cite{barbour2013approximating} study (among other things) a stochastic model for the spread of $SIR$ epidemics on a configuration model with bounded degrees and minor conditions on the infectious period distribution. The approach of the paper is tailored for finding the distribution of the time a typical individual in the population gets infected, but is not directly suitable for finding the time of the last infected individual recovering. 
Janson et al.\ \cite{janson14} study the spread of Markov $SIR$ epidemics on quite general configuration models and their analysis heavily relies on the memoryless infectious period. In none of the papers mentioned above the time until the end of the epidemic is studied. 

The spread of epidemics on random graphs can also be studied using first-passage percolation \cite{Bham14,Bham17,Amin15}. In first passage percolation i.i.d.\ weights (lengths) are assigned to edges in the graph and questions regarding distances between typical vertices in the graph can be answered. In epidemiological terms the distance between a uniformly chosen vertex and the initially infected vertex may be interpreted  as the time of infection of that uniformly chosen vertex in an $SI$ epidemic (i.e.\ an $SIR$ epidemic with infinite infectious period).  
In this setting the question regarding the time until the last infection in the epidemic corresponds to the flooding time of the giant component of the random graph \cite{Amin15}. 

In the analysis of first passage percolation on random graphs in \cite{Bham14,Bham17} growing ``balls''  around vertices are explored and the time at which the balls touch provides precise results on the distance between the center vertices of those balls. These methods are well suited for obtaining the asymptotic distribution of the distance between two vertices, but are less fit for finding flooding times and diameters (however, see \cite{Amin15}). 

As written above, we focus on the duration of the entire epidemic, and in particular on the final stages of the epidemic. We use two definitions of the end of the epidemic: i)  the time at which there are no infectious individuals in the population anymore, which we call {\it strong extinction} and ii) the first time at which there are no more infectious individuals with susceptible neighbors in the population, which we call {\it weak extinction}.  We allow for quite general infectious period distributions (see Theorems \ref{mainthm} and \ref{mainthmsec} below), and do not have to restrict ourselves to infinite infectious periods as is the case in the first passage percolation literature. Furthermore, we pose milder conditions on the degree distribution of the configuration model than Barbour and Reinert \cite{barbour2013approximating}, who also allow for relatively general infectious period distributions. Our approach is to use the results of \cite{barbour2013approximating}, which are obtained through methods similar to those used in first passage percolation, to obtain the time until a typical vertex gets infected and then use subcritical branching processes to approximate the time between the infection of a typical vertex and the end of the epidemic.      
We show that the duration of the epidemic divided by $\log n$ converges in probability to a specified constant. We note that our result is weaker in nature than the results of \cite{barbour2013approximating,Bham14,Bham17}, where asymptotic distributions of infection times/distances of uniformly chosen vertices minus their typical distances are provided. However, as stated, we allow for more general distributions of the infectious period and degree distributions.

Finally, we shortly analyze the impact of vaccinating the entire population with an all-or-nothing vaccine. This vaccine either causes an individual to be completely immune or has no impact at all independently and with the same probability for different individuals. This vaccination strategy is asymptotically equivalent to vaccinating a uniform fraction of the population with a perfect vaccine, i.e.\ a vaccine which gives complete immunity.

\subsection{Outline of paper}

The paper is structured as follows. 
In Section \ref{sec:defnot} we formally define the model and provide the main theorems of the paper. 
In Section \ref{sec:vacc} we shortly discuss the impact of vaccination on the duration of the epidemic, using the results of Section \ref{sec:defnot} and some heuristics.
In Section \ref{sec:epid} we present some techniques for analyzing epidemics on graphs. Furthermore, we summarize results on continuous time branching processes that we need in the proofs of the main theorems.
In Section \ref{sec:heur} heuristics are given for the main theorems, while in Sections \ref{secinit} and \ref{secext} these theorems are proved rigorously. In these proofs the durations of the initial and final phase of the epidemic are analysed separately.

\section{Definitions, notation and main results}
\label{sec:defnot}
\subsection{Basic notation}
\label{sec:notation} 
The following basic notation and definitions are used throughout this paper (see also e.g.\ \cite[Section 1.2]{Jans11}). For  $f: \mathbb{R} \to \mathbb{R}$ and $g: \mathbb{R} \to \mathbb{R}_{\geq 0}$  and $x \to \infty$ we write, 
\begin{equation*}
\begin{array}{llll}
  f(x)& =&O(g(x)) & \textrm{if $\limsup |f(x)|/g(x)<\infty,$}\\
  f(x) &=& o(g(x))  & \textrm{if $\lim f(x)/g(x)=0$,}\\
  f(x) &=& \Theta(g(x)) &  \textrm{if $0<\liminf|f(x)|/g(x)\leq \limsup|f(x)|/g(x)<\infty.$}
\end{array}
\end{equation*}
Also for $f: \mathbb{R} \to \mathbb{R}$, we write $f(x-) =\lim_{y \nearrow x} f(y)$.

All random processes and random variables that we consider are defined on a rich enough probability space $(\Omega,\mathcal{F},\mathbb{P})$, which we do not further specify.
The population size is always denoted by $n$. In this paper, asymptotic results and limits are for $n \to \infty$, unless explicitly stated otherwise.
We say that an event occurs {\it with high probability} (w.h.p.) if the probability of the event converges to 1. Furthermore,  $\asarrow$ denotes almost sure convergence, $\parrow$ denotes convergence in probability, and $\darrow$ denotes convergence in distribution.

We denote the set of strictly positive integers by $\mathbb{N}$ and write $\mathbb{N}_0 = \mathbb{N} \cup \{0\}$. Furthermore, $\mathbb{N}_{\leq x} = [1,x] \cap \mathbb{N}$. The sets 
$\mathbb{N}_{\geq x}$, $\mathbb{N}_{< x}$ and $\mathbb{N}_{> x}$ are defined similarly.
Throughout, the cardinality of a set $\mathcal{X}$ is denoted by $ |\mathcal{X}|$.

%For $x>0$, we define $\log^+(x) = \max(0,\log(x))$.

\subsection{Construction of the random graph and assumptions on the degree distribution}\label{sec:conf}

The epidemic spreads on a random graph $G^{(n)}=(V^{(n)},E^{(n)})$.
The set $V^{(n)}$ consists of $n$ vertices that represent the individuals, and the edge set $E^{(n)}$ represent connections/relationships of individuals through which infection might transmit. For $v \in V^{(n)}$, the degree of vertex $v$ (i.e.\ the number of edges adjacent to vertex $v$) is denoted by $d_v$. We assume that $d_v \in \mathbb{N}$, since vertices of degree 0 will not be infected anyway.
$G^{(n)}$ is generated through a configuration model with given degree sequence $\{d_v\}_{v \in V^{(n)}}$.

The graph is constructed by assigning $d_v$ half-edges (edges with only one endpoint
assigned to a vertex) to the vertex $v$ for $v \in V^{(n)}$ and pairing those half-edges uniformly at random. By this construction every vertex has the right degree, although it is possible that there is more than one edge  between a pair of vertices (parallel edges) or that an edge connects a vertex to itself (a self-loop). In the graph, parallel edges are counted separately in the degree and a self-loop adds two to the degree of a vertex.

%Without loss of generality we assume that the degree sequence is non-decreasing.
Define 
\begin{equation}
\label{ellell2}
\ell(n)= \sum_{v \in V^{(n)}} d_v \qquad \mbox{and} \qquad 
\ell_2(n)= \sum_{v \in V^{(n)}} (d_v)^2.
\end{equation}
Observe that $\ell(n)$ is even, since every edge in $E^{(n)}$ adds 2 to the total degree of the graph.
We make the following assumptions
\begin{assumption}\label{mainassump}
there exists an $\mathbb{N}$ valued  random variable $D$ such that,
\begin{itemize}
\item[(A1)] $n^{-1}\sum_{v \in V^{(n)}} \ind(d_v =k) \to p_k = \mathbb{P}(D=k) $,
\item[(A2)] $n^{-1}\ell(n) \to \mathbb{E}[D]< \infty$,
\item[(A3)] $n^{-1}\ell_2(n) \to \mathbb{E}[D^2] \leq \infty$,
\item[(A4)] If $\mathbb{P}(D \geq k)=0$ for $k \in \mathbb{N}$, then there exists $n_0 \in \mathbb{N}$, such that for all $n \in \mathbb{N}_{\geq n_0}$, it holds that $\sum_{v \in V^{(n)}} \ind(d_v \geq k) =0$. 
\end{itemize}
\end{assumption}

Assumption (A4) is introduced for technical purposes in the proofs. We expect that this condition is not needed for the results to be true. This assumption assures that $D$ provides in some sense enough information on the highest degree vertices of a large but finite graph. The Assumption obviously holds if $D$ has unbounded support or if the degrees of vertices in $V^{(n)}$ are i.i.d.\ and distributed as $D$.
%We note that if the degrees of vertices are i.i.d.\ and distributed as the random variable $D$ with finite expectation, then the above assumptions are almost surely satisfied by the strong law of large numbers.

The ``size biased'' random variable $\tilde{D}$ is defined through $$\mathbb{P}(\tilde{D}=k)=\tilde{p}_k= \frac{k p_k}{\mathbb{E}[D]}.$$ 

Let $D^{(n)}$ be a random variable with the same distribution as the degree of a vertex chosen uniformly at random from the graph. That is 
$$\mathbb{P}(D^{(n)}=k)= n^{-1}\sum_{v \in V^{(n)}} \ind(d_v =k) \qquad \mbox{ for $k \in \mathbb{N}$.}$$ By (A1) and (A2), $D^{(n)}\darrow D$ and  $\mathbb{E}[D^{(n)}]\darrow \mathbb{E}[D]$.

Let  $\tilde{D}^{(n)}$ be the size biased variant of  $D^{(n)}$, i.e.\ 
$$\mathbb{P}(\tilde{D}^{(n)}=k)=\ \frac{k \mathbb{P}(D^{(n)}=k)}{\mathbb{E}[D^{(n)}]} = \frac{k \sum_{v \in V^{(n)}} \ind(d_v =k)}{\ell(n)} \qquad \mbox{for $k \in \mathbb{N}$}.$$ Note that $\tilde{D}^{(n)}$ is distributed as the degree of a vertex adjacent to a uniformly chosen edge from the graph.
By (A1) and (A2), $\tilde{D}^{(n)}\darrow \tilde{D}$, while
by (A3),
\begin{equation}\label{Dtildeconv}
\mathbb{E}[\tilde{D}^{(n)}] = \frac{\ell_2(n)}{\ell(n)}\darrow \frac{\mathbb{E}[D^2]}{\mathbb{E}[D]}= \mathbb{E}[\tilde{D}] \in (0,\infty].
\end{equation}

For the epidemic process on the graph, we merge  parallel edges  and ignore self-loops. Because $\mathbb{E}(D)<\infty,$ this assumption has no impact on the asymptotic degree distribution \cite[Thm.~1.6]{van2017} although the number of self-loops and parallel edges diverges if $Var(D)=\infty$ \cite[p.~219]{van2016random}.

\subsection{The $SIR$ epidemic}\label{sec:sir}
We consider an $SIR$ (Susceptible, Infectious, Recovered) epidemic on $G^{(n)}$.
We say that a vertex is susceptible, infectious or recovered if the individual it represents is in this ``infection state''.
Neighbors in the population contact each other according to independent homogeneous Poisson processes with rate  $\beta$, and if the contact is between a susceptible and an infectious vertex, then the susceptible one becomes immediately infectious itself. Infectious vertices stay so for a random period distributed as the random variable $L$, which is $[0,\infty]$-valued. All infectious periods and Poisson processes are independent of each other.
A contact by an infectious vertex is called an \textit{infectious contact}, whether or not the ``contactee'' is susceptible.
Throughout we assume that at time 0, there is one infectious individual chosen uniformly at random from the population and all other individuals are susceptible. 
It is straightforward to extend the model to other initial conditions.  

%Denote the  Laplace transform of $L$ by
%\begin{equation}\label{Laplace}
%\phi_L(\beta)=\int_{0}^{\infty} \rm{e}^{-\beta t}\mathit{L}(dt),
%\end{equation}
The probability, $\psi$ say, that an infected vertex makes an infectious contact with a given neighbor (and infects it if that neighbor is still susceptible) is given by
\begin{equation}\label{psidef}
\psi=\int_{0}^{\infty}\beta \rm{e}^{-\beta t}\mathbb{P}(\mathit{L}>t)dt = 1-\int_{0}^{\infty} \rm{e}^{-\beta t}\mathit{L}(dt),
\end{equation}
where we used partial integration and  the shorthand $L(dt)=\mathbb{P}(L \in dt)$.

We denote the sets of susceptible, infectious and recovered individuals at time $t$ by $S^{(n)}(t)$, $I^{(n)}(t)$ and $R^{(n)}(t)$ respectively.
We say that the epidemic goes strongly extinct or ends before time $t$ if $|I^{(n)}(t)|=0$. 
Lastly, we let $X^{(n)}(t)$ be the number of pairs of neighbours of which one is susceptible and the other infectious. We say that the epidemic is weakly extinct at time $t$ if $X^{(n)}(t)=0$.

Throughout we  use continuous time branching processes \cite[Ch.\ 6]{jagers1975branching} to approximate the epidemic process. We rely on theory for those processes for which there exists a number $\alpha$ (called Malthusian parameter, or real-time growth rate) which satisfies
\begin{equation}\label{alphadef}
\int^{\infty}_{0}e^{-\alpha t}\mu(dt)=1,
\end{equation}
where $\mu(s)= \int_0^s \mu(dt)$ is the expected number of births of children of a particle up to time $s$, i.e.\ $\{\mu(s);s\geq 0\}$ defines the mean offspring measure of the branching process. 
Below we define and justify a branching process approximation for the early stages of a $SIR$ epidemic. The approximating branching process has mean offspring measure 
\begin{equation}
\label{mudtfirst}
\mu'(dt) = \mathbb{E}[\tilde{D}-1] \beta e^{-\beta t} \mathbb{P}(L >t) dt. 
\end{equation}
Following the terminology from epidemiology, we define the basic reproduction number $R_0$ as the expected total number of children of a particle in the branching process:
\begin{equation}
\label{R0def}
R_0=\mu'(\infty) = \int_0^{\infty} \mu'(dt) = \psi \mathbb{E}[\tilde{D}-1].
\end{equation}
Here we used \eqref{psidef} for the last identity.
If $R_0 > 1$  the epidemic is supercritical and $\alpha$ exists and is strictly positive. If on the other hand $R_0<1$, the process is subcritical and $\alpha$ might exist and if it does, $\alpha$ is strictly negative.  If $R_0=1$ the epidemic is critical and the corresponding $\alpha$ trivially equals 0.

In epidemic literature $R_{0}$ is arguably the most studied quantity (e.g. \cite{diekmann2012mathematical}). It is usually defined as the average number of secondary infections caused by a typical infected individual in the early stages of an epidemic in a further  susceptible population. This definition is consistent with \eqref{R0def}.

\subsection{The main results}
\label{sec:main}
In this subsection we state the main results of the paper. The proofs will be provided in Sections \ref{secinit} and \ref{secext}. We consider an $SIR$ epidemic on the configuration model graph $G^{(n)}= (V^{(n)},E^{(n)})$  with degrees satisfying Assumptions \ref{mainassump}. The infectious periods are distributed as $L$, and neighbors contact each other according to independent Poisson processes with intensity $\beta$.  Throughout we condition on a major outbreak, which we denote by $\mathcal{M}^{(n)}$ and define as an outbreak in which more than $\log n$ vertices get infected, i.e.\  
\begin{equation}
\label{majoroutb} 
\mathcal{M}^{(n)}= \{|S^{(n)}(0)\setminus S^{(n)}(\infty)|>\log n\}. 
\end{equation}
In this definition the $\log n$ term can be replaced by any increasing function which goes to infinity but is $o(n)$. It can be proved that $|S^{(n)}(0)\setminus S^{(n)}(\infty)| = \Theta(n)$ on $\mathcal{M}^{(n)}$ w.h.p.\ This can be shown in a similar way as the corresponding result in \cite[Thm.\ 3.5]{ball2014epidemics}.

Define the time until strong extinction of an epidemic in a population of  size $n$ by
\begin{equation}
\label{Tstardef}
T^{*}(n) = \inf\{t \geq 0; |I^{(n)}(t)|=0\}.
\end{equation}
We also consider the time of weak extinction $T^{\dagger}(n)$, i.e.\ the time after which no further infections are possible, because there are no more infected vertices with susceptible neighbors. That is,
\begin{equation}
\label{Tdaggerdef}
T^{\dagger}(n) = \inf\{t \geq 0; X^{(n)}(t)=0\}.
\end{equation}
For $\mathbb{E}[\tilde{D}-1]< \infty$ and $R_0=\psi \mathbb{E}(\tilde{D}-1)>1$, define $\alpha'$ by 
\begin{equation}
\label{beginmalt}
\alpha'= \{x \in \mathbb{R}; 1= g'(x)\},
\end{equation}
where for $x \in \mathbb{R}$ 
\begin{equation}
\label{gprimedef}
 g'(x) = \int_0^{\infty} e^{-x t} \mu'(dt)
\end{equation}
and
$\mu'(dt)= \mathbb{E}[\tilde{D}-1] \beta e^{-\beta t} \mathbb{P}(L >t) dt$ as defined in \eqref{mudtfirst}.
Because $g'(x)$ is continuous for $x>0$, $\alpha'$ is well defined. 
If $\mathbb{E}[\tilde{D}-1]= \infty$, we set $\alpha'= \infty$.

Define 
\begin{equation}
\label{endmalt1}
\alpha^* = \inf\{x \in \mathbb{R} ; g^* (x) <1\},
\end{equation}
where for $x \in \mathbb{R}$
\begin{equation}
\label{gstardef}
 g^*(x) = \int_0^{\infty} e^{-x t} \mu^*(dt),
\end{equation}
\begin{equation}
\label{endmalt2}
\mu^*(dt) = \mathbb{E}\left[(\tilde{D}-1)(1-\psi + \psi \tilde{q}^*)^{\tilde{D}-2}\right]  \beta e^{-\beta t} \mathbb{P}(L>t) dt,
\end{equation}
and
\begin{equation}
\label{endext}
\tilde{q}^* = \min \left\{s \geq 0; s= \mathbb{E}\left[(1-\psi + \psi s)^{\tilde{D}-1}\right]\right\}.
\end{equation} 
From Claim \ref{dalem} below it follows that $\mathbb{E}\left[(\tilde{D}-1)(1-\psi + \psi \tilde{q}^*)^{\tilde{D}-2}\right] <\infty$ and thus that $\mu^*(dt)$ is well defined.

For further use, define
\begin{equation}
\label{R0stardef}
R_0^* = g^*(0)= \int_0^{\infty} \mu^*(dt) = \psi \mathbb{E}\left[(\tilde{D}-1)(1-\psi + \psi \tilde{q}^*)^{\tilde{D}-2}\right].
\end{equation}

By standard theory on supercritical branching processes \cite{jagers1975branching}, we obtain $\tilde{q}^* \in (0,1)$, because $\tilde{q}^*$ is the extinction probability of a supercritical branching process of which the number of children of a particle is Mixed Binomially distributed, with ``number of trials'' distribution $\tilde{D}-1$ and ``success probability'' $\psi$, and therefore with offspring mean $R_0>1$ \cite{ball2009threshold}. By Lemma \ref{subend} below the branching process defined through $\mu^*(dt)$ is subcritical.
If $\int_0^{\infty} e^{-\alpha^* t} \mu^*(dt) =1$ then $\alpha^*$ is a Malthusian parameter.
A sufficient (but far from necessary) condition for $\alpha^*$ to be a Malthusian parameter is that both 
$\mathbb{P}(L>t_0)=0$ for some $t_0>0$ and $\mathbb{E}[\tilde{D}-1] < \infty$ hold.

Before stating the main theorem, we provide the following lemma, the proof of which is provided in Section \ref{secext}.
\begin{lemma}\label{subend}
If $R_0>1$ then $R_0^* <1$ and  $\alpha^* <0$. 
\end{lemma}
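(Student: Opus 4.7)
The plan is to recognize $R_0^*$ as the derivative at the extinction probability of the size-biased forward branching process, and then invoke a convexity argument for probability generating functions.

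Define $h(s) = \mathbb{E}\bigl[(1-\psi+\psi s)^{\tilde{D}-1}\bigr]$. Conditional on $\tilde{D}$, a $\mathrm{Bin}(\tilde{D}-1,\psi)$ random variable has probability generating function $(1-\psi+\psi s)^{\tilde{D}-1}$, so $h$ is the PGF of an integer-valued offspring variable $Z$ with $\mathbb{E}[Z]=\psi\mathbb{E}[\tilde{D}-1]=R_0$; this is exactly the offspring law used to define $\tilde{q}^*$, which is the smallest fixed point of $h$ in $[0,1]$. Differentiating term-by-term (justified on $[0,1)$ since the summands are non-negative) gives $h'(s)=\psi\,\mathbb{E}\bigl[(\tilde{D}-1)(1-\psi+\psi s)^{\tilde{D}-2}\bigr]$, so by definition \eqref{R0stardef} we have $R_0^{*}=h'(\tilde{q}^{*})$. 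Note that $\tilde{q}^{*}<1$ follows from the supercriticality already noted in the paper.

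To show $R_0^{*}<1$, I would use that $h$ is strictly convex on $[0,1]$. Indeed $\mathbb{P}(Z\ge 2)>0$: otherwise $Z\in\{0,1\}$ a.s., which gives $\mathbb{E}[Z]\le 1$, contradicting $R_0>1$. Hence $h'$ is strictly increasing on $[0,1)$, and since $h(\tilde{q}^{*})=\tilde{q}^{*}$ and $h(1)=1$,
\begin{equation*}
1-\tilde{q}^{*} \;=\; h(1)-h(\tilde{q}^{*}) \;=\; \int_{\tilde{q}^{*}}^{1} h'(s)\,ds \;>\; h'(\tilde{q}^{*})\,(1-\tilde{q}^{*}).
\end{equation*}
Dividing by $1-\tilde{q}^{*}>0$ yields $R_0^{*}=h'(\tilde{q}^{*})<1$. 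This argument goes through even when $\mathbb{E}[\tilde{D}-1]=\infty$ (so $h'(1)=\infty$), because the integrand $h'(s)$ is finite on $[\tilde{q}^{*},1)$ wherever it is evaluated pointwise, and the strict inequality only requires strict monotonicity of $h'$ on a subinterval of $[\tilde{q}^{*},1)$, which strict convexity provides.

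For the second conclusion, rewrite $g^{*}(x)=\frac{R_0^{*}}{\psi}\int_{0}^{\infty} e^{-(\beta+x)t}\beta\,\mathbb{P}(L>t)\,dt$. Since $\mathbb{P}(L>t)\le 1$, the integrand is dominated by $\beta e^{-(\beta+x)t}$, which is integrable for every $x>-\beta$. By dominated convergence, $g^{*}$ is finite, continuous and strictly decreasing on $(-\beta,\infty)$. Because $g^{*}(0)=R_0^{*}<1$, there exists $\varepsilon\in(0,\beta)$ with $g^{*}(-\varepsilon)<1$, so by definition \eqref{endmalt1} we obtain $\alpha^{*}\le -\varepsilon<0$.

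The only step that needs a bit of care is justifying strict convexity (hence the strict inequality) when $\tilde{D}$ has infinite second moment; once one notes that $\mathbb{P}(Z\ge 2)>0$ is forced by $R_0>1$, everything else is essentially a standard Galton–Watson PGF argument together with routine finiteness of the Laplace transform of a bounded density.
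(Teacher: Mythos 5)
Your proposal is correct and follows essentially the same route as the paper: both identify $R_0^*$ as the derivative of the offspring generating function $h(s)=\mathbb{E}[(1-\psi+\psi s)^{\tilde D-1}]$ at its smallest fixed point $\tilde q^*<1$, use strict convexity to conclude $h'(\tilde q^*)<1$, and then deduce $\alpha^*<0$ from the finiteness, continuity and monotonicity of $g^*$ on a neighbourhood of $0$ (the paper obtains the bound $g^*(x)<\infty$ for $x>-\beta$ in Remark \ref{Remadag}, which you rederive inline). The only cosmetic difference is that you convert strict convexity into $R_0^*<1$ via the integral form $h(1)-h(\tilde q^*)>h'(\tilde q^*)(1-\tilde q^*)$, whereas the paper argues that the convex function $g(x)-x$ is negative between its two zeros; both are the same standard Galton--Watson argument.
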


The first main theorem then reads. 
\begin{theorem}\label{mainthm}
Assume that Assumptions \ref{mainassump} hold and $R_0>1$.
%Let $\alpha^*$ be as in \eqref{endmalt1}.
Then we have for all $\epsilon >0$ that  $$\mathbb{P}\left(\left|\frac{T^{*}(n)}{\log n} - \left(\frac{1}{\alpha'}+\frac{1}{|\alpha^{*}|}\right)\right|>\epsilon \mid \mathcal{M}^{(n)}\right) \to 0,$$ 
if and only if
\begin{equation}
 \label{THMass}
\int_0^{\infty} e^{(|\alpha^*|-\eta) t} L(dt) < \infty \qquad \mbox{for all $\eta \in (0,|\alpha^*|)$.} 
\end{equation}
\end{theorem}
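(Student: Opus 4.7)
The plan is to split $T^*(n)$ into an \emph{initial phase}, ending at a time $\tau^{(n)}$ at which a small positive fraction of the eventually infected vertices have been infected, and a \emph{final phase} from $\tau^{(n)}$ until strong extinction. Conditional on $\mathcal{M}^{(n)}$, I would show that $\tau^{(n)} = \log n/\alpha' + o_{\mathbb{P}}(\log n)$ and $T^*(n) - \tau^{(n)} = \log n/|\alpha^*| + o_{\mathbb{P}}(\log n)$, with condition \eqref{THMass} being equivalent to the second statement.

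For the initial phase, I would couple the early epidemic to the supercritical Crump--Mode--Jagers branching process with mean offspring measure $\mu'(dt)$ in \eqref{mudtfirst}: each infectious vertex emits infectious contacts at rate $\beta$ along each of its $\tilde{D}-1$ onward half-edges until it recovers. Via the half-edge exploration of the configuration model, this coupling is exact until $O(n^{1/2})$ vertices have been infected. By classical branching process theory \cite{jagers1975branching}, on the event of non-extinction (which in the limit coincides with $\mathcal{M}^{(n)}$), the process size grows as $W\,e^{\alpha' t}$ with $W>0$ a.s., so that the hitting time of level $n^{\gamma}$ for any fixed $\gamma < 1/2$ is $\gamma \log n/\alpha' + O_{\mathbb{P}}(1)$; the continuation to $\tau^{(n)}$ can be handled either by the deterministic ODE limit of \cite{volz2008sir,Decr12} -- which adds $(1-\gamma)\log n/\alpha' + O(1)$ because the ODE escapes the unstable disease-free equilibrium at exponential rate $\alpha'$ -- or directly via the typical-vertex infection time results of \cite{barbour2013approximating}. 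The case $\mathbb{E}[\tilde{D}-1] = \infty$ (so $\alpha' = \infty$ by convention) requires a separate short-time explosion argument showing $\tau^{(n)} = o_{\mathbb{P}}(\log n)$.

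For the final phase the analysis proceeds from each late-infected vertex forward, with the remaining epidemic approximated by a \emph{subcritical} CMJ process with mean offspring measure $\mu^*(dt)$ from \eqref{endmalt2}. Heuristically, by time $\tau^{(n)}$ most half-edges already lead to infected vertices, so for a newly infected vertex only its $\tilde{D}-1$ onward half-edges can contribute, and among the other $\tilde{D}-2$ half-edges of a size-biased neighbour the probability of initiating a fresh surviving cascade is reduced to $\psi(1-\tilde{q}^*)$, yielding the factor $(1-\psi+\psi \tilde{q}^*)^{\tilde{D}-2}$ in $\mu^*$. Lemma \ref{subend} ensures subcriticality with $\alpha^*<0$. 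To make this rigorous I would construct (i) a stochastically dominating subcritical CMJ process coupled to the $\Theta(n)$ still-infective lineages, giving an upper bound $\log n/|\alpha^*| + O_{\mathbb{P}}(1)$ on $T^*(n)-\tau^{(n)}$, and (ii) a matching lower-bound process following only a well-chosen subset of late-infected lineages. The main obstacle is the equivalence with \eqref{THMass}: when it holds, the extinction time of a subcritical CMJ process starting from $\Theta(n)$ particles is $\log n/|\alpha^*|+O_{\mathbb{P}}(1)$ by standard theory, while when it fails the maximum of $\Theta(n)$ i.i.d.\ copies of $L$ exceeds $(1+\delta)\log n/|\alpha^*|$ with probability bounded away from zero, so a single long infectious period keeps $T^*(n)$ above this value, establishing both necessity and sufficiency.
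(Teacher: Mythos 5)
Your proposal is correct and follows essentially the same route as the paper: the same two-phase decomposition at a time $T'_\gamma(n)$ when a positive fraction is infected, the initial phase handled via the supercritical process with mean measure $\mu'$ together with the results of Barbour and Reinert (the paper adds a truncation argument to relax their degree and infectious-period assumptions), and the final phase sandwiched between subcritical processes built from $\mu^*$, with necessity of \eqref{THMass} obtained exactly as you describe from the maximum of $\Theta(n)$ i.i.d.\ infectious periods among late-infected vertices.
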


Using this main theorem, we obtain in a straightforward fashion a result for the time until weak extinction as well (see the proof in Section \ref{Secproofmainthmsec}) 
\begin{theorem}\label{mainthmsec}
Assume that Assumptions \ref{mainassump} hold and $R_0>1$.
%Let $\alpha^*$ be as in \eqref{endmalt1}.
Then we have for all $\epsilon >0$ that  
$$\mathbb{P}\left(\left|\frac{T^{\dagger}(n)}{\log n} - \left(\frac{1}{\alpha'}+\frac{1}{|\alpha^{*}|}\right)\right|>\epsilon \mid \mathcal{M}^{(n)}\right) \to 0.$$ 
\end{theorem}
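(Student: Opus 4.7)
The plan is to prove the lower and upper bounds separately, inheriting the lower bound from the proof of Theorem~\ref{mainthm} and establishing the upper bound via a truncation argument that bypasses condition~\eqref{THMass}.

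For the lower bound $T^\dagger(n)/\log n \geq 1/\alpha' + 1/|\alpha^*| - \epsilon$ w.h.p.\ on $\mathcal{M}^{(n)}$, I would appeal directly to the lower bound in the proof of Theorem~\ref{mainthm}. That proof produces, with high probability, a vertex whose infection time is at least $(1/\alpha'+1/|\alpha^*|-\epsilon)\log n$, and does not invoke~\eqref{THMass} (that condition is used in Theorem~\ref{mainthm} only to control residual infectious periods in the upper bound). Since each infection event is a point at which $X^{(n)}>0$, we have $T^\dagger(n)\geq$ (last infection time), so the lower bound follows.

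For the upper bound, I would fix $M>0$ and consider the truncated epidemic on the same graph $G^{(n)}$, with the same Poisson contact processes, but with infectious periods $L_v\wedge M$. The truncated infectious period satisfies~\eqref{THMass} trivially. Let $\alpha'_M,\alpha^*_M,T^*_M(n)$ denote the associated quantities for the truncated epidemic. By dominated convergence in the definitions \eqref{beginmalt}--\eqref{endext}, $\alpha'_M\to\alpha'$ and $\alpha^*_M\to\alpha^*$ as $M\to\infty$, and since $R_0>1$ the truncated epidemic remains supercritical for $M$ large; a standard coupling shows $\mathbb{P}(\mathcal{M}^{(n)}_M\mid \mathcal{M}^{(n)})\to 1$ as $n\to\infty$ and then $M\to\infty$. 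Theorem~\ref{mainthm} applied to the truncated epidemic gives $T^*_M(n)/\log n\leq 1/\alpha'+1/|\alpha^*|+\epsilon/2$ w.h.p.\ for $M$ sufficiently large. To transfer this to an upper bound on $T^\dagger(n)$ for the original epidemic, I would observe that the only vertices capable of being infectious in the original beyond $T^*_M(n)$ are those with $L_v>M$, and bound by a first-moment computation the expected number of susceptible-infectious pairs in the original at time $T^*_M(n)+\eta\log n$; this expected number tends to $0$ as first $n\to\infty$ and then $M\to\infty$, since the number of ``long-infectious'' vertices is $O(n\mathbb{P}(L>M))$ and each persistent S-I pair survives an additional period $\eta\log n$ with probability at most $e^{-\beta\eta\log n}=n^{-\beta\eta}$.

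The main obstacle is the direction of the coupling: truncating $L$ produces strictly fewer infections in the modified epidemic, so one cannot directly dominate the original's weak extinction time by the truncated's strong extinction time. The first-moment computation on residual S-I pairs is thus the technical heart of the proof, and is most delicate in the infinite-variance regime $\mathbb{E}[\tilde{D}]=\infty$, where the size-biased degree distribution places mass on arbitrarily high-degree vertices; handling this case probably requires a preliminary truncation of the degree sequence together with the infectious-period truncation.
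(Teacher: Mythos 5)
Your lower bound is sound and is essentially what the paper does: Lemma \ref{endlemdown} is proved by dominating $\{|J_v^{(n)}(t)|\}$ from below with subcritical branching processes whose particles have \emph{bounded} lifetimes $\min(L,1/\epsilon)$, so a particle alive at time $t$ certifies an infection event in $(t-1/\epsilon,t]$, and every infection event is a time at which $X^{(n)}>0$; condition \eqref{THMass} is never used there.

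The upper bound, however, has a genuine gap. First, the claim that ``the only vertices capable of being infectious in the original beyond $T^*_M(n)$ are those with $L_v>M$'' is false: a vertex with $L_v>M$ remains infectious after the truncated epidemic has died and can infect \emph{new} vertices (with arbitrary infectious periods), which in turn infect others, so the residual process is itself a full epidemic seeded by the $\Theta(n\,\mathbb{P}(L>M))=\Theta(n)$ long-infectious vertices, and for fixed $M$ its duration is again of order $\log n$ --- exactly the quantity you are trying to control. Second, your first-moment computation only bounds the survival of S--I pairs \emph{present at} time $T^*_M(n)$; it ignores the pairs created afterwards by these cascading infections (indeed, the very event that destroys a pair by contact creates a new infectious vertex and hence new pairs). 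So the truncation-at-$M$ route does not close. The paper avoids all of this with a different and much cleaner device: it replaces $L_v$ by $L'_v=\min(L_v,\max_{1\le j\le d_v}\tau_{v,j})$ (see \eqref{Lprime}). This changes nothing about who infects whom or when, because after $v$ has contacted all its neighbours it can cause no further infections; and after time $L'_v$ the vertex $v$ can no longer be the infectious member of an S--I pair, so $T^{\dagger}(n)$ is bounded above by the strong extinction time of the $L'$-epidemic. The tail of the effective infectious period then picks up an extra factor, $\mathbb{P}(L'>t)\le \mathbb{E}[\tilde D^{(n),+}(\epsilon)]\,e^{-\beta t}\,\mathbb{P}(L>t)$ (with the expectation finite by Claim \ref{dalem}, which also disposes of your worry about the infinite-variance regime), and $\int_0^{\infty}e^{(|\alpha^*|-\eta)t}e^{-\beta t}\mathbb{P}(L>t)\,dt<\infty$ holds \emph{automatically} because the mean offspring measure $\mu^*(dt)$ already contains the factor $\beta e^{-\beta t}\mathbb{P}(L>t)$ and $g^*(\alpha^*+\eta)<\infty$. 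Thus condition \eqref{equivalence} is satisfied for $L'$ with no assumption on $L$, and Lemma \ref{endlemup} applies verbatim. The missing idea in your proposal is precisely this self-truncation by the contact times, which makes the modification exact rather than a lossy coupling.
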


\begin{remark}
In Theorems \ref{mainthm} and \ref{mainthmsec} the summand  $\frac{1}{\alpha'}$ is related to the duration of the early stage, i.e.\ the exponentially growing phase, of the epidemic, while the summand  $\frac{1}{|\alpha^*|}$ is related to the duration of the final phase, i.e.\ the exponentially declining phase, of the epidemic. 

\end{remark}

\begin{remark}
\label{REMass}
Observe that for $x>0$,
\begin{equation*}
\begin{split}
\int_0^{\infty} e^{x t} L(dt)&  =  \int_0^{\infty} (e^{x t}-1) L(dt) + \int_0^{\infty}  L(dt)\\  
\ & =  x \int_0^{\infty} \left(\int_0^t e^{x s} ds\right) L(dt) + 1 \\
\ & =   x  \int_0^{\infty} \left(\int_s^{\infty} L(dt)\right) e^{x s} ds + 1 \\
\ & =   x  \int_0^{\infty} \mathbb{P}(L >s) e^{x s} ds + 1.
\end{split}
\end{equation*}
So, condition \eqref{THMass} is equivalent to 
\begin{equation}
\label{equivalence}
%\int_0^{\infty} e^{(|\alpha^*|-\eta)t}L(dt) < \infty \Leftrightarrow
\int_0^{\infty} e^{(|\alpha^*|-\eta) t} \mathbb{P}(L>t) dt < \infty \qquad \mbox{for all  $\eta \in (0,|\alpha^*|)$}.
\end{equation}
This condition  guarantees that w.h.p.\ none of the individuals infected during the epidemic will stay infectious for a time longer than $\log[n]/|\alpha^*|$. Condition \eqref{equivalence} gives that Theorem \ref{mainthm} provides the (scaled) duration of the epidemic if $\mathbb{P}(L>t)$ decays faster than exponential, but  not if $\mathbb{P}(L>t)$ decays slower than exponential or exponentially with too low a rate. 
\end{remark}

\begin{remark}
\label{Remadag}
In order to understand the definition of $\alpha^*$ in \eqref{endmalt1} it is good to study
$g^*(x)$. 
This function is strictly decreasing, and we may define
\begin{equation}
\label{alphadagger}
\alpha^{\dagger} = \inf\{x \in \mathbb{R}; g^*(x)<\infty\}.
\end{equation}
By definition $\alpha^{\dagger} \in [-\infty, \alpha^*]$.
Recalling the definition of $\mu^*(\cdot)$, we see that for all $x > -\beta$
\begin{multline*}
g^*(x) = \mathbb{E}\left[(\tilde{D}-1)(1-\psi + \psi \tilde{q}^*)^{\tilde{D}-2}\right]\int_0^{\infty} \beta e^{-(x+\beta) t} \mathbb{P}(L>t) dt\\ \leq \mathbb{E}\left[(\tilde{D}-1)(1-\psi + \psi \tilde{q}^*)^{\tilde{D}-2}\right]\int_0^{\infty} \beta e^{-(x+\beta) t} dt = \frac{R_0^*}{\psi} \frac{\beta}{x+\beta} ,
\end{multline*}
which is finite by $R_0^*<1$. So, we obtain $\alpha^{\dagger}\leq -\beta <0$. 

It is straightforward to see that $g^*(x)$ is continuous for $x > \alpha^{\dagger}$
and that if $\alpha^{\dagger}>-\infty$ then $\displaystyle\lim_{x \searrow \alpha^{\dagger}} g^*(x) = g^*(\alpha^{\dagger})$,
where $g^*(\alpha^{\dagger})$ may be infinite. Furthermore, $g^*(0) =R_0^* <1$. Together this has the following implications.
\begin{itemize}
 \item If $\alpha^{\dagger} = -\infty$ then $\displaystyle\lim_{x \to -\infty} g^*(x) = \infty$.
This implies that $\alpha^* \in(-\infty,0)$, while $g^*(\alpha^*) =1$ and $g^*(x) \in (1,\infty)$ for $x \in (-\infty,\alpha^*)$.
\item If $\alpha^{\dagger}>-\infty$ and $g^*(\alpha^{\dagger})> 1$, then by the same arguments as for  $\alpha^{\dagger} = -\infty$  we obtain $\alpha^* \in(\alpha^{\dagger},0)$, while also $g^*(\alpha^*)=1$ and $g^*(x) \in (1,\infty)$ for $x \in (\alpha^{\dagger},\alpha^*)$.
\item If $\alpha^{\dagger}>-\infty$ and $g^*(\alpha^{\dagger}) <1$, then there is no solution of $g^*(x)=1$ and the branching process with mean offspring measure $\mu^*(\cdot)$ does not have a Malthusian parameter, although $\alpha^*$ is well defined and equal to $\alpha^{\dagger}$. Furthermore, $g^*(x) = \infty$ for $x<\alpha^*$.
\item If $\alpha^{\dagger}>-\infty$ and $g^*(\alpha^{\dagger}) = 1$, then clearly $\alpha^* = \alpha^{\dagger}$, but $g^*(x) = \infty$ for $x<\alpha^*$.
\end{itemize}
\end{remark}

\begin{remark}
Intuition from first passage percolation (e.g.\ \cite{Bham14,Bham17}) and research on the epidemic curve \cite{janson14,barbour2013approximating} suggests that (possibly with some extra conditions on the distributions of the infectious period and degrees) $$T^{*}(n)- \left(\frac{1}{\alpha'}+\frac{1}{|\alpha^{*}|} \right) \log n$$ might converge in distribution to an a.s.\ finite random variable. We did not try to prove this or identify which extra conditions would be necessary for such a proof.  
\end{remark}

In order to prove  Theorem \ref{mainthm} we use some lemmas.
Let 
\begin{equation}
\label{qstar}
q^* = \mathbb{E}[(1-\psi + \psi \tilde{q}^*)^{D}],
\end{equation}
where $\tilde{q}^*$ is defined through \eqref{endext}. Copying the steps of the corresponding result for random intersection graphs as provided in \cite[Thm.~3.4]{ball2014epidemics}, we  obtain 
\begin{lemma}\label{lemint1}
Assume that Assumptions \ref{mainassump} hold.
Then $$\mathbb{P}(|n^{-1} |S^{(n)} (\infty)| - q^*|>\epsilon\mid \mathcal{M}^{(n)}) \to 0. \qquad \mbox{for all $\epsilon>0$.}$$
\end{lemma}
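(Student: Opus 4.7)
The plan is to reduce the claim to a statement about the giant component of a suitably percolated configuration model, paralleling the argument of \cite[Thm.~3.4]{ball2014epidemics}. First I would invoke the classical coupling between an $SIR$ epidemic and bond percolation: conditional on $G^{(n)}$, for each edge $\{u,v\}\in E^{(n)}$ the event that $u$ would infect $v$ if $u$ were ever infected is Bernoulli with success probability $\psi$ (by \eqref{psidef} together with the independence of infectious periods and of the contact Poisson processes), and these events are independent across edges. Retaining precisely the edges on which transmission would succeed defines a random subgraph $\tilde{G}^{(n)}\subseteq G^{(n)}$; the set of vertices ever infected is exactly the connected component $\mathcal{C}^{(n)}$ of the initial seed in $\tilde{G}^{(n)}$, so that
\begin{equation*}
n^{-1}|S^{(n)}(\infty)| = 1 - n^{-1}|\mathcal{C}^{(n)}|.
\end{equation*}

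Second, $\tilde{G}^{(n)}$ is itself of configuration-model type: each vertex $v$ keeps each of its $d_v$ half-edges independently with probability $\psi$, and the retained half-edges are uniformly paired. Its empirical degree distribution converges to that of a $\operatorname{Bin}(D,\psi)$ random variable, so Assumptions \ref{mainassump} transfer to the percolated graph. The heuristic Galton--Watson exploration of $\tilde{G}^{(n)}$ has offspring law $\operatorname{Bin}(\tilde{D}-1,\psi)$ at size-biased vertices and $\operatorname{Bin}(D,\psi)$ at the root, and its extinction probabilities are exactly $\tilde{q}^*$ as defined in \eqref{endext} and $q^*=\mathbb{E}[(1-\psi+\psi \tilde{q}^*)^{D}]$ as defined in \eqref{qstar}. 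Standard giant-component results for percolated configuration models---as used for random intersection graphs in \cite[Thm.~3.4]{ball2014epidemics} and for configuration models in \cite{ball2009threshold}---then yield $n^{-1}|\mathcal{C}_{\max}^{(n)}|\parrow 1-q^*$, where $\mathcal{C}_{\max}^{(n)}$ denotes the largest component of $\tilde{G}^{(n)}$.

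Third, I would verify that on $\mathcal{M}^{(n)}$ the seed lies in $\mathcal{C}_{\max}^{(n)}$ with high probability. By definition $|\mathcal{C}^{(n)}|>\log n$ on $\mathcal{M}^{(n)}$, while a standard branching-process coupling of the early exploration shows that w.h.p.\ every non-giant component of $\tilde{G}^{(n)}$ has size at most some constant multiple of $\log n$, so $\mathcal{C}^{(n)}=\mathcal{C}_{\max}^{(n)}$ w.h.p.\ on $\mathcal{M}^{(n)}$. Combined with the previous step this gives $n^{-1}|S^{(n)}(\infty)|\parrow q^*$ conditional on $\mathcal{M}^{(n)}$, as required.

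The main obstacle, and what forces us to imitate the proof of \cite[Thm.~3.4]{ball2014epidemics} rather than cite an off-the-shelf statement, is that Assumptions \ref{mainassump} permit $\mathbb{E}[D^2]=\infty$, in which case the size-biased offspring law $\operatorname{Bin}(\tilde{D}-1,\psi)$ has infinite mean and the classical Molloy--Reed-style variance computations break down. The standard workaround is a truncation of the degree distribution at a slowly growing threshold $K_n\to\infty$: on the truncated graph the second-moment machinery applies, giving a giant component of the right size, and a separate bound (using the small number of high-degree vertices) shows that re-introducing the truncated half-edges merges only a negligible set of extra vertices into the giant. Once this truncation argument is in place, both the giant-component convergence for $\tilde{G}^{(n)}$ and the identification of $\mathcal{M}^{(n)}$ with the event ``seed in the giant component of $\tilde{G}^{(n)}$'' follow.
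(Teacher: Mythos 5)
Your reduction rests on the claim that, for each edge $\{u,v\}$, the indicator of the event ``$u$ would infect $v$ if $u$ were ever infected'' is Bernoulli$(\psi)$ \emph{independently across edges}. The marginal is indeed $\psi$ by \eqref{psidef}, but independence fails whenever $L$ is non-degenerate: the indicators for the edges $\{u,v\}$ and $\{u,w\}$ both depend on the same infectious period $L_u$, and $\mathbb{P}(u\to v,\, u\to w)=\mathbb{E}[(1-e^{-\beta L})^2]\neq\psi^2$ in general. Consequently the set of ultimately infected vertices is \emph{not} distributed as the component of the seed in an undirected configuration model percolated with i.i.d.\ Bernoulli$(\psi)$ edges; it is the out-component of the seed in a directed structure whose out-edges from a common vertex are positively correlated. (This is exactly why the probability of a major outbreak is governed by the mixed-binomial offspring law $\mathrm{Bin}(\tilde D-1,\,1-e^{-\beta L})$ rather than $\mathrm{Bin}(\tilde D-1,\psi)$.) The final-size constant $q^*$ nevertheless comes out right, but for a reason your argument does not supply: a vertex $v$ is ultimately susceptible iff none of its neighbours would ever pass the infection on to it, and \emph{these} indicators --- one per in-neighbour --- are genuinely independent Bernoulli$(\psi)$ because they involve the infectious periods of distinct vertices. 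This is the susceptibility-set argument behind \cite[Thm.~3.4]{ball2014epidemics}, which is what the paper invokes (and which Section \ref{sec:heur} sketches; note the remark there that whether a vertex is ultimately recovered does not depend on its own infectious period). To repair your proof you must either replace the undirected percolation coupling by the directed one and work with in-components/susceptibility sets, or restrict to constant $L$.

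A secondary point: your third step asserts that w.h.p.\ every non-giant component of $\tilde G^{(n)}$ has size $O(\log n)$. Under Assumptions \ref{mainassump} the degrees may have infinite variance, and then the dual (subcritical) branching process can have heavy-tailed total progeny, so the second-largest component need not be logarithmic. What you actually need is the weaker statement that the exploration started from the uniformly chosen seed w.h.p.\ either dies before reaching $\log n$ vertices or reaches $\Theta(n)$ vertices; this follows from a two-sided branching-process coupling of the early exploration (cf.\ the reference to \cite[Thm.~3.5]{ball2014epidemics} after \eqref{majoroutb}) and does not require simultaneous control of all small components.
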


In order to formulate the main lemmas, define for $\gamma \in (0,1-q^*)$ 
\begin{equation}\label{Tprime}
T'_{\gamma}(n) = \inf\{t>0; n^{-1} |S^{(n)} (t)| < 1-\gamma\}.
\end{equation}
Theorem \ref{mainthm} now follows trivially from  the following lemmas, where the first is about the duration of the initial phase of the epidemic and the second about the duration of the final phase.
\begin{lemma}\label{lemint2}
Assume that (A1)-(A3) of Assumptions \ref{mainassump} hold.
Then
$$\mathbb{P}\left(\left|\frac{T'_{\gamma}(n)}{\log n} - \frac{1}{\alpha'}\right|>\epsilon\mid \mathcal{M}^{(n)}\right) \to 0 \qquad \mbox{for all $\epsilon>0$  and all $\gamma \in (0,1-q^*)$.}$$
\end{lemma}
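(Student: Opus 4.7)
The plan is to split the hitting time $T'_\gamma(n)$ into an \emph{early phase}, during which the number of ever-infected individuals is microscopic and is well approximated by a supercritical Crump--Mode--Jagers (CMJ) branching process with mean reproduction measure $\mu'$ from \eqref{mudtfirst}, and a \emph{bulk phase}, during which a law-of-large-numbers / fluid-limit approximation is available. Each phase is expected to contribute the correct amount to $(\log n)/\alpha'$, adding up to the desired total.

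For the early phase I would build the random graph dynamically together with the epidemic, revealing the unused half-edges of a vertex only at the moment it is first infected and pairing them uniformly at random with the still-available half-edges. Up to the first time $\tau_n$ at which a slowly diverging $\omega_n = o(\sqrt n)$ individuals have been infected, the probability that any newly revealed half-edge pairs with a previously exposed one is $O(\omega_n^2 / n) = o(1)$, so the exploration can be coupled with high probability to a CMJ branching process whose mean offspring measure converges to $\mu'$. By Nerman's theorem (see \cite{jagers1975branching}) on supercritical CMJ processes, on the event of non-extinction the size $Z(t)$ at time $t$ satisfies $Z(t)e^{-\alpha' t}\to W$ almost surely with $W>0$, so $\tau_n = (\log \omega_n)/\alpha' + O_{\mathbb P}(1)$ conditionally on non-extinction. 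Standard arguments (cf.\ \cite[Thm.~3.4]{ball2014epidemics}) show that $\mathcal{M}^{(n)}$ and non-extinction of the approximating BP are asymptotically equivalent, so the above also holds conditionally on $\mathcal M^{(n)}$. The case $\mathbb E[\tilde D - 1] = \infty$, hence $\alpha' = \infty$, requires the explosion theory for CMJ processes with infinite-mean offspring that is used in first passage percolation on heavy-tailed configuration models \cite{Bham14,Bham17,Amin15}; this gives $\tau_n = o_{\mathbb P}(\log n)$ on $\mathcal M^{(n)}$ for any $\omega_n \to \infty$.

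Starting from $\omega_n$ ever-infected vertices at time $\tau_n$, I would deduce the rest of the argument from the deterministic fluid limits of Volz \cite{volz2008sir}, rigorously justified for general $L$ by Decreusefond et al.\ \cite{Decr12} (and by Janson et al.\ \cite{janson14} in the Markov case). These results state that, after the random time shift needed to push the process away from the disease-free equilibrium, the rescaled trajectory $n^{-1}|S^{(n)}(\cdot)|$ converges on compact intervals to the solution of Volz's ODE. Since this ODE grows away from the disease-free equilibrium with exponential rate exactly $\alpha'$, the deterministic time needed to pass from $1 - \omega_n/n$ to any level $1-\gamma \in (q^*, 1)$ is $(\log n - \log \omega_n)/\alpha' + O(1)$. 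Adding this to the early phase yields $T'_\gamma(n) = (\log n)/\alpha' + o_{\mathbb P}(\log n)$ conditionally on $\mathcal M^{(n)}$, which is the claim.

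The main obstacle is bridging the two regimes, since the CMJ coupling is sharp only up to size $o(\sqrt n)$ whereas the fluid limit is sharp only from size $\Theta(n)$ onwards. I would close the gap by sandwiching the epidemic between two branching processes on the whole interval $[\tau_n, T'_\gamma(n)]$: a dominating one that ignores depletion of susceptibles, and a thinned one that deletes each birth independently with a probability accounting for the chance of pairing a half-edge back into the already-explored cluster of size at most $\gamma n$. For fixed $\gamma$ both auxiliary processes have Malthusian parameter $\alpha'\bigl(1 + o(1)\bigr)$, so they pin $T'_\gamma(n)$ down to within $o_{\mathbb P}(\log n)$. The case $\alpha' = \infty$ requires a separate, more delicate argument based on the first-passage percolation estimates of \cite{Bham14,Bham17,Amin15}, and this is where I expect most of the technical effort to concentrate.
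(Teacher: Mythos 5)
Your overall decomposition (supercritical CMJ approximation of the early phase plus control of the passage from microscopic to macroscopic prevalence) is the right intuition, and your lower bound on $T'_{\gamma}(n)$ via a dominating branching process that ignores depletion is essentially the paper's Lemma \ref{lemint5}. However, there are two genuine gaps on the upper-bound side. First, the claim that the thinned branching process (births deleted with the probability of pairing a half-edge back into an explored cluster of size up to $\gamma n$) has Malthusian parameter $\alpha'(1+o(1))$ for \emph{fixed} $\gamma$ is false: once a positive fraction of the half-edges has been consumed, the thinning probability is bounded away from $0$ uniformly in $n$, so the lower process has Malthusian parameter $\alpha'_{\gamma}$ strictly smaller than $\alpha'$, and the sandwich only yields $T'_{\gamma}(n)\leq (1+o(1))\log n/\alpha'_{\gamma}$, which does not match the lower bound. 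Letting $\gamma \downarrow 0$ recovers the correct constant only for small $\gamma$, whereas the lemma is asserted for all $\gamma \in (0,1-q^*)$; you would still have to prove that the intermediate phase (from prevalence $\epsilon$ to prevalence close to $1-q^*$) lasts $o(\log n)$, and your sandwich does not give that. Second, the tool you invoke for exactly that step, the fluid limits of Volz and Decreusefond et al.\ (or Janson et al.), is proved under moment conditions on $D$, respectively under exponential $L$, that are strictly stronger than (A1)--(A3); when $\mathbb{E}[D^2]=\infty$ those results are not available, and the case $\alpha'=\infty$ would then rest entirely on unproved explosion/first-passage estimates that you explicitly defer.

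For comparison, the paper's proof avoids both issues. It first establishes the upper bound under bounded degrees and bounded $L$, where Theorem 3.3 of Barbour and Reinert controls the whole trajectory of $n^{-1}|S^{(n)}(\cdot)|$ down to any level $q^*+\gamma'$, so the intermediate phase comes for free; it then removes the boundedness assumptions by a monotone truncation (deleting all edges incident to vertices of degree exceeding $K$ and capping $L$ at $L_{\text{max}}$ only slows the epidemic, while the Malthusian parameter of the truncated model converges to $\alpha'$, or to $\infty$, as $K, L_{\text{max}} \to \infty$). This truncation also disposes of the $\alpha'=\infty$ case without any explosion theory. If you want to keep your architecture, you would need either to import the Barbour--Reinert estimate for the intermediate phase or to supply an independent proof that the number of infectious--susceptible pairs stays $\Theta(n)$ throughout that phase, so that it has duration $O(1)$.
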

\begin{lemma}\label{lemint3}
Assume that Assumptions \ref{mainassump} hold.
If and only if \eqref{THMass} holds, we have that there exists $\gamma \in (0,1-q^*)$, such that
$$\mathbb{P}\left(\left|\frac{T^{*}(n) - T'_{\gamma}(n)}{\log n}  - \frac{1}{|\alpha^*|}\right|>\epsilon\mid \mathcal{M}^{(n)} \right) \to 0 \qquad \mbox{for all $\epsilon>0$.}$$
\end{lemma}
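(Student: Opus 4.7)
\emph{Overall strategy.} The plan is to couple the epidemic process after time $T'_{\gamma}(n)$ (for an appropriately chosen $\gamma\in(0,1-q^*)$) to a collection of subcritical continuous-time branching processes with mean offspring measure $\mu^*(dt)$, and then invoke extinction-time asymptotics for subcritical branching processes. By Lemma \ref{subend} the process defined by $\mu^*$ has Malthusian parameter $\alpha^*<0$, and Lemma \ref{lemint1} gives $|S^{(n)}(0)\setminus S^{(n)}(\infty)|=(1-q^*)n+o_p(n)$, so for $\gamma$ close enough to $1-q^*$ there are $\Theta(n)$ infected vertices at time $T'_{\gamma}(n)$ seeding the final phase. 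The upper bound $(T^*(n)-T'_{\gamma}(n))/\log n\leq 1/|\alpha^*|+\epsilon$ with high probability then follows by showing that each of these lineages dies before time $(1/|\alpha^*|+\epsilon)\log n$, while the matching lower bound requires that some lineage survives to time $(1/|\alpha^*|-\epsilon)\log n$.

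\emph{The coupling and the two bounds.} To set up the coupling, explore the configuration model forward from each infectious vertex at $T'_{\gamma}(n)$, revealing half-edges only as needed. A half-edge attached to a newly infected vertex leads, up to $o(1)$ corrections, to a vertex of size-biased degree $\tilde D$, contributing $\tilde D-1$ further half-edges; each of those, because of the advanced depletion of susceptibles, leads with asymptotic probability $1-\psi+\psi\tilde q^*$ either to a half-edge that fails to transmit or onto a subtree that would already have died out in the initial supercritical branching process. Hence the effective forward offspring measure in the residual epidemic is $\mu^*(dt)$ up to vanishing perturbations. Standard subcritical branching-process asymptotics (see \cite[Ch.~6]{jagers1975branching}) then give that extinction from $\Theta(n)$ initial particles occurs at time $\log n/|\alpha^*|+O_p(1)$, provided no individual lifetime dominates the timescale --- which is exactly what \eqref{THMass} guarantees via the equivalent tail statement in Remark \ref{REMass}. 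For the lower bound one uses the survival asymptotic $\mathbb{P}(Z(t)>0)\sim c\,e^{\alpha^* t}$ for a subcritical Crump-Mode-Jagers process together with near-independence of non-colliding lineages: the probability that all $\Theta(n)$ of them have died by $(1/|\alpha^*|-\epsilon)\log n$ is bounded above by $\bigl(1-c\,n^{-1+\epsilon|\alpha^*|}\bigr)^{\Theta(n)}=o(1)$.

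\emph{Necessity of \eqref{THMass} and the main obstacle.} If \eqref{THMass} fails then the tail of $L$ is heavy enough that the maximum of the $(1-q^*)n+o_p(n)$ realised infectious periods exceeds $(1/|\alpha^*|+\delta)\log n$ with probability bounded away from $0$ for some $\delta>0$. Since all infections occur by $T^*(n)=O_p(\log n)$, the individual realising this maximum remains infectious beyond $T'_{\gamma}(n)+(1/|\alpha^*|+\delta/2)\log n$ with positive probability, which is incompatible with the claimed convergence. The technically hard step throughout is the coupling itself: different lineages share half-edges and the effective offspring distribution of a freshly-infected vertex depends on the realised local graph state, which only asymptotically equals $\mu^*$. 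Making the approximation quantitative and uniform over the $\Theta(\log n)$-long final window --- so that both the decay rate $\alpha^*$ and the constant prefactor in the survival asymptotic are preserved in the limit, and so that $\Theta(n)$ essentially independent subcritical lineages can be constructed simultaneously --- is where the bulk of the technical work lies.
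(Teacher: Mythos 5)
Your overall strategy coincides with the paper's: split off the necessity direction via the maximum of $\Theta(n)$ i.i.d.\ infectious periods, and prove matching upper and lower bounds by sandwiching the post-$T'_{\gamma}(n)$ epidemic between subcritical branching processes and applying the survival asymptotic $\mathbb{P}(Z(t)>0)=\Theta(e^{\alpha t})$ (the paper's Lemma \ref{sublem} and Corollaries \ref{bpsubcor}). The necessity argument and the lower-bound computation $\bigl(1-c\,n^{-1+\epsilon|\alpha^*|}\bigr)^{\Theta(n)}=o(1)$ are essentially those of Lemmas \ref{revlemup} and \ref{endlemdown}.

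However, there is a genuine gap, and you have located it yourself: the coupling is asserted, not constructed, and the route you propose for it is not the one that works. You aim to show that ``the effective forward offspring measure in the residual epidemic is $\mu^*(dt)$ up to vanishing perturbations,'' made ``quantitative and uniform over the $\Theta(\log n)$-long final window.'' The paper never proves such a uniform approximation, and it would be delicate precisely because the graph state keeps evolving during the final window. Instead it exploits monotonicity: $|\mathcal{E}^{(n)}_S(t)|$ and $\sum_{v\in S^{(n)}(t)}d_v\ind(d_v\geq k)$ are decreasing in $t$ and $|\mathcal{E}^{(n)}_P(t)|$ is increasing, so on the high-probability event $\mathcal{A}^{(n)}(\epsilon)$ of Lemma \ref{eeslemma2} the pairing probability $\kappa^{(n)}(t)$ and the degree tail $\pi^{(n)}_{\geq k}(t)$ are bounded, \emph{simultaneously for all $t$ in the final window}, by their values at the two endpoints $t_1^{(n)}(\epsilon)$ and $\infty$. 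This yields genuine stochastic domination of each lineage $\{J_v^{(n)}(t)\}$ from above and below by branching processes with offspring measures $\mu^{\pm}_{\epsilon}$, which are \emph{not} close to $\mu^*$ but whose Malthusian parameters $\alpha^{\pm}_{\epsilon}$ can be driven to $\alpha^*$ as $\epsilon\searrow 0$ (the prefactor in the survival asymptotic is irrelevant at the $\log n$ scale, so your worry about preserving it is unnecessary). Two further points your sketch elides: (i) for the upper bound one must decompose $I^{(n)}(T'_{\gamma}(n)+t)$ into descendants of the coming generation $V_*^{(n)}(T'_{\gamma}(n))$ (which are ``fresh'' and start at spread-out times $\sigma(v)>T'_{\gamma}(n)$, so the union bound must be integrated against the law of $\sigma(v)-\sigma(\zeta(v))$) plus vertices infected \emph{before} $T'_{\gamma}(n)$ that are still infectious, and it is only this second set that condition \eqref{THMass} controls, via $n\,\mathbb{P}(L>\log n/(|\alpha^*|-\delta))\to 0$; (ii) the lower bound requires no condition on $L$ at all, since the dominated process may be built with $L$ truncated at $1/\epsilon$. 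Without the explicit two-sided domination, the central claim of the lemma is unproven.
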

Note that Lemma \ref{lemint2} implies that for all $\gamma, \gamma' \in (0,1-q^*)$ and all $\epsilon >0$ we have  $\mathbb{P}\left(\frac{|T'_{\gamma}(n)-T'_{\gamma'}(n)|}{\log n}>\epsilon| \mathcal{M}^{(n)}\right)\to 0$ and thus that Lemma \ref{lemint3} actually holds for all $\gamma \in (0,1-q^*)$.

\section{Vaccination}
\label{sec:vacc}
In this section we shortly discuss the effect of vaccination on the duration of an epidemic.  We give heuristics on the effect of vaccinating everybody in the population with an all-or-nothing vaccine in uniformly mixing populations and on configuration model graphs. We assume that the vaccination takes place before the outbreak starts. We only consider the case where the vaccination is not enough to make the epidemic process subcritical, i.e.\ the effective $R_0$ stays strictly larger than 1. 

Let $c \in (0,1]$. 
Recall that with an all-or-nothing vaccine, a vaccinated individual will not be affected by the vaccine (say with probability $c$) or will be immune to the infection (with probability $1-c$), independently of the effect of the vaccine on other individuals. In what follows, we decorate quantities associated with the epidemic in such a vaccinated population with a subscript $c$. 

The all-or-nothing vaccine effectively changes the (susceptible) population size to  $n_c$, a $Bin(n,c)$ distributed random variable and the limiting degree distribution of the initially susceptible vertices (say $D_c$) to a mixed binomial random variable with ``number of trials distribution'' $D$ and ``success probability'' $c$.
Below we use the trivial observation that $\frac{\log(n_c)}{\log n} \parrow 1$. Furthermore,
Straightforward computations give that if $\tilde{D}_c$ is the size biased version of $D_c$, then $\tilde{D}_c-1$ is mixed binomial with  ``number of trials distribution'' $\tilde{D}-1$ and ``success probability'' $c$. Similarly, for $x \in (0,1)$ we have
$$\mathbb{E}\left[(\tilde{D}_c-1)\left((1- x)\right)^{\tilde{D}_c-2}\right]
= c \mathbb{E}\left[(\tilde{D}-1)\left((1-c x)\right)^{\tilde{D}-2}\right].
$$

If $c \psi \mathbb{E}[\tilde{D}-1]>1$, then there is a positive probability of a large outbreak even after vaccination. Let $\mathcal{M}^{(n)}_c$ be the event of a large outbreak in a vaccinated population. 
Using this mixed binomial distribution in equations \eqref{mudtfirst},  \eqref{beginmalt}, \eqref{endmalt1} \eqref{endmalt2} and \eqref{endext} and assuming that $\alpha^*_c$ as defined below exists and satisfies \eqref{THMass}, we obtain that $T_c^*(n)$, the time until the end of the epidemic after vaccination satisfies for all $\epsilon>0$
$$\mathbb{P}\left(\left|\frac{T_c^*(n)}{\log n}-\left(\frac{1}{\alpha'_c}+ \frac{1}{|\alpha_c^*|}\right)\right|>\epsilon\mid\mathcal{M}_c^{(n)}\right)\to 0.$$ 
where $\alpha'_c$ and $\alpha^*_c$ satisfy 
\begin{equation}\label{alphacs}
1 = \int_0^{\infty} e^{-\alpha'_c t} \mu'_c(dt) \qquad \mbox{and} \qquad 
1 = \int_0^{\infty} e^{-\alpha^*_c t} \mu^*_c(dt),
\end{equation}
with
\begin{equation}
\label{vacmu'}
 \mu_c'(dt)= c \mathbb{E}[\tilde{D}-1] \beta e^{-\beta t} \mathbb{P}(L>t) dt,
\end{equation}
\begin{equation}
\label{vacmustar} 
\mu_c^*(dt)= c \mathbb{E}[(\tilde{D}-1)(1-c\psi + c \psi \tilde{q}^*_c)^{\tilde{D}-2}] \beta e^{-\beta t} \mathbb{P}(L>t) dt
\end{equation}
and
\begin{equation}
 \begin{split}
\label{vacq}
\tilde{q}^*_c & =\min
\left\{s \geq 0; s= \mathbb{E}
\left[
\left(1-c + c(1-\psi + \psi s)
\right)^{\tilde{D}-1}
\right]
\right\}\\ 
\ & =\min\left\{s \geq 0; s= \mathbb{E}\left[\left((1-c \psi + c \psi s)\right)^{\tilde{D}-1}\right]\right\}.
\end{split}\end{equation}

We now discuss heuristically the effect of vaccination in a uniformly mixing population (the second example) and in two different configuration model random graphs (the first and third example). In one of those graphs $(\alpha'_c)^{-1}+ |\alpha_c^*|^{-1}$ decreases as $c$ increases (i.e.\ the duration of a large outbreak increases as the vaccine becomes more effective) and an example in which $(\alpha'_c)^{-1}+ |\alpha_c^*|^{-1}$ is strictly less than $(\alpha'_1)^{-1}+ |\alpha_1^*|^{-1}$ for some $c\in (0,1)$ (i.e.\ the duration of a large outbreak is smaller in a vaccinated population than in an unvaccinated population).

\begin{example}
\label{example2}
Let $D$ be Poisson distributed with expectation $\lambda$. Furthermore,
let $L$ have bounded support, so that \eqref{THMass} is satisfied. This condition can easily be relaxed.
Recall from \eqref{psidef} that $\psi = \int_0^{\infty} \beta e^{-\beta t} \mathbb{P}(L>t)dt$. 
It is straightforward to check that 
$\mathbb{E}[\tilde{D}-1]= \lambda$, $\tilde{q}_c^*$
satisfies $\tilde{q}_c^*= e^{-\lambda c \psi (1-\tilde{q}_c^*)}$ and 
$$\mathbb{E}[(\tilde{D}-1)(1-c \psi (1-\tilde{q}_c^*))^{\tilde{D}-2})]= \lambda e^{-\lambda c \psi (1-\tilde{q}_c^*)} = \lambda \tilde{q}^*_c.$$
So, filling this in in \eqref{vacmu'} and \eqref{vacmustar}
we obtain 
\begin{equation}
 \label{Poissonmus}
 \mu_c'(dt) = c \lambda \beta e^{-\beta t} \mathbb{P}(L>t) dt \quad \mbox{and} \quad
\mu_c^*(dt) = c \lambda \tilde{q}^*_c \beta e^{-\beta t} \mathbb{P}(L>t) dt.
\end{equation}
We assume that the disease is supercritical even after vaccination. So, 
\begin{equation}
\label{upperex}
\int_0^{\infty} \mu_c'(dt) =  c \lambda \psi>1
\end{equation}
 and by Lemma \ref{subend}
 \begin{equation}
\label{lowerex}
\int_0^{\infty} \mu_c^*(dt) =  c \lambda \tilde{q}^*_c\psi <1.
\end{equation}

By the definition in \eqref{alphacs}, $\alpha_c'$ and $|\alpha_c^*|$ are given through
\begin{equation}
\label{Poisalp'}
1= \int_0^{\infty} e^{-\alpha'_c t} c \lambda \beta e^{-\beta t} \mathbb{P}(L>T) dt
\end{equation}
and 
\begin{equation}
\label{Poisalp*}
1= \int_0^{\infty} e^{|\alpha^*_c| t} c \tilde{q}^*_c \lambda \beta e^{-\beta t} \mathbb{P}(L>T) dt
\end{equation}
and it follows that $\alpha'_c$ is increasing in $c$ and $|\alpha^*_c|$ is decreasing in $c \tilde{q}^*_c$.

From $\tilde{q}_c^*= e^{-\lambda c \psi (1-\tilde{q}_c^*)}$, we deduce that for $\lambda c \psi>1$
 \begin{equation*}
\frac{d\tilde{q}_c^*}{dc}=-\tilde{q}_c^* \lambda \psi \left(1-\tilde{q}_c^*-c\frac{d\tilde{q}_c^*}{dc}\right)
\qquad \Rightarrow \qquad
\frac{d\tilde{q}_c^*}{dc}=\frac{\lambda \psi \tilde{q}_c^*(1-\tilde{q}_c^*)}{c \lambda \psi \tilde{q}_c^*-1}.
\end{equation*}
and
\begin{equation}
\label{cqder}
\frac{d(c \tilde{q}_c^*)}{dc} 
= \tilde{q}_c^* + \frac{c \lambda \psi \tilde{q}_c^*(1-\tilde{q}_c^*)}{c \lambda \psi \tilde{q}_c^*-1}
=  \tilde{q}_c^*  \frac{c \lambda \psi -1}{c \lambda \psi \tilde{q}_c^*-1},
\end{equation}
which is strictly negative by \eqref{upperex} and \eqref{lowerex}.
So, both $\alpha'_c$ and $|\alpha^*_c|$ are increasing in $c$, which implies that 
$(\alpha'_c)^{-1} + |\alpha^*_c|^{-1}$ and thus the limiting duration of a large outbreak is decreasing in $c$ (and increasing in the efficiency of the vaccine).
\end{example}

\begin{example}
In a uniformly mixing population, all pairs of individuals contact each other independently of each other at rate $\beta$. In order for the model to be interesting and the expected number of contacts per individual to stay constant if $n \to \infty$, we assume $\beta= \beta'/n$. The uniformly mixing population does not satisfy the conditions of our paper, but we can take several approaches to still analyze the uniformly mixing population. 
One is to deduce the branching process approximations used in this paper also for the uniformly mixing population and use that in Theorem \ref{mainthm} the quantities $\alpha'_c$ and $\alpha^*_c$ are the Malthusian parameters for those branching processes.

We however, use the previous example of analyzing an epidemic on a configuration model with a Poisson degree distribution, where $\beta' = \beta \lambda$, and where the expected degree $\lambda$ goes to infinity. It is easily checked that the epidemic generated graph (see e.g.\ \cite{ball2014epidemics}) of the epidemic on the configuration model converges locally to that of the uniformly mixing population.

Note that for all $t >0$ we have that as $\lambda \to \infty$, then $e^{-(\beta'/ \lambda)t} \to 1$ and 
\begin{equation}
\label{lampsi}
\lambda \psi = \lambda \int_0^{\infty} \frac{\beta'}{\lambda} e^{-(\beta'/\lambda) t} \mathbb{P}(L>t) dt \to \beta' \mathbb{E}[L].
\end{equation}
We further observe that for $x>1$, the solution of  $s = e^{-x(1-s)}$ is given by $s=-x^{-1} W(-xe^{-x})$, where $W(\cdot)$ is the principal branch of the Lambert $W$ function, which is a continuous function of $x$ \cite{Corl96}.
Together with \eqref{lampsi} this implies that, 
$$\tilde{q}_c^* \to \min\{s \geq 0; s= \mathbb{E}[e^{-c \beta' \mathbb{E}[L](1-s)}]\} \qquad \mbox{as $\lambda \to \infty$.}$$ 
Filling in $\beta = \beta'/\lambda$ in \eqref{Poisalp'} and \eqref{Poisalp*} and taking the limit $\lambda \to \infty$
gives
 that $\alpha_c'$ and $\alpha^*_c$  satisfy $$1= \int_0^{\infty} e^{-\alpha'_c t} c \beta' \mathbb{P}(L>t) dt \qquad \mbox{and } \qquad  1= \int_0^{\infty} e^{-\alpha_c^* t} c \beta' \tilde{q}_c^* \mathbb{P}(L>t) dt.$$ 
The inequalities \eqref{upperex} and \eqref{lowerex} should still hold.
It is again immediate that  $\alpha'_c$ is increasing in $c$ and
$|\alpha^*_c|$ is decreasing in $c \tilde{q}_c^*$.
Filling in $\lambda \psi = \beta'\mathbb{E}[L]$ in \eqref{cqder} we obtain
\begin{equation}\label{eq:58}
\frac{d(c\tilde{q}_c^*)}{dc}=\frac{c\beta' \mathbb{E}[L]-1}{c\beta' \mathbb{E}[L]\tilde{q}_c^*-1}\tilde{q}_c^*,
\end{equation}
which is strictly negative by  \eqref{upperex} and \eqref{lowerex}.

As in Example \ref{example2} this implies that increasing the efficiency of the vaccination, without making the epidemic subcritical increases the asymptotic duration of the epidemic.
\end{example}

\begin{example}
\label{exampel3}
For this example we use the following intuition.
Unvaccinated vertices of very high degree are very likely te be infected during the early stages of an epidemic, even if a fraction of their neighbors are vaccinated. Therefore, those vertices will hardly play a role in the duration of the final phase of the epidemic. Vertices who have initially 1 unvaccinated neighbor cannot be infected  and pass the disease on to other individuals, because the unvaccinated neighbor must be their infector. So, vertices with one unvaccinated neighbor that are still susceptible after the intermediate phase of the epidemic will shorten the final stage of the epidemic. 
If the infectious period is exponentially distributed, then vertices with two unvaccinated neighbors in the final stages of the epidemic do not lengthen the duration of the epidemic if they get infected, because the neighbor who is not the infector might be infected before, in which case the number of infectious, susceptible pairs decreases by the infection, or that neighbor is still susceptible in which case the number of infectious, susceptible pairs stays the same. 
So, as an example we consider a population in which vertices may have very large degree or have degree 1 or 2, and such that after vaccination of a small proportion of the population the ``effective degree distribution'' has more mass on 1. 

Using this intuition we consider the model with $$\mathbb{P}(D=1) = \mathbb{P}(D=2) = 100/201 \qquad \mbox{and} \qquad \mathbb{P}(D=100)=1/201.$$ This implies $$\mathbb{P}(\tilde{D}=1) =\mathbb{P}(\tilde{D}=100) = 1/4 \qquad \mbox{and} \qquad \mathbb{P}(\tilde{D}=2)=1/2.$$ 
Furthermore, let $t_0 >>0$ and assume that $\beta=99/100$ and $\mathbb{P}(L>t) = e^{-\mu t}$ for $t < t_0$ and $\mathbb{P}(L>t_0) =0$ with $\mu =1/100$, that is, $L$ is exponentially distributed with a cut-off at $t_0$. This cut-off is needed for  $\int_0^{\infty} t e^{-\alpha^* t} L(dt)$ to be finite. The above parameters make that in the limit $t_0 \to \infty$, we obtain $\psi = 99/100$. Without vaccination $\frac{1}{\alpha'} + \frac{1}{|\alpha^*|} = 2.04$, while with $1\%$ of the population vaccinated, i.e.\ with $c=0.99$ we obtain $\frac{1}{\alpha'_c} + \frac{1}{|\alpha^*_c|} = 2.021$. That is, vaccinating $1 \%$ of the population does not necessarily prevent the large outbreak and if a large outbreaks occurs it ends faster. 
\end{example}

\section{The epidemic on the graph}
\label{sec:epid}

\subsection{Construction of the graph together with the epidemic}
\label{sec:construct}

For the proof of the main theorems we rely on the following explicit step-by-step simultaneous construction of
the graph $G^{(n)}$ and the epidemic process $\{S^{(n)}(t),I^{(n)}(t),R^{(n)}(t); t \geq 0\}$, or more precisely, on the simultaneous construction of the cluster of vertices of $G^{(n)}$ which are ultimately recovered and the epidemic process. 
In this construction we see contacts as asymmetric: the times $v$ contacts $v'$ are not necessarily the same as when $v'$ contacts $v$, but contacts in both directions occur according to independent Poisson Processes with intensity $\beta$. Only when an infectious vertex contacts a susceptible neighbor (and not when a susceptible vertex contacts an infectious neighbor) the susceptible becomes infectious. Since in both the directed and undirected interpretation of contacts, contacts from an infected to a susceptible neighbor occur at intensity $\beta$, the spread of the epidemic is unaltered.  

Label the vertices in $V^{(n)}$ by $1, 2, \cdots, n$, such that
$$\mathbf{d}^{(n)}= d^{(n)}_1, \cdots, d^{(n)}_n = d_1,  \cdots, d_n$$ is a \textit{non-decreasing} degree sequence satisfying Assumptions \ref{mainassump}. 
Let  
$$\mathbf{s}^{(n)}= \{(1,1), (1,2),\cdots, (1,d_1), (2,1), \cdots, (2,d_2), \cdots, (n,1), \cdots, (n,d_n)\}$$ be the set containing $\ell(n)$ elements, corresponding to the half-edges used in the construction of $G^{(n)}$ and let $$\mathbf{x}^{(n)}=(x_1,x'_1),(x_2,x'_2), \cdots$$ be an infinite sequence of (2 dimensional) elements of $\mathbf{s}^{(n)}$, where the elements are chosen independently with replacement and uniformly at random.
Further define i.i.d.\ random variables $$\{\tau_{v,j}; (v,j) \in \mathbf{s}^{(n)}\}$$ which are exponentially distributed with expectation $1/\beta$. We may interpret $\tau_{v,j}$ as the first time after infection (if ever) $v$ makes a contact along the half-edge $(v,j)$.
Then define i.i.d.\ random variables $\{L_v; v \in V^{(n)}\}$ all distributed as $L$.
If $v$ becomes infectious during the epidemic, we interpret  $L_v$ as the infectious period of vertex $v$. Otherwise, $L_v$ has no epidemiological interpretation. 
Furthermore, let $x_0$ be the initially infected vertex, which is chosen uniformly at random from the population.
All random variables defined in this paragraph are independent of each other.

For reasons that will become clear in the proof of Theorem \ref{mainthmsec}, we define 
\begin{equation}\label{Lprime}
L'_v = \min(L_v, \max_{1 \leq j \leq d_v} \tau_{v,j}), \qquad \mbox{for $v\in V^{(n)}$.}
\end{equation}
So $L'_v$ time units after infection, $v$ is either recovered or has made contacts to all of its neighbors. This implies that $L'_v$ time units after infection $v$ is no longer the infectious vertex in an infectious-susceptible pair, because $v$ has either recovered or has made contacts to all of its neighbors (of which some might have been infected before). In particular, if we say that for $v \in V^{(n)}$, vertex $v$ recovers $L'_v$ instead of $L_v$ time units after $v$ got infected, the spread of the epidemic is unaltered.

We define the following process of partitions of the set of half-edges and vertices, in which the half-edges are paired at the moment a contact involving an infectious vertex is made.
\begin{multline}
 \{\mathcal{X}^{(n)}(t);t \geq 0\} 
%= \{\mathcal{X}(t);t \geq 0\}
\\
 =  \{(S^{(n)}(t),I^{(n)}(t),R^{(n)}(t),\mathcal{E}^{(n)}_S(t),\mathcal{E}^{(n)}_I(t),\mathcal{E}^{(n)}_R(t),\mathcal{E}^{(n)}_P(t);t \geq 0\}. 
\end{multline}
In this process $S^{(n)}(t)$, $I^{(n)}(t)$ and $R^{(n)}(t)$ are respectively the sets of susceptible, infectious and recovered vertices at time $t$. The set $\mathcal{E}^{(n)}_S(t)$ consists of the unpaired half-edges belonging to  vertices in $S^{(n)}(t)$, $\mathcal{E}^{(n)}_I(t)$ is the set of unpaired half-edges belonging to  vertices in $I^{(n)}(t)$, $\mathcal{E}^{(n)}_R(t)$ is the set of unpaired half-edges belonging to vertices in $R^{(n)}(t)$ and $\mathcal{E}^{(n)}_P(t)$ is the set of paired half-edges.
Let 
$$\sigma(v) = \inf\{t\geq 0; v \in I^{(n)}(t)\}, \qquad \mbox{for $v\in V^{(n)}$}$$ 
be the time that $v$ gets infected, which corresponds to the time at which the first half-edge belonging to vertex $v$ is added to $\{\mathcal{E}^{(n)}_P(t);t \geq 0\}$.
%For $t \geq 0$, let $$\mathcal {V}^{(n)}(t) = V^{(n)}(t)\setminus S^{(n)}(t)$$ be the set of infectious and recovered vertices. This set correspond to the vertices to which at least one of the half-edges in $\mathcal{E}^{(n)}_I(t)\cup \mathcal{E}^{(n)}_R(t) \cup \mathcal{E}^{(n)}_P(t)$ belongs. 
Throughout the process the sequence $\mathbf{x}^{(n)}$ is also explored element by element and $\mathbf{x}^{(n)}(t)$ is the set of elements of $\mathbf{x}^{(n)}$ explored before or at time $t$.

The construction of $\{\mathcal{X}^{(n)}(t);t \geq 0\}$ is as follows. 
\begin{itemize}
\item Start of construction: Choose the initial infected vertex $x_0$ uniformly at random. So, $I^{(n)}(0) = \{x_0\}$. Set $S^{(n)}(0) = V^{(n)}\setminus x_0$
and $\sigma(x_0) =0$. Note that $x_0$ has degree $d_{x_0}$ in $G^{(n)}$.
Furthermore, the set $\mathcal{E}^{(n)}_I(0)= \{(v,j)\in \mathbf{s}^{(n)};v=x_0\}$ consists of all half-edges attached to $x_0$, while all other half-edges are in $\mathcal{E}^{(n)}_S(0)= \{(v,j)\in \mathbf{s}^{(n)};v \neq x_0\}$. 
None of the elements of $\mathbf{x}^{(n)}$ are explored yet at time $0$, i.e.\ $\mathbf{x}^{(n)}(0)= \emptyset$. 

\item Assume that at time $t$, $\mathbf{x}^{(n)}$ is explored up to and including $(x_k,x'_k)$, i.e.\ $\mathbf{x}^{(n)}(t)=\{(x_1,x'_1),\cdots, (x_k,x'_k)\}$. Define $$
t_+(t)= \min (\{\sigma(v)+L_v; v \in I^{(n)}(t)\} \cup \{\sigma(v)+\tau_{v,j}; (v,j) \in \mathcal{E}^{(n)}_I(t)\}),$$
which can be interpreted as the first time after time $t$ that a change in $\{\mathcal{X}^{(n)}(t);t \geq 0\}$ occurs, by either a recovery of an infected vertex or a pairing of two half-edges and possibly the infection of a vertex. Because the distribution of the ``$\tau$ random variables'' do not have any atoms, the infection times of (infected) vertices are almost surely different and at $t_+(t)$ occurs almost surely only one event. 
In the interval $[t,t_+(t))$ the process $\mathcal{X}^{(n)}(t)$ is constant, while if 
$t_+(t)=\sigma(u) + L_u$ for some $u \in V^{(n)}$, then all $\{(v,j) \in \mathcal{E}^{(n)}_I(t); v=u\}$ are in $\mathcal{E}^{(n)}_R(t_+(t))$ and $u \in R(t_+(t))$.
If  $t_+(t) = \sigma(u)+\tau_{u,j}$ for some $(u,j) \in \mathcal{E}^{(n)}_I(t)$ , then $(u,j) \in \mathcal{E}^{(n)}_P(t_+(t))$. In addition, 
consider $(x_{k+1},x'_{k+1})$, which is the half-edge $(u,j)$ ``wants to'' be paired with if it is still possible. The half-edge $(x_{k+1},x'_{k+1})$ is considered explored from time $t_+(t)$ on, i.e.\ $(x_{k+1},x'_{k+1}) \in \mathbf{x}^{(n)}(t_+(t))$. We distinguish between the following cases for further changes in $\mathcal{X}^{(n)}(t)$ at time $t_+(t) = \sigma(u)+\tau_{u,j}$.
\begin{itemize}
\item If $(x_{k+1},x'_{k+1}) \in \mathcal{E}^{(n)}_S(t)$, then  $(x_{k+1},x'_{k+1}) \in \mathcal{E}^{(n)}_P(t_+(t))$, while all $d_{x_{k+1}}-1$ other half-edges belonging to $x_{k+1}$ (which  necessarily belong to $\mathcal{E}^{(n)}_S(t)$) move to $\mathcal{E}^{(n)}_I(t_+(t))$. Furthermore, $\sigma(x_{k+1})= t_+(t)$ and $x_{k+1} \in I^{(n)}(t_+(t))$.
\item If $(x_{k+1},x'_{k+1}) \in \mathcal{E}^{(n)}_I(t) \cup \mathcal{E}^{(n)}_R(t)$, then again $(x_{k+1},x'_{k+1}) \in \mathcal{E}^{(n)}_P(t_+(t))$, while none of the other half-edges and none of the vertices changes.
\item If $(x_{k+1},x'_{k+1}) \in \mathcal{E}^{(n)}_P(t)$, then 
take the same steps as above with $(x_{k+1},x'_{k+1})$ replaced by $(x_{k+2},x'_{k+2})$ and so on, while treating all considered half-edges as explored. 
\end{itemize}
\item Continue the above construction until $I^{(n)}(t) = \emptyset$. That is, until there are no infectious vertices left. 
\end{itemize}

\subsection{Branching processes theory background}
\label{sec:BT}

Throughout the manuscript we use several continuous time branching processes. In this section we summarize some of the results we use in the analysis of the duration of the epidemic.
Some of the branching processes that we use are  two stage branching processes in the sense that the reproduction law for the first ancestor is different from that of the other particles in the process. In  the exposition below we use a single stage branching process, but extending the results to two stage branching processes is straightforward. For further theory we refer to \cite[Chapter 6]{jagers1975branching} and \cite[Chapter 3]{van2016random}.

Assume that particles give birth to other particles according to a random point process distributed as $\{\xi(t);t \geq 0\}$. Define $\mu(t) = \mathbb{E}[\xi(t)]$.  
If $\mu(\infty) >1$  
then equation (\ref{alphadef}) has a strictly positive solution $\alpha$, which is called the Malthusian parameter of the process. We call a process supercritical if $\mu(\infty) >1$, critical if $\mu(\infty) = 1$ and subcritical if $\mu(\infty) <1$. We decorate particles in the branching process with a lifetime, distributed as some $[0,\infty]$-valued random variable $\Lambda$ and we assume that $\mathbb{P}(\xi(\Lambda)=\xi(\infty)) =1$.

Let $Z(t)$ be the number of particles in the branching process at time $t$ and $Z^{tot}(t)$ the number of particles born in the branching process up to and including time $t$. Furthermore, let $Z(t;a)$ be the number of particles alive at time $t$, born after time $t-a$.
The following Lemma follows immediately from Theorems 2.1 and 2.4 of \cite{Iksa15} and Theorem 5.4 of \cite{nerman1981}.
\begin{lemma}
\label{bpconv}
Assume  $\mu(\infty)>1$ and let $\alpha$ be the Malthusian parameter defined in  (\ref{alphadef}). Furthermore, for $\log^+ t= \log(\max(1,t))$, if there exist $\epsilon>0$ such that $\int_0^{\infty}t (\log^+ t)^{1+\epsilon}e^{-\alpha t} \mu(dt)<\infty$, then almost surely and in expectation
\begin{equation}
\label{Wlimit}
e^{-\alpha t}Z(t) \to W  \quad \mbox{and} \quad e^{-\alpha t}Z^{tot}(t) \to W' \qquad \mbox{as $t \to \infty$,} 
\end{equation}
where $W$ and $W'$ are a.s.\ finite random variables satisfying 
\begin{equation}
\label{positiveW}
\mathbb{P}(W>0)=\mathbb{P}(W'>0) =\mathbb{P}(Z(t) \not\to 0, \mbox{ for $t \to \infty$}). 
\end{equation}
If in addition $$\mathbb{E}\left[\int_0^{\infty}e^{-\alpha t}\xi(dt) \log^+\left(\int_0^{\infty}e^{-\alpha t}\xi(dt)\right)\right]<\infty,$$ then a.s.\ on $\{Z(t) \to \infty\}$ we have 
\begin{equation}
\frac{Z(t;a)}{Z(t)} \to \frac{\int_0^a\mathbb{P}(L>a)e^{-\alpha u} du}{\int_0^{\infty}\mathbb{P}(L>a)e^{-\alpha u} du}, \qquad \mbox{as $t \to \infty$}.
\end{equation}
\end{lemma}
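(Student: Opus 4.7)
The plan is that this lemma is essentially a translation into this paper's notation of standard facts about general (Crump--Mode--Jagers) continuous-time branching processes; almost no new probabilistic work is needed, and the task reduces to matching hypotheses and conclusions to the cited theorems.

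First I would verify that $\alpha$ in (\ref{alphadef}) is well defined: the map $x\mapsto\int_0^\infty e^{-xt}\mu(dt)$ is continuous and strictly decreasing on its domain of finiteness, equals $\mu(\infty)>1$ at $x=0$, and tends to $0$ as $x\to\infty$, so a unique positive $\alpha$ solves $\int_0^\infty e^{-\alpha t}\mu(dt)=1$. Next, for the convergence of $e^{-\alpha t}Z(t)$ in (\ref{Wlimit}) together with the identification of $\{W>0\}$ in (\ref{positiveW}), I would invoke Theorems 2.1 and 2.4 of \cite{Iksa15}. Their hypothesis, expressed on the mean measure $\mu$, is exactly $\int_0^\infty t(\log^+ t)^{1+\epsilon}e^{-\alpha t}\mu(dt)<\infty$ as stated here, and the conclusion gives almost sure and $L^1$ convergence of $e^{-\alpha t}Z(t)$ to an a.s.\ finite $W$ with $\{W>0\}=\{Z(t)\not\to 0\}$ almost surely. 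The statement for $Z^{tot}(t)$ is obtained in the same way (as $Z^{tot}(t)$ is the CMJ process counted with the characteristic $\phi\equiv 1$): either directly from \cite{Iksa15}, or by splitting $Z^{tot}(t)=Z(t)+(\text{already-extinct particles by time }t)$ and applying an Abelian/renewal argument, showing $W'$ exists and is a positive scalar multiple of $W$, so in particular $\{W'>0\}=\{W>0\}$.

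For the stable-age statement I would then invoke Theorem 5.4 of \cite{nerman1981}. Under the additional hypothesis $\mathbb{E}[\hat\xi\log^+\hat\xi]<\infty$ on $\hat\xi:=\int_0^\infty e^{-\alpha t}\xi(dt)$, Nerman's theorem gives, for any admissible random characteristic $\phi$, the almost sure limit
\[
e^{-\alpha t}Z^{\phi}(t)\longrightarrow W\cdot\alpha\int_0^\infty e^{-\alpha u}\mathbb{E}[\phi(u)]\,du
\]
on $\{Z(t)\to\infty\}$. Choosing the characteristic ``the particle is alive and of age at most $a$'' gives $Z^\phi(t)=Z(t;a)$, while the characteristic ``the particle is alive'' gives $Z^\phi(t)=Z(t)$. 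Dividing the two almost sure limits, the common factor $W$ cancels and yields exactly the ratio in the statement.

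The only real obstacle is bookkeeping: reconciling the two forms of moment condition (the mean-measure form $\int_0^\infty t(\log^+ t)^{1+\epsilon}e^{-\alpha t}\mu(dt)<\infty$ versus Nerman's $\hat\xi\log^+\hat\xi$ form), identifying the correct random characteristic in each case, and noting that the ratio limit is asserted only on $\{Z(t)\to\infty\}$ so no degenerate-denominator issue arises. No new probabilistic ingredient is introduced by the lemma itself.
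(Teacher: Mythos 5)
Your proposal matches the paper exactly: the authors give no separate proof of this lemma but state that it ``follows immediately from Theorems 2.1 and 2.4 of \cite{Iksa15} and Theorem 5.4 of \cite{nerman1981},'' which is precisely the citation-and-translation argument you outline. The additional bookkeeping you describe (well-definedness of $\alpha$, choice of characteristics, reconciling the moment conditions) is consistent with what the paper implicitly relies on.
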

We need the following corollary of \eqref{Wlimit} and \eqref{positiveW} in this lemma. 
\begin{corollary}
\label{bpsupercor}
Assume that the conditions of Lemma \ref{bpconv} hold.
For  $k \in \mathbb{N}$, define $\hat{T}_{k}= \inf\{t \geq 0;Z(t) \geq k\}$ and  
$\hat{T}'_{k}= \inf\{t \geq 0;Z^{tot}(t) \geq k\}$.
Then a.s.\ on $\{Z(t) \to \infty \mbox{ as $t \to \infty$} \}$ we have  that
\begin{equation}
\frac{\hat{T}_k}{\log k} \to \frac{1}{\alpha} \quad \mbox{and} \quad 
\frac{\hat{T}'_k}{\log k} \to \frac{1}{\alpha} \qquad \mbox{as $k \to \infty$.}
\end{equation}
\end{corollary}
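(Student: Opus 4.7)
The plan is to deduce the corollary directly from the almost sure exponential growth guaranteed by Lemma \ref{bpconv}. By \eqref{Wlimit} and \eqref{positiveW}, on the survival event $\{Z(t)\to\infty\}$ we have $e^{-\alpha t}Z(t)\to W$ and $e^{-\alpha t}Z^{tot}(t)\to W'$ almost surely, with $W,W'>0$ on this event. Taking logarithms on the survival event, this yields
\begin{equation*}
\log Z(t)=\alpha t+\log W+o(1)\qquad\text{and}\qquad \log Z^{tot}(t)=\alpha t+\log W'+o(1)
\end{equation*}
as $t\to\infty$, where the $o(1)$ terms are almost sure.

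Next I would verify that $\hat{T}_k,\hat{T}'_k<\infty$ for every $k$ on the survival event (immediate since $Z(t),Z^{tot}(t)\to\infty$) and that $\hat{T}_k,\hat{T}'_k\to\infty$ as $k\to\infty$ (immediate since $Z(t)$ and $Z^{tot}(t)$ are finite at every finite $t$). I would then observe that $Z^{tot}$ is non-decreasing and jumps by $+1$ at each birth, so $Z^{tot}(\hat{T}'_k)=k$; and that $Z$ increases only by $+1$ at each birth (and decreases at deaths), so that first-passage above level $k$ is achieved by a unit upward jump, giving $Z(\hat{T}_k)=k$ almost surely. The justification relies on the underlying point processes being atomless, so that two births almost surely do not occur at the same time — a routine consequence of the construction in Section \ref{sec:construct}, where the times $\tau_{v,j}$ are exponentially distributed.

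Substituting $t=\hat{T}_k$ and $t=\hat{T}'_k$ into the logarithmic expansions above yields
\begin{equation*}
\log k=\alpha\hat{T}_k+\log W+o(1)\qquad\text{and}\qquad \log k=\alpha\hat{T}'_k+\log W'+o(1).
\end{equation*}
Dividing by $\log k$ and using $\log W/\log k\to 0$ and $\log W'/\log k\to 0$ on $\{W>0\}$ and $\{W'>0\}$ respectively, both ratios $\hat{T}_k/\log k$ and $\hat{T}'_k/\log k$ converge almost surely to $1/\alpha$. The only step requiring any care is the identification $Z(\hat{T}_k)=k$; beyond this, the rest is routine real analysis and the result follows.
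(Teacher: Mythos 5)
Your proof is correct and follows essentially the same route as the paper: both rest on evaluating the almost sure limits $e^{-\alpha t}Z(t)\to W>0$ and $e^{-\alpha t}Z^{tot}(t)\to W'>0$ from \eqref{Wlimit}--\eqref{positiveW} at the first-passage times, using $Z(\hat{T}_k)=k$. The only cosmetic difference is that you argue directly by taking logarithms, whereas the paper argues by contradiction along a subsequence; the substance is identical.
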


\begin{proof}
We only provide the proof of $(\log k)^{-1}\hat{T}_k \asarrow \alpha^{-1}$ as $k \to \infty$. The second statement can be proved in an identical way. 

If  for $\omega \in \Omega$, $(\log k)^{-1}\hat{T}_k(\omega) \not\to \alpha^{-1}$ as $k \to \infty$, then  there exists $\epsilon >0$ such that $|(\log k)^{-1}\hat{T}_k(\omega) - \alpha^{-1}| >\epsilon$  for infinitely many $k \in \mathbb{N}$. Assume that $(\log k)^{-1}\hat{T}_k(\omega) - \alpha^{-1} >\epsilon$ for infinitely many $k$. If $(\log k)^{-1}\hat{T}_k(\omega) - \alpha^{-1} <-\epsilon$ for infinitely many $k$, the proof is similar.

Let $k_1, k_2, \cdots$ be an increasing infinite sequence of integers, for which $(\log k_j)^{-1}\hat{T}_{k_j}(\omega) - \alpha^{-1} >\epsilon$.
Since $(\log k_j)^{-1}\hat{T}_{k_j}(\omega) - \alpha^{-1}>\epsilon$ implies  
$\hat{T}_{k_j}(\omega) > (\alpha^{-1} + \epsilon) \log k_j$, we have 
$$e^{-\alpha \hat{T}_{k_j}(\omega)}Z(\hat{T}_{k_j}(\omega)) = e^{-\alpha \hat{T}_{k_j}(\omega)}k_j < (k_j)^{-\alpha \epsilon},
$$
which converges to $0$ as $j \to \infty$. So, $e^{-\alpha \hat{T}_{k_j}(\omega)}Z(\hat{T}_{k_j}(\omega))\to  0$ as $j \to \infty$. This implies that if $$\mathbb{P}\left(\sum_{k=1}^{\infty}   \ind\left( \frac{\hat{T}_k}{\log k} - \frac{1}{\alpha} >\epsilon\right)= \infty\right)>0,$$ then \eqref{Wlimit} and \eqref{positiveW} cannot both be true, which finishes the proof.
\end{proof}

To approximate the final phase of an epidemic we use a subcritical branching process. For these branching processes  equation \eqref{alphadef} does not necessarily have a solution. However if it has, then we may obtain some useful results. First note that $\alpha<0$.  Let the life-length of particles be distributed as $\Lambda$. From Theorem 6.2 of \cite{jagers1975branching}, we immediately obtain

\begin{lemma}
\label{sublem}
Let $\mu(\infty)< 1$. Assume 
\begin{itemize}
 \item[(i)] equation (\ref{alphadef}) has a solution, 
 \item[(ii)] $\int_0^{\infty} t e^{|\alpha| t} \Lambda(dt)<\infty$, 
 \item[(iii)] $\int_0^{\infty} t e^{|\alpha| t} \mu(dt)<\infty$ and 
 \item[(iv)] $\mathbb{E}\left[\int_0^{\infty}e^{|\alpha| t}\xi(dt) \log^+(\xi(\infty))\right]< \infty$,
\end{itemize}
then  
$e^{|\alpha| t}\mathbb{P}(Z(t)\!>\!0|Z(0)\!=\!1)$ converges to a strictly positive and finite limit.
\end{lemma}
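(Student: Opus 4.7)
The plan is to obtain this lemma as an immediate consequence of Theorem~6.2 of \cite{jagers1975branching}, by verifying that hypotheses (i)--(iv) are exactly the conditions that theorem requires for a general (Crump--Mode--Jagers) subcritical branching process. The only preliminary observation I would make is that since $\mu(\infty)<1$ and $g(x):=\int_0^\infty e^{-xt}\mu(dt)$ is decreasing with $g(0)=\mu(\infty)<1$, any Malthusian parameter supplied by (i) is necessarily strictly negative, so $|\alpha|=-\alpha$ and the exponentially tilted kernel $\hat{\mu}(dt):=e^{|\alpha|t}\mu(dt)=e^{-\alpha t}\mu(dt)$ is, by \eqref{alphadef}, a genuine probability measure. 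This is the structural fact that makes a renewal-theoretic analysis available in the subcritical regime.

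To describe the argument underlying Jagers' theorem: I would set $q(t)=\mathbb{P}(Z(t)>0\mid Z(0)=1)$ and, by conditioning on the ancestor's lifetime $\Lambda$ and on the birth times $\sigma_1,\sigma_2,\ldots$ of the daughters, derive the functional equation
\begin{equation*}
1-q(t)=\mathbb{E}\!\left[\mathbf{1}\{\Lambda\leq t\}\prod_{i:\,\sigma_i\leq t}\bigl(1-q(t-\sigma_i)\bigr)\right],
\end{equation*}
using that $\mathbb{P}(\xi(\Lambda)=\xi(\infty))=1$ and that subtrees are independent. Multiplying through by $e^{|\alpha|t}$ and linearising around small $q$ (justified in the subcritical regime where $q(t)\to 0$) shows that $u(t):=e^{|\alpha|t}q(t)$ satisfies, up to higher-order correction terms, a renewal equation driven by the probability measure $\hat{\mu}$. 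Condition (iii) gives $\hat{\mu}$ a finite first moment, and condition (ii) controls the contribution from the ancestor's lifetime, which together make the leading-order driving term directly Riemann integrable. Feller's key renewal theorem then delivers $u(t)\to c$ for some $c\in[0,\infty)$.

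The delicate step, which I expect to be the main obstacle, is ruling out the degenerate case $c=0$. This is where the $x\log x$-flavoured moment condition (iv) is indispensable: it plays a role dual to the one it plays in the supercritical setting of Lemma~\ref{bpconv}, where it guarantees the Malthusian martingale has a nondegenerate limit, and here it is precisely what allows one to control the second-moment remainder arising from the linearisation uniformly in $t$, thereby preventing the renewal limit from collapsing to zero. I would reproduce this comparison argument essentially verbatim from the proof of Theorem~6.2 in Jagers; once positivity and finiteness of the limit are secured, the statement of the lemma follows directly.
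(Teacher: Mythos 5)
Your proposal matches the paper exactly: the paper gives no independent proof of Lemma \ref{sublem} but simply states that it follows immediately from Theorem 6.2 of \cite{jagers1975branching}, whose hypotheses are conditions (i)--(iv) verbatim. Your additional sketch of the renewal-theoretic mechanism behind that theorem is a correct gloss but is not part of the paper's argument, which stops at the citation.
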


Below we use the following Corollary of this Lemma.

\begin{corollary}
\label{bpsubcor}
Assume that the conditions of Lemma \ref{sublem} hold.
For  $k \in \mathbb{N}$, assume  $Z(0)=k$ and define $\hat{T}^*_{k}= \inf\{t \geq 0;Z(t) =0\}$.
Then, $\frac{\hat{T}^*_k}{\log k} \parrow \frac{1}{|\alpha|}$
as $k \to \infty$.
\end{corollary}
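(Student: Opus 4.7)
The plan is to exploit the branching property: when $Z(0)=k$, the $k$ initial ancestors evolve as independent copies of the same subcritical branching process, so if we write $T_i := \inf\{t\geq 0 : Z^{(i)}(t) = 0\}$ for the extinction time of the $i$-th subtree, then $\hat{T}^*_k$ has the same distribution as $\max_{1\leq i\leq k} T_i$, with the $T_i$ i.i.d.\ copies of $\hat{T}^*_1$. This reduces the problem to a classical maximum-of-i.i.d.-variables estimate, where the tail of each $T_i$ is controlled by Lemma \ref{sublem}.

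Concretely, Lemma \ref{sublem} provides a constant $c\in(0,\infty)$ such that $\mathbb{P}(T_1 > t) = \mathbb{P}(Z(t)>0 \mid Z(0)=1) = c\, e^{-|\alpha|t}(1+o(1))$ as $t\to\infty$. Fix $\varepsilon > 0$ and set $t^{+}_k = (|\alpha|^{-1}+\varepsilon)\log k$ and $t^{-}_k = (|\alpha|^{-1}-\varepsilon)\log k$ (the latter only matters when it is positive, i.e.\ eventually). For the upper deviation I would use a union bound:
\begin{equation*}
\mathbb{P}\bigl(\hat{T}^*_k > t^{+}_k\bigr) \leq k\, \mathbb{P}(T_1 > t^{+}_k) \leq 2c\, k\cdot k^{-1-|\alpha|\varepsilon} = 2c\, k^{-|\alpha|\varepsilon} \longrightarrow 0.
\end{equation*}
For the lower deviation I would use independence:
\begin{equation*}
\mathbb{P}\bigl(\hat{T}^*_k \leq t^{-}_k\bigr) = \bigl(1 - \mathbb{P}(T_1 > t^{-}_k)\bigr)^k \leq \exp\!\bigl(-k\, \mathbb{P}(T_1 > t^{-}_k)\bigr) \leq \exp\!\bigl(-\tfrac{c}{2}\, k^{|\alpha|\varepsilon}\bigr) \longrightarrow 0,
\end{equation*}
using $1-x \leq e^{-x}$ and the lower bound from the tail asymptotics of $T_1$ (valid for $k$ large enough that $t^{-}_k$ is in the asymptotic regime). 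Combining the two displays gives $\mathbb{P}(|\hat{T}^*_k/\log k - 1/|\alpha|| > \varepsilon) \to 0$, which is the claim.

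There is no real obstacle here: the argument is the standard extreme-value heuristic, and all the heavy lifting (the precise rate at which the single-particle survival probability decays) has been done in Lemma \ref{sublem}. One only needs to be mildly careful that the asymptotic $\mathbb{P}(T_1>t)\sim c\,e^{-|\alpha|t}$ is used at times $t^{\pm}_k \to \infty$, which is automatic. In fact the same computation yields convergence in distribution of $\hat{T}^*_k - |\alpha|^{-1}\log k$ to a Gumbel-type limit, but for the corollary convergence in probability suffices.
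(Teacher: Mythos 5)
Your proposal is correct and follows essentially the same route as the paper: decompose the process started from $k$ ancestors into $k$ independent single-ancestor copies, so that $\mathbb{P}(Z(t)=0\mid Z(0)=k)=\bigl(\mathbb{P}(Z(t)=0\mid Z(0)=1)\bigr)^k$, and then feed in the two-sided exponential tail bound for the single-particle survival probability supplied by Lemma \ref{sublem} at times $t^{\pm}_k$ of order $\log k$. Your union bound for the upper deviation and the $1-x\leq e^{-x}$ estimate for the lower deviation are just minor cosmetic variants of the paper's direct computation of $(1-p_k)^k$, so there is nothing substantive to distinguish the two arguments.
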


\begin{proof} 
It is enough to prove that for every $\delta \in (0,1)$ and as $k \to \infty$,
\begin{equation*}
\begin{split}
\mathbb{P}\left(\hat{T}_k^* \leq  \frac{1+\delta}{|\alpha|}\log k \right) &=  \mathbb{P}\left(Z\left(\frac{1+\delta}{|\alpha|}\log k\right)=0|Z(0)=k\right) \to 1 \quad \mbox{and}\\ 
\mathbb{P}\left(\hat{T}_k^* \leq  \frac{1-\delta}{|\alpha|}\log k \right) &=  \mathbb{P}\left(Z\left(\frac{1-\delta}{|\alpha|}\log k\right)=0|Z(0)=k\right) \to 0.
\end{split}
\end{equation*}
Note that $\{Z(t);t\geq 0\}$ is distributed as $\{\sum_{j=1}^{k}Z_j(t);t \geq 0\}$, where for $j \in \mathbb{N}$, the processes $\{Z_j(t);t \geq 0\}$ are independent branching processes distributed as the subcritical branching process satisfying $Z_j(0)=1$.
So, $$\{Z(t) =0\} = \cap_{j=1}^{k}\{Z_j(t)=0\}$$ and we obtain that 
$$\mathbb{P}(Z(t) = 0|Z(0)=k) = \left(\mathbb{P}(Z(t) = 0|Z(0)=1)\right)^k.$$
Therefore,
\begin{multline*}
\mathbb{P}\left(Z\left(\frac{1+\delta}{|\alpha|}\log k\right)=0|Z(0)=k\right)\\ = 
\left(\mathbb{P}\left(Z\left(\frac{1+\delta}{|\alpha|}\log k\right)=0|Z(0)=1\right) \right)^k.
\end{multline*}

By Lemma \ref{sublem} we know that there exists $t_0>0$ such that for all $t>t_0$ we have both 
$$\mathbb{P}(Z(t)>0|Z(0)=1) <  e^{-|\alpha|(1-\delta/2)t}$$ and
$$\mathbb{P}(Z(t)>0|Z(0)=1) >  e^{-|\alpha|(1+  \delta)t}.$$
%For all $k$ satisfying $\log[k](1-\delta/2)/|\alpha|> t_0$. 
So, we obtain
\begin{equation*}
\begin{split}
\mathbb{P}\left(Z\left(\frac{1+\delta}{|\alpha|}\log k\right)=0|Z(0)=k\right) 
&=  \left(\mathbb{P}\left(Z\left(\frac{1+\delta}{|\alpha|}\log k\right)=0|Z(0)=1\right) \right)^k \\
&>  \left(1- e^{-(1-\delta/2)(1+\delta)\log k}\right)^k\\
&=  \left(1- k^{-(1 + \delta/2-\delta^2/2)}\right)^k\\
&= \left(1- \frac{k^{-(\delta-\delta^2)/2}}{k}\right)^k,
\end{split}
\end{equation*}
which converges to 1 for $\delta\in (0,1)$, by $(1-c k^{-1})^k \to e^{-c}$ as $k \to \infty$. Similarly, 
\begin{equation*}
\begin{split}
\mathbb{P}\left(Z\left(\frac{1-\delta}{|\alpha|}\log k\right)=0|Z(0)=k\right)
&=   \left(\mathbb{P}\left(Z\left(\frac{1-\delta}{|\alpha|}\log k\right)=0|Z(0)=1\right) \right)^k \\
&< \left(1- e^{-(1+\delta)(1-\delta)\log k}\right)^k\\
&=  \left(1- k^{-(1-\delta^2)}\right)^k\\  
&=  \left(1- \frac{k^{\delta^2}}{k}\right)^k,
\end{split}
\end{equation*}
which converges to 0, by $(1-c k^{-1})^k \to e^{-c}$ as $k \to \infty$ and the proof of the corollary is complete.
\end{proof}

\section{Heuristics}
\label{sec:heur}

In this subsection we provide some heuristic arguments for Theorem \ref{mainthm}.
If a large outbreak occurs, the epidemic can be subdivided into three phases, which can be roughly described as follows. Let $\epsilon>0$ be small.
In the initial phase the number of susceptible vertices decreases from $n-1$ to $(1-\epsilon) n$. In the intermediate phase the number of susceptible vertices decreases from $(1-\epsilon) n$ to $(q^*+\epsilon)n$. While the final stage of the epidemic lasts from the moment that the number of susceptible vertices is $(q^*+\epsilon)n$ until there are no more infectious vertices in the population. 

\subsection{The initial and intermediate phase of the epidemic}

The primary intuition for the initial phase is that $|I(t)|$, the number of infectious vertices at time $t$ and $|I(t)| + |R(t)|$, the number of vertices infected before time $t$ are well approximated by a branching process with mean measure given by  \eqref{mudtfirst} as long as $n^{-1} |S(t)|>1-\epsilon$ for $\epsilon>0$ but small.
The result of Lemma \ref{lemint2} then follows by applying Corollary \ref{bpsupercor} to $k= \epsilon n$.

To justify the use of \eqref{mudtfirst}, assume that the degree of a vertex uniformly taken from the population of size $n$ has exactly the same distribution function as $D$, then a newly infected vertex has degree distribution $\tilde{D}$, because of size biasing effects (see e.g.\ \cite{durrett2007random}). Apart from one (the infector) all of the neighbors of this newly infected vertex are susceptible with high probability. 
A newly infected vertex stays infectious for a random time distributed as $L$. Neighbors contact each other with intensity $\beta$, and if the contact is between a susceptible and an infectious vertex then the susceptible one becomes infected, which can be interpreted as being a child of his or her infector in the approximating branching process.
So in an approximating branching process we obtain expression \eqref{mudtfirst}: 
$$\mu'(dt) = \mathbb{E}[\tilde{D}-1] \beta e^{-\beta t} \mathbb{P}(L>t) dt,$$
where $\mathbb{E}[\tilde{D}-1]$ is the expected number of susceptible neighbors of a newly infected vertex,
$\beta e^{-\beta t}$ is the density of the time since infection of the first contact with a given neighbor, while $\mathbb{P}(L>t)$ is the probability that the vertex is still infectious at this time of first contact.
The Malthusian parameter of this approximating branching process is therefore given by (\ref{beginmalt}).

In the intermediate phase of the epidemic, $|S(t)|$, $|I(t)|$, and the number of infectious-susceptible neighbor pairs are all $\Theta(n)$. This implies that changes in $n^{-1}|S(t)|$, occur at an $\Theta(1)$ rate and the intermediate phase has duration $\Theta(1)$.

Our proof of Lemma \ref{lemint2}, however makes use of the fact that the initial and intermediate phase of the epidemic are, with some extra conditions on $D$ and $L$, studied by Barbour and Reinert in
\cite{barbour2013approximating}. They study the evolution of  $|S(T_0 +(\alpha')^{-1} (\frac{1}{2}\log[n] +t))|$, where $T_0= \inf\{t \geq 0;|S(t)| \leq n -\sqrt n\}$ is the time when $\sqrt{n}$ vertices are infected or recovered.
As a corollary of the results of \cite{barbour2013approximating} it follows that for $T'_{\gamma}(n)$ defined as in Lemma \ref{lemint2},  $T'_{\gamma}(n)  - (\alpha')^{-1} \log n$ converges in distribution as $n \to \infty$.
We avoid the extra conditions of \cite{barbour2013approximating} at the cost of only being able to study the convergence of  $T'_{\gamma}(n)/(\log n)$.

\subsection{The final phase of the epidemic}

In order to describe the end of the epidemic more work is required. We use that for $1-q^*-\gamma>0$ but small, the time interval  between $T'_{\gamma}(n)$ and $T^*(n)$, none of the quantities $n^{-1}|S^{(n)}(t)|$ and $n^{-1}|\mathcal{E}^{(n)}_S(t)|$, $n^{-1}|\mathcal{E}^{(n)}_I(t)|$, $n^{-1}|\mathcal{E}^{(n)}_R(t)|$ and $n^{-1}|\mathcal{E}^{(n)}_P(t)|$ (as defined in Section \ref{sec:construct}) change much. So, we assume that during the final stages of the epidemic, the environment of newly infected vertices is  more or less constant. That is, in our approximation we assume that the degree distribution and the fraction of the neighbors which are still susceptible of newly infected vertices are constant during this final phase. In particular, the degree distribution of a vertex infected during the final phase of the epidemic should be well approximated by the size biased degree distribution of ultimately susceptible vertices, while the fraction of susceptible neighbors of a newly infected vertex in this phase of the epidemic should be well approximated by the fraction of susceptible neighbors of ultimately susceptible vertices. We now find those quantities. 

Let $D^*$ be a random variable, such that the degree of a uniformly chosen ultimately susceptible vertex converges in distribution to $D^*$ as $n \to \infty$.
And let $p^*_{ss}$ be the probability that a given neighbor of an ultimately susceptible vertex is ultimately susceptible itself. Below we show that $p_{ss}^*$ is indeed well defined, and whether a given neighbor of an ultimately susceptible vertex is susceptible is independent of the degree of that vertex.

The end of the epidemic is then described by offspring measure
\begin{equation}\label{mudt3}
\mu^*(dt) = \mathbb{E}[\tilde{D}^*-1]p_{ss}^* \beta e^{-\beta t} \mathbb{P}(L>t) dt,
\end{equation}
which is derived in the same way as equation \eqref{mudtfirst} and where $\tilde{D}^*$ is the size-biased variant of $D^*$.
Below we then derive that 
$$\mathbb{E}[\tilde{D}^*-1]= \frac{\mathbb{E}[(\tilde{D}-1)(1-\psi+\psi \tilde{q}^*)^{\tilde{D}-1}]}{\tilde{q}^*}
\quad \mbox{and} \quad 
p_{ss}^* = \frac{\tilde{q}^{*}}{1-\psi+\psi \tilde{q}^{*}}.$$
Combining the above with \eqref{mudt3} and Corollary \ref{bpsubcor} then gives Lemma \ref{lemint3}.

\subsubsection{Degree distribution of ultimately susceptible individuals}
In this section we use ideas related to so-called susceptibility sets (see e.g.\ \cite{ball2001stochastic,ball2009threshold,ball2014epidemics}), although we do not define those set here. The arguments of this section are self-contained.

It is important to note that in the epidemic process the event that a vertex is ultimately recovered does not depend on its infectious period, even when infectious periods are random. This fact helps us to derive the probability of a vertex being ultimately susceptible and of degree $k$ (as in \cite{andersson1999epidemic}), which then yields the degree distribution of the ultimately susceptible.

Assume that a large outbreak occurs, which happens with the same probability as the survival of the branching process approximating the early spread of the epidemic, (see e.g.\ \cite{ball2009threshold}). Recall that there is only one initially infectious individual. So, as $n \to \infty,$ the probability that a uniformly chosen vertex is the initial infectious vertex converges to 0. 
Therefore, the probability that a uniformly chosen vertex $v$ is ultimately susceptible (i.e.\ it escapes the epidemic) is given by
\begin{equation}\label{eq:14}\xi=\sum_{k=0}^{\infty}\xi_{k}p_{k},
\end{equation}
where $\xi_{k}$ is probability that a vertex of degree $k$ does not acquire the infection by any of its neighbors until the end of the epidemic. We denote a neighbor of  vertex $v$ by $u$. Recall that $1-\psi$ is the probability that $u$ does not contact $v$ during its infectious period, if $u$ would become infected.  Let $\tilde{q}^{*}$ denote the probability that $u$
escapes the epidemic (we determine $\tilde{q}^{*}$ later). Then, $\xi_{k}$ is given by
\begin{equation}\label{eq:15}
\xi_{k}=\sum_{l=0}^{k}\dbinom{k}{l}(\tilde{q}^{*})^{k-l}(1-\tilde{q}^{*})^{l}(1-\psi)^{l}=(1-\psi+\psi \tilde{q}^{*} )^{k},
\end{equation}
because each of the neighbors of $v$ should be either not infected itself, or, if infected, not contact $v$ during the infectious period.
Using \eqref{eq:15} in \eqref{eq:14}, the probability for a uniformly chosen vertex to escape the epidemic is given by
\begin{equation}\label{eq:16}
\xi=\sum_{k=0}^{\infty}p_{k} (1-\psi+\psi \tilde{q}^{*} )^{k}.
\end{equation}

Similarly, the probability $\tilde{q}^{*}$ that $u$ escapes infection by all of its neighbors other than $v$  is given by
\begin{equation}\label{eq:17}\tilde{q}^{*}=\sum_{k=0}^{\infty}\tilde{\xi_{k}}\tilde{p}_{k},
\end{equation}
where $\tilde{\xi_{k}}$ is the probability
that a degree $k$ vertex does not acquire the infection from
$k-1$ given neighboring vertices and is defined as
\begin{equation}\label{eq:18}
\tilde{\xi }_{k}=\sum_{l=0}^{k-1}\dbinom{k-1}{l}(\tilde{q}^{*})^{k-l-1}(1-\tilde{q}^{*})^{l}(1-\psi)^{l}=(1-\psi+\psi \tilde{q}^{*})^{k-1}.
\end{equation}
Here we consider only $k-1$ of the $k$ neighbors of $u$ because we assume that $u$ does not acquire infection from $v$. Equations \eqref{eq:17} and \eqref{eq:18} give that  $\tilde{q}^{*}$ is a solution of
\begin{equation}\label{eq:19}
\tilde{q}^{*}= \sum_{k=0}^{\infty} \tilde{p}_{k}(1-\psi+\psi \tilde{q}^{*})^{k-1}.
\end{equation} 
In this heuristic argument we claim without proof that $\tilde{q}^{*}$ is the smallest solution of this identity.
So we have an implicit expression for the probability $\tilde{q}^{*}$ that neighbor $u$ escapes the epidemic. 
Moreover, from \eqref{eq:15} we obtain the probability that a vertex of degree $k$ escapes the epidemic. From this we deduce that the probability that an ultimately susceptible individual has degree $k$ (say $p_{k}^{*}$) is given by
\begin{equation}\label{eq:21}
p_{k}^{*}=\frac{\xi_{k}p_{k}}{\xi}=\frac{(1-\psi+\psi \tilde{q}^{*})^{k}p_{k}}{\sum_{j=1}^{\infty}p_j(1-\psi+\psi \tilde{q}^{*})^j },\end{equation}
where $\xi$ is a normalizing constant and is defined in \eqref{eq:16}. The size biased distribution of the ultimately susceptible individuals is given through
\begin{equation}\label{eq:22}\begin{split}
\tilde{p}_{k}^{*}&=\frac{kp_{k}^{*}}{\sum_{j=1}^{\infty}jp_{j}^{*}}=\frac{kp_k(1-\psi+\psi \tilde{q}^{*})^{k}}{\sum_{j=1}^{\infty}jp_{j}(1-\psi+\psi \tilde{q}^{*})^{j}},\\\\
&=\frac{\tilde{p}_k(1-\psi+\psi \tilde{q}^{*})^{k-1}}{\sum_{j=1}^{\infty}\tilde{p}_{j}(1-\psi+\psi \tilde{q}^{*})^{j-1}}=\frac{\tilde{p}_k(1-\psi+\psi \tilde{q}^{*})^{k-1}}{\tilde{q}^{*}}.
\end{split}\end{equation}

\subsubsection{Fraction of ultimately susceptible neighbors of an ultimately susceptible vertex}
Let $v$ be an arbitrary vertex of degree $k$ and $u$ one of its neighbors. We compute the fraction of
neighbors of the ultimately susceptible which are also ultimately susceptible as the following conditional probability:
\begin{equation}\label{eq:25}\begin{split}&
p^*_{ss}(k)=\mathbb{P}\text{($u$ is ultimately susceptible $|$ $v$ is ultimately susceptible)},\\
&=\frac{\mathbb{P}\text{($v$ and $u$ are ultimately susceptible)}}{\mathbb{P}\text{($v$ is ultimately susceptible)}}=\frac{\tilde{q}^{*}\, \tilde{\xi}_{k}}{\xi_{k}}=\frac{\tilde{q}^{*}}{1-\psi+\psi \tilde{q}^{*}}.
\end{split}\end{equation}
Note that this probability is independent of the degree $k$ of vertex $v$ and therefore we can write $p^*_{ss}(k)=p^*_{ss}$.

To understand \eqref{eq:25}, recall that $ \tilde{q}^{*}$ is the probability that the initially susceptible neighbor $u$ escapes the infection from all its neighboring vertices, apart from possibly $v$, $\tilde{\xi}_k$ is the probability that $v$ escapes infection from all of its neighbors, apart from possibly $u$, and $\xi_k$ is the unconditional probability that vertex $v$ does not acquire the infection until the end of the epidemic.

\section{Proof of Lemma \ref{lemint2}}
\label{secinit}

We split the proof in two lemmas which trivially imply Lemma \ref{lemint2}.
\begin{lemma}
\label{lemint4}
Assume that A1-A3 of Assumptions \ref{mainassump} hold, then 
$$\frac{T'_{\gamma}(n)}{\log n} \ind(\mathcal{M}^{(n)})  \leq \frac{1}{\alpha'}+\delta \qquad \mbox{w.h.p.\ for every $\delta>0$.}$$
\end{lemma}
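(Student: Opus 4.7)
I will upper-bound $T'_\gamma(n)$ by coupling the epidemic with a supercritical branching process that grows slightly slower than the natural approximation of mean measure $\mu'(dt)$ from \eqref{mudtfirst}, and then applying Corollary~\ref{bpsupercor}. It is enough to prove the bound for some small $\gamma_0\in(0,\gamma)$, because once $\gamma_0 n$ vertices have been infected, the rate of new infections is $\Theta(n)$ w.h.p., so the passage from $\gamma_0 n$ to $\gamma n$ infected takes $O(1)=o(\log n)$ additional time; we are therefore free to choose $\gamma_0$ as small as convenient.

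For the core step, fix small $\kappa,\gamma_0>0$ and let $\tilde Z$ be the (two-stage) continuous-time branching process with typical mean offspring measure $(1-\kappa)\mu'(dt)$; its Malthusian parameter $\tilde\alpha$ is continuous at $\kappa=0$ with $\tilde\alpha\to\alpha'$ as $\kappa\downarrow 0$, so for $\kappa$ small enough $\log(\gamma_0 n)/\tilde\alpha\leq (1/\alpha'+\delta/2)\log n$ for all large $n$. Using the step-by-step construction of Section~\ref{sec:construct}, I would build a coupling so that $|I^{(n)}(t)|+|R^{(n)}(t)|\geq \tilde Z^{tot}(t)$ up to the stopping time $T'_{\gamma_0}(n)$. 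The key input is that, while fewer than $\gamma_0 n$ vertices have been infected, the infected vertices have consumed at most a fraction $\gamma_0\mathbb{E}[\tilde D]/\mathbb{E}[D]\cdot(1+o(1))$ of all half-edges (by (A2)), so each newly revealed half-edge lands on a susceptible vertex with probability at least $1-\kappa$, provided $\gamma_0$ is chosen small relative to $\kappa$. Since $\mathcal{M}^{(n)}$ coincides asymptotically with non-extinction of $\tilde Z$, Corollary~\ref{bpsupercor} applied with $k=\lfloor\gamma_0 n\rfloor$ then yields $T'_{\gamma_0}(n)\leq(1/\alpha'+\delta/2)\log n$ w.h.p.\ on $\mathcal{M}^{(n)}$.

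The main obstacle is the rigorous construction of this stochastic domination, because the random half-edge pairings induce correlations between distinct infection events and because $\tilde Z$ evolves in continuous time. The step-by-step revelation in Section~\ref{sec:construct}, combined with the uniform lower bound on the susceptible fraction of unused half-edges, reduces the domination to a standard thinning argument applied independently at each birth event of $\tilde Z$; care is needed to accommodate the size-biasing of the initial infective (by choosing the ancestor's reproduction law in the two-stage $\tilde Z$ appropriately) and to verify that a major outbreak in the epidemic implies, up to an asymptotically null event, non-extinction of the slowed process $\tilde Z$. Adding the $(1/\alpha'+\delta/2)\log n$ contribution of the branching-process phase and the $o(\log n)$ contribution of the bridge from $\gamma_0 n$ to $\gamma n$ infected then gives $T'_\gamma(n)/\log n\cdot\ind(\mathcal{M}^{(n)})\leq 1/\alpha' + \delta$ w.h.p., as required.
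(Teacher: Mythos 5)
Your route (a direct lower-bounding branching process plus Corollary \ref{bpsupercor}) is genuinely different from the paper's proof of Lemma \ref{lemint4}, which instead truncates the degrees at a level $K$ and the infectious period at $L_{\text{max}}$, invokes Barbour and Reinert \cite[Thm.~3.3]{barbour2013approximating} for the truncated model (whose Malthusian parameter converges to $\alpha'$ as $K,L_{\text{max}}\to\infty$), and deduces the bound for the original, faster model. As written, however, your proposal has gaps that are not merely technical.

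First, the survival issue: conditioning on $\mathcal{M}^{(n)}$ does not give you survival of the slowed process $\tilde Z$. Since $\tilde Z$ is dominated by the epidemic, its extinction probability is strictly larger than that of the epidemic's natural approximating process, so $\mathbb{P}(\tilde Z \mbox{ dies out}\mid\mathcal{M}^{(n)})$ is bounded away from $0$; on that event Corollary \ref{bpsupercor} yields nothing. You would need a restart or sprinkling argument (e.g.\ run the dominated process from each of the first $\log n$ infecteds and show that w.h.p.\ at least one copy survives), which you do not supply. Second, your depletion bound $\gamma_0\mathbb{E}[\tilde D]/\mathbb{E}[D]$ is vacuous when $\mathbb{E}[D^2]=\infty$, a case the lemma explicitly covers (then $\alpha'=\infty$ and Corollary \ref{bpsupercor} does not even apply to your $\tilde Z$ without a further degree truncation); and even when $\mathbb{E}[\tilde D]<\infty$, controlling the probability that a revealed half-edge lands on a susceptible vertex is not enough. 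The offspring count of a newly infected vertex is governed by the size-biased degree of the \emph{remaining} susceptible pool, and high-degree vertices are depleted preferentially, so you must dominate the whole depleted degree distribution from below, uniformly over the history of the first $\gamma_0 n$ infections — the kind of uniform control the paper carries out only for the final phase (Lemma \ref{eeslemma2} and the construction of $\tilde{D}^{-}(\epsilon)$). Third, the ``bridge'' from $\gamma_0 n$ to $\gamma n$ infecteds is precisely the statement that the intermediate phase has duration $O(1)$; this is a theorem (it is what the paper imports from \cite{barbour2013approximating}), not an observation: at time $T'_{\gamma_0}(n)$ you have no a priori guarantee that $|I^{(n)}|=\Theta(n)$ rather than merely $|I^{(n)}|+|R^{(n)}|=\Theta(n)$, so the claim that new infections then occur at rate $\Theta(n)$ is unjustified. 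Since the lemma must hold for $\gamma$ arbitrarily close to $1-q^*$, this step cannot be waved away.
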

\begin{lemma}
\label{lemint5} Assume that A1-A3 of Assumptions \ref{mainassump} hold. Further assume that $\mathbb{E}[D^2]<\infty$, then
$$\frac{T'_{\gamma}(n)}{\log n}  \geq \frac{1}{\alpha'+\delta} \qquad \mbox{w.h.p.\ for every $\delta>0$.}$$
\end{lemma}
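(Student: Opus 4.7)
The plan is to dominate the infection process by a continuous-time branching process (BP) built on top of the construction in Section~\ref{sec:construct}, and to derive the lower bound on $T'_\gamma(n)$ via Markov's inequality applied to the expected BP size. The hypothesis $\mathbb{E}[D^2]<\infty$ enters exactly to make the offspring mean of the dominating BP converge to $\mathbb{E}[\tilde D-1]$, so that its Malthusian parameter can be bounded by $\alpha'+\delta/2$ for $n$ large.

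\textbf{Step 1 (Coupling).} Extend the construction of Section~\ref{sec:construct} to a BP $\mathcal Z^{(n)}$: at every firing of an infectious half-edge $(u,j)$ at time $\sigma(u)+\tau_{u,j}$, unconditionally spawn a new particle using the next unused pair $(x_{k+1},x'_{k+1})\in\mathbf x^{(n)}$ without rejecting any collisions. The new particle inherits the $\tilde D^{(n)}-1$ remaining half-edges of the vertex owning $x_{k+1}$, each of which fires at an independent copy of $\min(\mathrm{Exp}(\beta),L)$ after birth; the root has $D^{(n)}$ such half-edges. Since collisions (with already paired, infectious or recovered half-edges) only suppress real infections, one obtains the stochastic domination $|S^{(n)}(0)\setminus S^{(n)}(t)|\leq Z^{tot}_n(t)$ for all $t\geq 0$, where $Z^{tot}_n(t)$ denotes the total number of particles born in $\mathcal Z^{(n)}$ by time $t$.

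\textbf{Step 2 (Uniform first-moment bound).} The non-root mean offspring measure of $\mathcal Z^{(n)}$ is $\mu'_n(dt)=\mathbb{E}[\tilde D^{(n)}-1]\,\beta e^{-\beta t}\mathbb{P}(L>t)\,dt$, whose Laplace transform equals $g'_n(x)=(\mathbb{E}[\tilde D^{(n)}-1]/\mathbb{E}[\tilde D-1])\,g'(x)$. By~\eqref{Dtildeconv} together with $\mathbb{E}[D^2]<\infty$, $\mathbb{E}[\tilde D^{(n)}]\to \mathbb{E}[\tilde D]<\infty$, so $g'_n\to g'$ pointwise on $(-\beta,\infty)$. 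Fix $\delta>0$ and set $\alpha''=\alpha'+\delta/2$. Monotonicity of $g'$ and $g'(\alpha')=1$ give $g'(\alpha'')<1$, hence $g'_n(\alpha'')<1$ for all $n$ large. Standard Laplace-transform arguments for the first moment of a supercritical BP then produce a constant $C>0$, independent of $n$ for all $n$ large, such that
\begin{equation*}
\mathbb{E}\bigl[Z^{tot}_n(t)\bigr]\leq C e^{\alpha'' t},\qquad t\geq 0,
\end{equation*}
where the prefactor is finite because $\mathbb{E}[D^{(n)}]\to \mathbb{E}[D]<\infty$ by (A2) bounds the root contribution.

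\textbf{Step 3 (Markov) and main obstacle.} Setting $t_n=\log n/(\alpha'+\delta)$, Steps~1--2 combined with Markov's inequality yield
\begin{equation*}
\mathbb{P}\bigl(T'_\gamma(n)<t_n\bigr)\leq \mathbb{P}\bigl(Z^{tot}_n(t_n)>\gamma n\bigr)\leq \frac{C e^{\alpha'' t_n}}{\gamma n}=\frac{C}{\gamma}\,n^{(\alpha'+\delta/2)/(\alpha'+\delta)-1}\to 0,
\end{equation*}
since the exponent is strictly negative. The main technical obstacle is the uniform-in-$n$ first-moment bound of Step~2: because $\mathcal Z^{(n)}$ is a sequence of BPs whose offspring measure varies with $n$, Corollary~\ref{bpsupercor} cannot be applied directly; the remedy is to freeze the exponent at the fixed value $\alpha''>\alpha'$, where $g'_n(\alpha'')$ stays bounded strictly below $1$ as $n\to\infty$. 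The assumption $\mathbb{E}[D^2]<\infty$ is used precisely here, to guarantee that $\mathbb{E}[\tilde D^{(n)}]$ is bounded and converges to $\mathbb{E}[\tilde D]$. Neither a tail condition on $L$ nor assumption (A4) is needed for this lemma.
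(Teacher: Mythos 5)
Your overall strategy (dominate the early epidemic by a supercritical CMJ process whose Malthusian parameter is at most $\alpha'+\delta/2$ for large $n$, then bound the number of infections by time $\log n/(\alpha'+\delta)$) is the same as the paper's, and your Steps 2--3 are fine and in fact a little more elementary than the paper's appeal to Lemma \ref{bpconv}: the renewal bound $\mathbb{E}[Z^{tot}_n(t)]\leq e^{\alpha'' t}/(1-g'_n(\alpha''))$ plus Markov is a clean way to finish. The problem is Step 1: the claimed stochastic domination by a branching process with offspring count $\tilde D^{(n)}-1$ is not correct. When an infectious half-edge fires, the target is uniform among the \emph{unpaired} half-edges, so the probability that the newly infected vertex has degree at least $k$ is $\sum_{v\in S^{(n)}(t)}d_v\ind(d_v\geq k)\big/(\ell(n)-|\mathcal{E}^{(n)}_P(t)|-1)$, whereas $\mathbb{P}(\tilde D^{(n)}\geq k)=\sum_{v}d_v\ind(d_v\geq k)/\ell(n)$. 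The numerator shrinks but so does the denominator, and in the worst case (only low-degree vertices have been removed from $S^{(n)}(t)$ while a positive fraction $c$ of half-edges has been paired) the former exceeds the latter by a factor close to $(1-c)^{-1}$. So ``collisions only suppress infections'' is true for whether an infection occurs, but the \emph{conditional degree given that an infection occurs} can strictly dominate $\tilde D^{(n)}$, and no coupling along the shared sequence $\mathbf{x}^{(n)}$ repairs this (the two processes also consume the sequence at different rates because of the retries on paired half-edges). Worse, ruling out that a positive fraction of half-edges is paired by time $t_n$ is exactly what the lemma is trying to prove, so the argument as written is circular.

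The paper's proof closes this gap with a bootstrapping device that you would need to add. Fix $\epsilon_1>0$ and run the domination only up to the time $t_0$ at which $i_0=\lfloor\epsilon_1 n\rfloor$ vertices have been explored; before $t_0$ the degree of a newly infected vertex is dominated by the size-biased degree over the $n-i_0$ \emph{highest-degree} vertices, whose mean is at most $\ell_2(n)/(\ell(n)(1-\epsilon_1))=(1-\epsilon_1)^{-1}\mathbb{E}[\tilde D^{(n)}]$ (this is where $\mathbb{E}[D^2]<\infty$, i.e.\ $\ell_2(n)=O(n)$, is used). Choosing $\epsilon_1$ and $n$ so that this inflated mean is below $\mathbb{E}[\tilde D]+\delta_1$, the dominating branching process has Malthusian parameter $\alpha_1(\delta_1)<\alpha'+\delta/2$, one shows it has $o(n)$ births by time $t_n$, and since $i_0=\Theta(n)$ this self-consistently implies $t_0>t_n$ w.h.p., so the domination was valid on all of $[0,t_n]$. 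With this modification your first-moment/Markov computation in Steps 2--3 goes through verbatim with $\mu'_n$ replaced by the inflated measure.
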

\noindent Note that $\mathbb{E}[D^2]=\infty$ implies $\alpha' = \infty$ and the equivalent of Lemma \ref{lemint5} is meaningless. Lemma \ref{lemint4} still holds in that case.

\begin{proof}[Proof of Lemma \ref{lemint4}] 
Assume first that $D^{(n)}$ has uniformly bounded support, that is, there exist $K>0$ such that 
$\mathbb{P}(D^{(n)} >K) =0$ for all $n \in \mathbb{N}$. 
Furthermore, assume that there exists $L_{\text{max}} \in (0,\infty)$ such that  $\mathbb{P}(L>L_{\text{max}})=0$, i.e.\ we assume that $L$ has bounded support. 
Under those assumptions the conditions of \cite[Thm.~3.3]{barbour2013approximating} are satisfied. 
Note that in the notation of \cite{barbour2013approximating}, $\lambda$ is the Malthusian parameter ($\alpha'$ in our notation) and $N$ is the population size ($n$ in our notation). It is easily deduced from equation (3.11) and the definition of $\tau_N$ on page 27 of \cite{barbour2013approximating} that $\tau_N/[\log N] \parrow  1/(2 \lambda)$ on $\mathcal{M}^{(n)}$.
Finally, the expression  $\hat{s}_l(u)$ in  \cite{barbour2013approximating} is independent of $N$ for all $l \in \{1,2,\cdots, K\}$.
Translating the notation of  \cite[Thm.~3.3]{barbour2013approximating} to our notation we obtain as an immediate corollary  that for every $\gamma' \in (0,1-q^*)$ and every  $\delta>0$, $$\frac{1}{n} \left|S^{(n)} \left(\left((\alpha')^{-1}+ \delta\right)\log n\right)\right|\ind(\mathcal{M}^{(n)}) < q^* +\gamma' \qquad \mbox{w.h.p.}$$  

To obtain the results without the extra conditions, let $K = K(\delta)$ be a large constant satisfying some properties specified later. Mark (before the pairing) all half-edges belonging to 
vertices with degree strictly larger than $K$.
By assumptions (A1) and (A2) one can make the fraction of half-edges that are marked to be arbitrary small by choosing $K$ and $n$ large enough.  
The next step is to pair all half-edges (ignoring whether they are marked and unmarked) uniformly at random as before. 
Then delete all edges which contain at least one marked half-edge. 
If a fraction $\delta_1 = \delta_1(K)$ of the half-edges is marked then the remaining degree distribution of the graph is a Mixed Binomial distribution with random ``number of trials parameter'' $D^{(n)} \ind(D^{(n)} \leq K)$ and ``probability parameter'' $1-\delta_1$. 
%(The domination is because we ignore that some marked half-edges will be paired with other marked half-edges).
Let $D^{(n)}_K$ be distributed as this Mixed Binomial random variable, and $\tilde{D}^{(n)}_K$ be the size-biased variant of $D^{(n)}_K$.
It follows immediately from assumptions (A1) and (A2) that 
$$\lim_{K \to \infty} \lim_{n \to \infty} \mathbb{E}[D^{(n)}_K] = \mathbb{E}[D].$$
Furthermore, for both $\mathbb{E}[D^2]= \infty$ and $\mathbb{E}[D^2]<\infty$, (A1) and (A3) imply
that 
$$\lim_{K \to \infty} \lim_{n \to \infty} \mathbb{E}[(D^{(n)}_K)^2] = \mathbb{E}[D^2].$$
In particular, we obtain that $$\lim_{K \to \infty} \lim_{n \to \infty} \mathbb{E}[\tilde{D}^{(n)}_K -1] = \mathbb{E}[\tilde{D}-1].$$

In addition we consider an epidemic on the newly created (thinned) graph with infectious period distribution $$L' = L \ind(L < L_{\text{max}}) + L_{\text{max}} \ind(L \geq L_{\text{max}}).$$ So, in the new model we have deleted some edges and shortened some infectious periods, which make that the epidemic spreads faster in the original model.

For this new epidemic  we deduce from $(\ref{beginmalt})$ that the Malthusian parameter is the $x \in \mathbb{R}$ satisfying   
\begin{equation}
\label{beginmalt2}
\frac{1}{\mathbb{E}[\tilde{D}^{(n)}_K -1]}= \int_0^{L_{\text{max}}} e^{-x t}  \beta e^{-\beta t} \mathbb{P}(L>t) dt =f(x,L_{\text{max}}).
\end{equation}
We define $f(x,\infty)$ in the obvious way. 
Note that $f(x,L_{\text{max}})$ is continuous and decreasing in $x$ and continuous and increasing in $L_{\text{max}}$. Furthermore, if $\psi \mathbb{E}[\tilde{D}-1]>1$, then $$f(0,\infty) = \psi > \frac{1}{\mathbb{E}[\tilde{D}-1]}.$$   
While $\lim_{x \to \infty}  f(x,L_{\text{max}}) =0$ for all $L_{\text{max}} \in (0,\infty]$. It follows 
that the solution of (\ref{beginmalt2}) converges to $\alpha'$ as $K \to \infty$ and $L_{\text{max}} \to \infty$.
In particular, for every $\delta>0$, there exists $K_0<\infty$ and $L_0<\infty$ such that for all $K>K_0$ and $L_{\text{max}}>L_0$, the $x \in \mathbb{R}$ solving  
(\ref{beginmalt2}) satisfies $1/x < 1/\alpha'+\delta/2$.

So, by choosing $L_{\text{max}}$ and $K$ large enough (but finite), we are in the realm of \cite[Thm.~3.3]{barbour2013approximating} and for the corresponding model we obtain that for 
every $\gamma' \in (0,1-q^*)$ and $\delta>0$ with high probability it holds that, $$\frac{1}{n} \left|S^{(n)} \left(\left(\frac{1}{\alpha'}+ \delta/2+\delta/2\right)\log n\right)\right| < q^* +\gamma',$$
which finishes the proof of Lemma \ref{lemint4}.
\end{proof}

\begin{proof}[Proof of Lemma \ref{lemint5}]
In order to prove the lemma we prove the following stronger statement: 
The number of vertices affected by the epidemic up to time $\frac{\log n}{\alpha' + \delta}$ satisfies $|n-S^{(n)}(\frac{\log n}{\alpha' + \delta})|= o(n)$ with high probability  
for all $\delta>0$.

Now, for $\delta_1>0$, let $\alpha_1 = \alpha_1(\delta_1)$ satisfy   
\begin{equation}
\label{beginmalt3}
\frac{1}{\mathbb{E}[\tilde{D}-1]+\delta_1}= \int_0^{\infty} e^{-\alpha_1 t}  \beta e^{-\beta t} \mathbb{P}(L>t) dt.
\end{equation}
As before, because $R_0>1$, we know that $\alpha_1$ exists and is positive for all $\delta_1 \geq 0$ and is continuous increasing in $\delta_1$ on $[0,\infty)$. 
In particular, for every $\delta>0$, we can and do choose $\delta_1>0$ such that $\alpha_1(\delta_1) < \alpha'+ \delta/2$.

We use the notation of Section \ref{sec:construct}, where the vertices in $V^{(n)}$ are labeled such that the degree sequence $d_1,d_2,\cdots, d_n$ is non-decreasing.
We also use that $\ell_2(n) =O(n)$ %$\ell(n) = \sum_{v=1}^n d_v$ and $\ell_2(n) = \sum_{v=1}^n (d_v)^2$, which 
by assumption (A3) and the assumption $\mathbb{E}[D^2]<\infty$ (or equivalently by $\mathbb{E}[\tilde{D}]<\infty$).

Let $\epsilon_1 \in (0,1)$ be a number to be specified later. For $i \leq \epsilon_1 n$ define
 the random variable $D'^{(n)}(\mathbf{x};i)$  through 
$$\mathbb{P}(D'^{(n)}(\mathbf{x};i) = k) = \frac{\sum_{v=1}^n \ind(d_v=k) \ind(v \not\in\{x_0,x_1,\cdots x_i\})}{\sum_{v=1}^n \ind(v \not\in\{x_0,x_1,\cdots x_i\})}.$$
That is, $D'^{(n)}(\mathbf{x};i)$ is the degree distribution of the vertices not chosen in the first $i$ elements of $\mathbf{x}$.

Note that for all $i \leq i_0 = \lfloor \epsilon_1 n \rfloor$, the random variable $D'^{(n)}(\mathbf{x};i)$ is stochastically dominated by $D''^{(n)}(\epsilon_1)$, which is defined through $$\mathbb{P}(D''^{(n)}(\epsilon_1) = k) = \frac{\sum_{v=i_0+1}^n \ind(d_v=k)}{n-i_0}.$$
So, $D''^{(n)}(\epsilon_1)$ is the degree distribution of a vertex chosen uniformly from the $n-i_0$ vertices with highest degrees, which is stochastically increasing in $i_0$.
Let $\tilde{D}''^{(n)}(\epsilon_1)$ be the size biased variant of $D''^{(n)}(\epsilon_1)$.
It follows that
$$\mathbb{E}[\tilde{D}''^{(n)}(\epsilon_1)]= \frac{\sum_{v=i_0+1}^n (d_v)^2}{\sum_{v=i_0+1}^n d_v}.$$
Observe that $\sum_{v=i_0+1}^n (d_v)^2 \leq \sum_{v=1}^n (d_v)^2 = \ell_2(n)$, while
$$\sum_{v=i_0+1}^n d_v = \sum_{v=1}^n d_v- \sum_{v=1}^{i_0} d_v \geq \ell(n)-i_0 \mathbb{E}[D^{(n)}] = \ell(n)-i_0 \frac{\ell(n)}{n} \geq \ell(n)(1-\epsilon_1).$$ So, 
$$\mathbb{E}[\tilde{D}''^{(n)}(\epsilon_1)] \leq \frac{\ell_2(n)}{\ell(n)(1-\epsilon_1)} = \frac{1}{1-\epsilon_1}\mathbb{E}[\tilde{D}^{(n)}].$$

Note that (apart from possibly $x_0$), in the construction of $\{\mathcal{X}^{(n)}(t);t \geq 0\}$ the degree of a vertex added to $V^{(n)} \setminus S^{(n)}(t)$ is stochastically smaller than $\tilde{D}''^{(n)}(\epsilon_1)$, as long as $t<t_0$, where $t_0= t_0(\epsilon_1) = \max\{t>0;|\mathbf{x}^{(n)}(t)| \leq i_0\}$. That is up to we add the $i_0$-th vertex to $\{\mathcal{X}^{(n)}(t);t \geq 0\}$ the number of vertices in $V^{(n)} \setminus S^{(n)}(t)$ is less than the number of particles in a branching process with offspring 
measure $$\mu''^{(n)}(dt;i_0) = \mathbb{E}[\tilde{D}''^{(n)}(\epsilon_1)] \beta e^{-\beta t} \mathbb{P}(L>t)dt.$$
Denote the number of particles in this branching process at time $t$ ($t \geq 0$) by $Z''^{(n)}(t)$.

Because $\mathbb{E}[\tilde{D}^{(n)}] \to \mathbb{E}[\tilde{D}]$ as $n \to \infty$, we have that for every $\delta_1>0$ we can choose $\epsilon_1= \epsilon_1(\delta_1)>0$ and $n_0= n_0(\delta_1) \in \mathbb{N}$, such that $\mathbb{E}[\tilde{D}''^{(n)}(\epsilon_1)]< \mathbb{E}[\tilde{D}] + \delta_1$ for all $n > n_0$.

So for $\epsilon_1= \epsilon_1(\delta_1)$ and $n_0 = n_0(\delta_1)$ as above,
$\{Z''^{(n)}(t); t \in (0,t_0)\}$ is dominated by a branching process 
$\{Z''(t); t \in (0,t_0)\}$ with offspring measure 
$$\mu''(dt) = (\mathbb{E}[\tilde{D}]+ \delta_1) \beta e^{-\beta t} \mathbb{P}(L>t)dt.$$
This branching process has a Malthusian parameter $\alpha_1(\delta_1)$, which satisfies equation (\ref{beginmalt3}) and is less than $\alpha'+ \delta/2$.

Now observe that by Lemma \ref{bpconv}, with high probability 
$$Z''\left(\frac{\log n}{\alpha' + \delta}\right)
%\left|V^{(n)}%\left(\frac{\log n}{\alpha' + \delta}\right)
%\setminus S^{(n)}\left(\frac{\log n}{\alpha' + \delta}\right)\right|
= 
O\left(e^{(\alpha'+ \frac{\delta}{2})\frac{\log n}{\alpha' + \delta}}\right) = O\left(n^{\frac{\alpha'+ \delta/2}{\alpha' + \delta}}\right) = o(n).$$

Since $i_0 = \theta(n)$ and the number of individuals infected before time $t$ is stochastically less than
$Z''(\frac{\log n}{\alpha' + \delta})$, we obtain that  $n-|S(\frac{\log n}{\alpha' + \delta})| =o(n)$ with high probability for all $\delta>0$.
\end{proof}

\section{The final stage of the epidemic}
\label{secext}

\subsection{Proof of Lemma \ref{subend}}
Before we prove Lemma \ref{lemint3}, we first provide the proof of Lemma \ref{subend}. To this end 
we use the following almost trivial observation.

Let $D_x^*$ be a random variable with distribution defined through 
\begin{equation}
\label{Dxstar}
p_{k}^{*}(x) = \mathbb{P}(D_x^*=k) = \frac{p_k x^k}{\sum_{\ell=0}^{\infty}p_{\ell} x^{\ell}}
\end{equation}
for $k \geq 0$. Here $p_k= \mathbb{P}(D=k)$ as in Section \ref{sec:conf}.
\begin{claim}\label{dalem}
For $x \in (0,1)$ all moments of the random variable $D_x^*$
are finite, regardless of the distribution of $D$. 
\end{claim}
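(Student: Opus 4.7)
The plan is to reduce the claim to a standard fact about convergent power series. Write $G_D(x) = \sum_{\ell=0}^{\infty} p_{\ell} x^{\ell}$ for the probability generating function of $D$. Then for any $m \in \mathbb{N}_0$ and any $x \in (0,1)$,
\begin{equation*}
\mathbb{E}\bigl[(D_x^*)^m\bigr] \;=\; \frac{\sum_{k=0}^{\infty} k^m\, p_k\, x^k}{G_D(x)}.
\end{equation*}
I would first observe that the denominator $G_D(x)$ is strictly positive: since $D$ takes values in $\mathbb{N}$ there exists some $k_0$ with $p_{k_0}>0$, and hence $G_D(x)\ge p_{k_0} x^{k_0}>0$ for every $x\in (0,1)$. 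This step has nothing to do with the tail of $D$.

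Next I would bound the numerator by using only that $p_k \le 1$:
\begin{equation*}
\sum_{k=0}^{\infty} k^m\, p_k\, x^k \;\le\; \sum_{k=0}^{\infty} k^m\, x^k,
\end{equation*}
and the right hand side is finite for every $x\in (0,1)$ (it is the $m$-fold differentiation, up to constants, of the geometric series). Dividing by $G_D(x)>0$ then gives finiteness of the $m$-th moment.

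If one prefers a single-stroke argument that delivers all moments simultaneously, I would show the stronger statement that the moment generating function of $D_x^*$ is finite in a neighbourhood of $0$. For any $s>0$ with $e^s x < 1$, i.e.\ $s \in \bigl(0,\log(1/x)\bigr)$, one has
\begin{equation*}
\mathbb{E}\bigl[e^{s D_x^*}\bigr] \;=\; \frac{1}{G_D(x)} \sum_{k=0}^{\infty} p_k (e^s x)^k \;\le\; \frac{G_D(e^s x)}{G_D(x)} \;<\; \infty,
\end{equation*}
since the probability generating function of $D$ is finite on $[0,1]$ and in particular at $e^s x\in (x,1)$. Finiteness of the MGF in a neighbourhood of the origin implies that all moments of $D_x^*$ are finite.

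There is no real obstacle here; the only thing to be a little careful about is to note that the bound $p_k\le 1$ (rather than any tail assumption on $D$) is what decouples the argument from the distribution of $D$, and that $x<1$ strictly is what makes both $\sum_k k^m x^k$ and $G_D(e^s x)$ converge. I would keep the write-up to a few lines, as above.
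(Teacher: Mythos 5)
Your proposal is correct and follows essentially the same route as the paper: bound the numerator of $\mathbb{E}[(D_x^*)^j]=\sum_k k^j p_k x^k/\sum_\ell p_\ell x^\ell$ using only that $x<1$ (the paper uses $\sum_k k^j p_k x^k \le \max_k k^j x^k$, you use $p_k\le 1$ to get $\sum_k k^j x^k$ — both finite for the same reason), and observe the denominator is strictly positive. The MGF variant is a pleasant extra but not a genuinely different argument.
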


\begin{proof}Consider the $j$th moment of $D_x^*$
\begin{equation*}\label{eq:23}\begin{split}
\mathbb{E}[(D_x^*)^j]=\sum_{k=1}^{\infty}k^{j}p_{k}^{*}(x)=\sum_{k=1}^{\infty}k^{j}\frac{p_k x^k}{\sum_{l=0}^{\infty}p_l x^l} \leq \frac{\max_{k \in \mathbb{N}} k^{j}x^k}{\sum_{l=0}^{\infty}p_l x^l}.
\end{split}\end{equation*}
Because the numerator is finite and the denominator strictly positive the claim follows.
\end{proof}

%\begin{proof} We use the D'Alembert ratio test  for convergence of series \cite[p.~65]{rudin1964principles}. :
%\begin{equation*}\label{eq:23}\begin{splited}
%\mathbb{E}[(D_x^*)^j]=\sum_{k=1}^{\infty}k^{j}p_{k}^{*}(x)=\sum_{k=1}^{\infty}k^{j}\frac{p_k x^k}{\sum_{l=0}^{\infty}p_l x^l} = \frac{\mathbb{E}[D^j x^D]}{\mathbb{E}[x^D]}.
%\end{splited}\end{equation*}
%To apply the ratio test we consider the limit of two subsequent terms
%\begin{equation*}\label{eq:24}\begin{splited}
%\lim_{k \to \infty}\frac{(k+1)^{j}x^{k+1}}{k^{j}x^{k}}&=x \in (0,1),\\
%\end{splited}\end{equation*}
%by assumption. This  implies that the sum $\sum_{k=1}^{\infty}k^{j}x^{k}$ converges. 
%It then follows that $$\sum_{k=1}^{\infty}k^{j}p_{k}^{*}(x) = \sum_{k=1}^{\infty}k^{j}\frac{p_k x^k}{\sum_{\ell=0}^{\infty}p_{\ell} x^{\ell}} \leq  \sum_{k=1}^{\infty}k^{j}\frac{ x^k}{\sum_{{\ell}=0}^{\infty}p_{\ell} x^{\ell}}<\infty.$$
%\end{proof}
In particular, note that 
\begin{multline*}
\mathbb{E}[\tilde{D}^*] = \frac{\mathbb{E}[\tilde{D}Q^{\tilde{D}-1}]}{\tilde{q}^*} =  \frac{\mathbb{E}[D^2 Q^{D-1}]}{\mathbb{E}[D] \tilde{q}^*}\\
=
\frac{\mathbb{E}[D^2(1-\psi+\psi \tilde{q}^*)^{D}]}{\mathbb{E}[Q^{D}]}\frac{\mathbb{E}[Q^{D}]}{\mathbb{E}[D] \tilde{q}^* Q}
= \mathbb{E}[(D^*_Q)^2] \frac{\mathbb{E}[Q^{D-1}]}{\mathbb{E}[D] \tilde{q}^*}<\infty,
\end{multline*}
where
\begin{equation}
\label{Qdef}
Q= 1-\psi+\psi \tilde{q}^*.
\end{equation}
For notational convenience we continue using $Q$ below.  

\begin{proof}[Proof of Lemma \ref{subend}]
Recall the definition of $\alpha^{\dagger}$ from \eqref{alphadagger}. By Claim \ref{dalem} $\mu^*(dt)$ is well defined.
In Remark \ref{Remadag} we argued that $\alpha^* \geq \alpha^{\dagger}$; $\alpha^{\dagger}<0$ and
the function $g^*(x)$ is continuous and strictly decreasing for $x >\alpha^{\dagger}$. 
This implies that  $g^*(x)$ is continuous and strictly decreasing on the non empty interval $(-\alpha^{\dagger},0)$ and $\alpha^*\in [-\alpha^{\dagger},0)$ if and only if $$R_0^* = \int_0^{\infty} \mu^*(dt)=g^*(0) <1.$$ 

To show that $R_0^*<1$, observe that
the function $$g(x) = \sum_{k=1}^{\infty} \tilde{p}_k (1-\psi + \psi x)^{k-1}
$$
is convex and analytic on $x \in [0,1]$ and has derivative
$$\frac{d}{dx}g(x) = \psi \sum_{k=1}^{\infty} (k-1) \tilde{p}_k (1-\psi + \psi x)^{k-2} =  \psi \mathbb{E}[(\tilde{D}-1)(1-\psi+\psi x)^{\tilde{D}-2}].$$
Furthermore, by the definition of 
 $\tilde{q}^*$ (see \eqref{endext}) and the convexity of $g(\cdot)$, $\tilde{q}^*$ and 1 are the only two solutions of  the equation $g(x)-x=0$ in $[0,1]$. We recall that $\tilde{q}^*<1$.
Because $g(x)-x$ is convex, we know that the function $g(x)-x$ has to be negative between its two zeros (i.e.\ between $\tilde{q}^*$ and 1). 
This, together with  $\frac{d}{dx}g(x)|_{x= \tilde{q}^*}= R_0^*$ (by \eqref{R0stardef})  implies that $R_0^*<1$, which finishes the proof.
\end{proof}

\subsection{Time until the end of the epidemic}

In this section we use the construction of the epidemic generated graph as presented in Section \ref{sec:construct}. We restrict ourselves to major outbreaks. Our approach is to define a random time $t_1=t_1^{(n)}$, when the fraction of  susceptible vertices among all vertices is larger than, but close to, its asymptotic value and sandwich (w.h.p.)\ the process $\{|I^{(n)}(t)|;t \geq t_1^{(n)}\}$ between two branching processes and then find the time until those branching processes go extinct. 

Let $\{\mathcal{X}^{(n)}(t);t \geq 0\}$
be as in Section \ref{sec:construct}. In our analysis below we consider
$|\mathcal{E}^{(n)}_S(t)|$, $|\mathcal{E}^{(n)}_P(t)|$  and $$\sum_{v\in S^{(n)}(t)} d_v \ind(d_v \geq k)= \sum_{v\in V^{(n)}} d_v \ind(d_v \geq k) \ind(v \in S^{(n)}(t)).$$
Note that $|\mathcal{E}^{(n)}_S(t)|$ and $\sum_{v\in S^{(n)}(t)} d_v \ind(d_v \geq k)$ are decreasing in $t$, while $|\mathcal{E}^{(n)}_P(t)|$ is increasing in $t$.

For all $n,k \in \mathbb{N}$ define the constant $\hat{d}^{(n)}_k$ by
$$\hat{d}^{(n)}_k = \sup_{n' \geq n}\mathbb{P}(D^{(n')} = k).$$
Observe that by definition $\hat{d}^{(n)}_k \geq \hat{d}^{(n')}_k \geq \mathbb{P}(D^{(n')} = k)$ for all $n' \in \mathbb{N}_{\geq n}$ and for all $k \in \mathbb{N}$.
Furthermore, it follows immediately from $D^{(n)} \darrow D$ that $\hat{d}^{(n)}_k \to \mathbb{P}(D = k)$. However, note that in general $\sum_{k=1}^{\infty} \hat{d}^{(n)}_k$ may be strictly larger than 1 and there is no reason to assume that
$\sum_{k=1}^{\infty} \hat{d}^{(n)}_k$ converges to 1 as $n \to \infty$.

For $\epsilon \in (0,\psi (1-\tilde{q}^*))= (0,1-Q)$ define
\begin{equation*}
\begin{split}
t_a^{(n)}(\epsilon) & = \inf\{t>0;|\mathcal{E}_S^{(n)}(t)| \leq \mathbb{E}[(Q + \epsilon)^{\tilde{D}}]\ell(n)\}, \\
t_b^{(n)}(\epsilon) & = \inf\{t>0;|\mathcal{E}_P^{(n)}(t)| \geq  \ell(n) -1 - (Q + \epsilon)^2 \ell(n)\},\\
t_c^{(n)}(\epsilon) & =\inf\{t>0; \sum_{v\in S^{(n)}(t)} d_v \ind(d_v \geq k)  \leq
\sum_{j=k}^{\infty} n j \hat{d}^{(n)}_j (Q + \epsilon)^j
\mbox{ for all $k \in \mathbb{N}$}\},
\end{split}
\end{equation*}
where the infimum of an empty set is $\infty$.
Let $$t_1^{(n)}(\epsilon) = \max(t_a^{(n)}(\epsilon) ,t_b^{(n)}(\epsilon) ,t_c^{(n)}(\epsilon))$$ and define the event $\mathcal{A}^{(n)}_1(\epsilon) = \{ t_1^{(n)}(\epsilon)< \infty\}$. 
Let $\mathcal{A}^{(n)}_2(\epsilon)$ be the event that all of the following events hold.
\begin{eqnarray*}
|\mathcal{E}_S^{(n)}(\infty)| & > & \mathbb{E}[(Q - \epsilon)^{\tilde{D}}] \ell(n), \\
|\mathcal{E}_P^{(n)}(\infty)| & < &  \ell(n) -1 - (Q - \epsilon)^2 \ell(n), \\
\sum_{v\in S^{(n)}(\infty) } d_v \ind(d_v \geq k) & \geq & \sum_{j=k}^{\infty} n j \mathbb{P}(D = j) 
(Q - \epsilon)^{j} \mbox{ for all $k \in \mathbb{N}_{\leq \lfloor 1/\epsilon\rfloor}$}.
\end{eqnarray*}
Finally define $\mathcal{A}^{(n)}(\epsilon)=\mathcal{A}^{(n)}_1(\epsilon)\cap \mathcal{A}^{(n)}_2(\epsilon)$.
\begin{lemma}
\label{eeslemma2}
For all $\epsilon \in (0,\psi (1-\tilde{q}^*))$, it holds that $\mathbb{P}(\mathcal{A}^{(n)}(\epsilon)|\mathcal{M}^{(n)}) \to 1$ and there exists $c_1>0$, such that 
$$\mathbb{P}(|S^{(n)}(t_1^{(n)}(\epsilon))|-|S^{(n)}(\infty)| > c_1 n|\mathcal{M}^{(n)}) \to 1.$$ 
\end{lemma}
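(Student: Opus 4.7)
The plan is to obtain both claims from three asymptotic identities describing the final state of the epidemic. Conditional on $\mathcal{M}^{(n)}$, with the shorthand $Q = 1-\psi+\psi\tilde{q}^*$ of \eqref{Qdef}, I will establish
$$|\mathcal{E}_S^{(n)}(\infty)|/\ell(n) \parrow \mathbb{E}[Q^{\tilde{D}}], \qquad |\mathcal{E}_P^{(n)}(\infty)|/\ell(n) \parrow 1 - Q^2,$$
and, for each fixed $k \in \mathbb{N}$, $n^{-1}\sum_{v \in S^{(n)}(\infty)} d_v \ind(d_v\geq k) \parrow \mathbb{E}[D\,\ind(D\geq k)\, Q^D]$. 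The first and third identities follow directly from the asymmetric construction of Section \ref{sec:construct}: a half-edge $(v,j)$ remains in $\mathcal{E}_S^{(n)}(\infty)$ iff $v$ is ultimately susceptible, and by the susceptibility-set argument of Section 5.2.1 a degree-$k$ vertex survives with limiting probability $Q^k$; size-biasing by degree then yields $\mathbb{E}[Q^{\tilde D}]$. The second identity is the edge-level analogue: for a uniformly chosen edge, both endpoints have size-biased degree, and in the branching-process tree approximation the event ``edge never activated'' factorises across the two directions of potential transmission into independent probabilities, each equal to $\tilde{q}^* + (1-\tilde{q}^*)(1-\psi) = Q$ (the vertex either escapes infection from its other $\tilde D - 1$ neighbours, or is infected but its contact time along the edge exceeds the infectious period). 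Concentration of the relevant sums around these means is standard for configuration-model epidemics; Assumption (A4) is invoked to preclude anomalous high-degree configurations at finite $n$.

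Given these identities, the first conclusion $\mathbb{P}(\mathcal{A}^{(n)}(\epsilon) \mid \mathcal{M}^{(n)}) \to 1$ follows quickly from monotonicity. Since $\epsilon \in (0, 1-Q)$ we have $Q+\epsilon \in (Q,1)$, and shrinking $\epsilon$ if necessary we may also assume $Q-\epsilon \in (0,Q)$, so that $x \mapsto x^k$ is strictly increasing across all three points and
$$\mathbb{E}[(Q-\epsilon)^{\tilde{D}}] < \mathbb{E}[Q^{\tilde{D}}] < \mathbb{E}[(Q+\epsilon)^{\tilde{D}}],$$
with analogous strict inequalities for the other two asymptotic quantities. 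Each of the three defining conditions of $\mathcal{A}_2^{(n)}(\epsilon)$ then holds w.h.p.\ by the convergences above (the truncation $k \leq \lfloor 1/\epsilon \rfloor$ in the third condition reduces it to a union bound over finitely many $k$). The event $\mathcal{A}_1^{(n)}(\epsilon) = \{t_1^{(n)}(\epsilon) < \infty\}$ follows automatically: since $|\mathcal{E}_S^{(n)}(\cdot)|$ is non-increasing and $|\mathcal{E}_S^{(n)}(\infty)|/\ell(n)$ lies strictly below the $t_a^{(n)}$-threshold w.h.p., we have $t_a^{(n)}(\epsilon) < \infty$ w.h.p., and analogously $t_b^{(n)}(\epsilon), t_c^{(n)}(\epsilon) < \infty$ w.h.p.\ by monotonicity of $|\mathcal{E}_P^{(n)}(\cdot)|$ and of each restricted degree sum.

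For the second conclusion, the key observation is that the threshold definitions of $t_a^{(n)}, t_b^{(n)}, t_c^{(n)}$ place the epidemic at time $t_1^{(n)}(\epsilon)$ at an \emph{intermediate} state whose effective susceptibility parameter is $Q + \epsilon$ rather than the terminal value $Q$. Rerunning the susceptibility-set derivation of Section 5.2.1 with $Q$ replaced by $Q + \epsilon$ (or, equivalently, invoking a Volz/Decreusefond-type fluid limit for the evolution of $|S^{(n)}(t)|/n$), the fraction of susceptible vertices at time $t_1^{(n)}(\epsilon)$ is asymptotically at least $\mathbb{E}[(Q+\epsilon)^D]$ w.h.p., strictly larger than the terminal fraction $q^* = \mathbb{E}[Q^D]$ by the same monotonicity of $x\mapsto x^k$. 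Combined with Lemma \ref{lemint1}, this yields $|S^{(n)}(t_1^{(n)}(\epsilon))| - |S^{(n)}(\infty)| > c_1 n$ w.h.p.\ for any positive $c_1 < \mathbb{E}[(Q+\epsilon)^D] - \mathbb{E}[Q^D]$.

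The main obstacle is the rigorous justification of the in-probability convergences of the three final-state sums under the weak moment conditions of Assumptions \ref{mainassump}, especially when $\mathbb{E}[D^2] = \infty$. A direct second-moment/variance argument on the random pairing construction, combined with Assumption (A4) to exclude pathological high-degree vertices, is the most self-contained route; alternatively one can transfer the limits from a Janson-type coupling with an i.i.d.\ degree graph, for which the susceptibility-set identities are easier to verify.
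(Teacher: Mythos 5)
Your overall architecture (establish limits in probability for the final-state quantities, then conclude by strict monotonicity of $x\mapsto x^k$) matches the spirit of the paper's proof, and the limiting constants you identify, $\mathbb{E}[Q^{\tilde D}]$, $Q^2$ for the unpaired fraction, and $\mathbb{E}[D\,\ind(D\ge k)Q^D]$, are the right ones. However, there are two genuine gaps.

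First, the event $\{t_c^{(n)}(\epsilon)<\infty\}$ requires $\sum_{v\in S^{(n)}(t)}d_v\ind(d_v\ge k)\le \sum_{j\ge k}nj\hat{d}^{(n)}_j(Q+\epsilon)^j$ \emph{simultaneously for all} $k\in\mathbb{N}$; the truncation to $k\le\lfloor 1/\epsilon\rfloor$ appears only in $\mathcal{A}_2^{(n)}(\epsilon)$. Your fixed-$k$ convergence together with ``monotonicity of each restricted degree sum'' gives the bound for each $k$ separately, but a union bound over infinitely many $k$ does not close without uniform tail control --- and when $\mathbb{E}[D^2]=\infty$ the sums $\sum_v d_v\ind(d_v\ge k)$ are carried by a vanishing proportion of very-high-degree vertices, so this is not a formality. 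This uniform-in-$k$ estimate is the bulk of the paper's appendix: Chebyshev's inequality for each moderate $k$, and for large $k$ a split according to whether $n\mathbb{P}(D^{(n)}=k)(Q+3\epsilon/4)^k\le 1$, with a first-moment bound on that set and a Chernoff-type binomial tail bound (from Janson et al.) on its complement, both of which are summable in $k$.

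Second, your three in-probability limits are asserted rather than proved. The claimed factorisation of the edge-level event into two independent probabilities equal to $Q$, and the appeal to ``standard'' concentration for the degree-restricted sums, are precisely the statements that need an argument: the infection statuses of the two endpoints of an edge are not independent, and a direct second-moment computation on the terminal state of the exploration is not obviously tractable under Assumptions \ref{mainassump} alone. The paper's device is to take Lemma \ref{lemint1} as the only epidemic input and to convert everything else into statements about the i.i.d.\ uniform sequence $\mathbf{x}^{(n)}$: it sandwiches the total number of explored draws between two Poissonized counts $K_1<K_2$ with means $\ell(n)|\log(Q\pm\epsilon/2)|$, under which the events that a given half-edge has not yet been drawn become exactly independent with probability $Q\pm\epsilon/2$, and all three final-state quantities (and the lower bound $|S^{(n)}(t_1^{(n)}(\epsilon))|-|S^{(n)}(\infty)|>c_1n$) then follow from weak laws of large numbers evaluated at those counts. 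Your alternative of invoking a Volz/Decreusefond-type fluid limit for $|S^{(n)}(t)|/n$ at time $t_1^{(n)}(\epsilon)$ would import moment conditions on $D$ that the paper deliberately avoids. In short, the proposal names the right targets but leaves the two hard steps --- the uniform-in-$k$ bound and the reduction of the final-state sums to an i.i.d.\ structure --- unaddressed.
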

\noindent The proof of this lemma is provided in Appendix \ref{Appendixsec}.

Now  we are almost ready to prove Lemma \ref{lemint3}.
In the proof we consider who infected whom, and since individuals can be infected only once, this leads to a tree representation of the infection process: the infection tree.
For $u,v \in V^{(n)}$, if $v$ is infected by $u$ then $u$ is the infector of $v$ and we write $u = \zeta(v)$.
We say that vertex $u$ is an ancestor of $v$  if there is $j \in \mathbb{N}$ and there are vertices $u = v_0, v_1, \cdots, v_j = v \in V^{(n)}$ such that for $i \in \mathbb{N}_{\leq j}$, $v_{i-1} = \zeta(v_i)$. To be complete we say that $v$ is an ancestor of itself.
 
Let $v$ be a vertex infected at time $\sigma(v)$. 
Then define 
$\{J_v^{(n)}(t);t \geq 0\}$, through $$J_v^{(n)}(t) = I^{(n)}(\sigma(v)+t) \cap \{u \in V^{(n)}; \mbox{$v$ is an ancestor of $u$} \}.$$ 

Furthermore, 
let $V_*^{(n)}(t)\subset V^{(n)}$, be the set of vertices infected after time $t$, of which the infector is infected before  time $t$, i.e.\
\begin{equation}
\label{Vstardef}
V_{*}^{(n)}(t) = \{v \in  S^{(n)}(t) \setminus  S^{(n)}(\infty);\zeta(v) \in I^{(n)}(t)\}.
\end{equation}
In the language of \cite{Bham14}, $V_{*}^{(n)}(t)$ is the coming generation at time $t$.

\subsubsection{Proof of Lemma \ref{lemint3}}

Lemma \ref{lemint3} follows trivially from the following three lemmas. In Lemma \ref{revlemup} we show that condition \eqref{THMass} is necessary to obtain the proper scaling for de duration of the end of the epidemic. If condition \eqref{THMass} holds, then we provide an upperbound for the duration of the end of the epidemic in Lemma \ref{endlemup} and we provide a lower bound in Lemma \ref{endlemdown}.
\begin{lemma}
\label{revlemup}
Assume that \eqref{THMass} does not hold, i.e.\ assume that there exist $\eta>0$ such that $\int_0^{\infty} e^{(|\alpha^*|-\eta)t}L(dt) = \infty$,
then there exists $\delta>0$, $\gamma>0$ and a strictly increasing infinite sequence of positive integers $n_1,n_2, \cdots$  
such that
$$\mathbb{P}\left(T^{*}(n_i)-T'_{\gamma}(n_i) > \frac{\log n_i}{|\alpha^*| - \delta}\mid \mathcal{M}^{(n_i)}\right) \to 1 \qquad \mbox{as $i \to \infty$.}$$
\end{lemma}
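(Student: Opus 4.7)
The plan is to exploit that if $\mathbb{P}(L>t)$ decays too slowly, then with high probability at least one of the $\Theta(n)$ vertices infected after $T'_\gamma(n)$ has an infectious period that on its own exceeds $(\log n)/(|\alpha^*|-\delta)$, and this single vertex forces $T^*(n)-T'_\gamma(n)$ to be at least that large. The three ingredients are: extracting suitable tail behaviour of $L$ from the failure of \eqref{THMass}, counting late-infected vertices, and decoupling their infectious periods from the infection dynamics.

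For the tail I would argue by contradiction using the equivalent condition \eqref{equivalence}. If $\mathbb{P}(L>t) \leq e^{-(|\alpha^*|-\eta/2)t}$ held for all sufficiently large $t$, then $\int_0^\infty e^{(|\alpha^*|-\eta)t}\mathbb{P}(L>t)\,dt$ would be bounded by a convergent integral of $e^{-(\eta/2)t}$, contradicting the hypothesis. (Note $\eta<|\alpha^*|$ is automatic, since otherwise $e^{(|\alpha^*|-\eta)t}\le 1$ makes the integral trivially finite.) Hence there exists a strictly increasing $s_i\to\infty$ with $\mathbb{P}(L>s_i)>e^{-(|\alpha^*|-\eta/2)s_i}$. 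Fix $\delta=\eta/4$ and any $\gamma\in(0,1-q^*)$, and set $n_i=\lfloor e^{(|\alpha^*|-\delta)s_i}/2\rfloor$, passing to a further subsequence if necessary so that $\{n_i\}$ is a strictly increasing sequence of positive integers. Two properties follow for large $i$: (i) $s_i>\frac{\log n_i}{|\alpha^*|-\delta}$, and (ii) $n_i\mathbb{P}(L>s_i)\geq \tfrac14 e^{(\eta/4)s_i}\to\infty$.

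For the count, let $V_{\text{late}}^{(n)}=\{v\in V^{(n)}:\sigma(v)>T'_\gamma(n)\}$. By the definition of $T'_\gamma$ at most $\gamma n+1$ vertices are infected by time $T'_\gamma(n)$, while Lemma \ref{lemint1} yields $|S^{(n)}(\infty)|/n\parrow q^*$ on $\mathcal{M}^{(n)}$. Setting $\rho=(1-q^*-\gamma)/2>0$, it follows that $|V_{\text{late}}^{(n_i)}|\geq \rho n_i$ w.h.p.\ on $\mathcal{M}^{(n_i)}$.

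The crux of the argument, and the step I expect to be the main obstacle, is the decoupling: conditional on the history up to the infection time of the $k$-th late-infected vertex, its infectious period is a fresh draw from $L$. This relies on the explicit construction of Section \ref{sec:construct}, where $\{L_v\}_{v\in V^{(n)}}$ are i.i.d.\ pre-assigned independently of everything else; at the moment the $k$-th late-infected vertex $v_k$ is identified, only $L_{v_1},\dots,L_{v_{k-1}}$ have entered the dynamics, so $L_{v_k}$ remains a fresh $L$-sample independent of $v_k$'s identity. A filtration argument then shows that, listed in order of infection, the infectious periods of the vertices of $V_{\text{late}}^{(n_i)}$ are i.i.d.\ $L$. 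Combined with the count, a binomial tail estimate gives
\begin{equation*}
\mathbb{P}\!\left(\max_{v\in V_{\text{late}}^{(n_i)}} L_v\leq s_i \;\Big|\; |V_{\text{late}}^{(n_i)}|\geq \rho n_i\right) \leq (1-\mathbb{P}(L>s_i))^{\rho n_i} \leq e^{-\rho n_i\mathbb{P}(L>s_i)}\to 0.
\end{equation*}
On the complementary event, any $v\in V_{\text{late}}^{(n_i)}$ with $L_v>s_i$ yields $T^*(n_i)\geq \sigma(v)+L_v>T'_\gamma(n_i)+s_i>T'_\gamma(n_i)+\frac{\log n_i}{|\alpha^*|-\delta}$, which is the desired bound.
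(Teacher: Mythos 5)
Your proof is correct and follows essentially the same strategy as the paper's: extract a subsequence $n_i$ along which $n_i\,\mathbb{P}(L>\log n_i/(|\alpha^*|-\delta))\to\infty$, observe that $\Theta(n)$ vertices are infected after $T'_{\gamma}(n)$, and conclude that w.h.p.\ one of their (effectively fresh i.i.d.) infectious periods alone exceeds the threshold, forcing $T^*(n_i)$ to be large. The only differences are cosmetic: you obtain the subsequence by a pointwise contradiction on the tail of $\mathbb{P}(L>t)$ and count late-infected vertices directly from Lemma \ref{lemint1} and the definition of $T'_{\gamma}$, whereas the paper derives the subsequence from divergence of a series and invokes Lemma \ref{eeslemma2} for the count; you are in fact more explicit than the paper about the filtration argument justifying the product bound on the maximum infectious period.
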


\begin{lemma}
\label{endlemup}
Assume that \eqref{THMass} holds.
For every $\delta \in (0,|\alpha^*|)$, there exist $\gamma>0$, such that 
$$\mathbb{P}\left(
T^{*}(n)-T'_{\gamma}(n) < \frac{\log n}{|\alpha^*| - \delta}\mid \mathcal{M}^{(n)}\right) \to 1.$$
\end{lemma}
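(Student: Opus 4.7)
Fix $\delta\in(0,|\alpha^*|)$. The plan is to start from the state-control checkpoint $t_1^{(n)}(\epsilon)$ of Lemma \ref{eeslemma2}, dominate the residual infection process by a system of independent subcritical branching processes whose Malthusian parameter is close to $\alpha^*$, apply Corollary \ref{bpsubcor} to bound the extinction time, and finally transfer this bound to $T^*(n)-T'_\gamma(n)$ for an appropriate $\gamma$. With $Q=1-\psi+\psi\tilde q^*$ (as in \eqref{Qdef}), I pick $\epsilon>0$ so small that the perturbed offspring measure
\begin{equation*}
\bar\mu^*_\epsilon(dt)=\mathbb{E}\bigl[(\tilde D-1)(Q+\epsilon)^{\tilde D-2}\bigr]\beta e^{-\beta t}\mathbb{P}(L>t)\,dt
\end{equation*}
is still strictly subcritical --- its total mass tends to $R_0^*<1$ as $\epsilon\downarrow 0$ by dominated convergence, whose applicability is guaranteed by Claim \ref{dalem} at $x=Q$ --- its Malthusian parameter $\bar\alpha^*_\epsilon$ satisfies $|\bar\alpha^*_\epsilon|>|\alpha^*|-\delta/2$, and the integrability hypotheses of Lemma \ref{sublem} and Corollary \ref{bpsubcor} hold via Remark \ref{REMass} together with assumption \eqref{THMass}. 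Lemma \ref{eeslemma2} then gives $\mathbb{P}(\mathcal{A}^{(n)}(\epsilon)\mid\mathcal{M}^{(n)})\to 1$, so I condition on $\mathcal{A}^{(n)}(\epsilon)$ throughout.

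The main technical step is a stochastic coupling built along the half-edge-revelation construction of Section \ref{sec:construct}. On $\mathcal{A}^{(n)}(\epsilon)$, combining the defining bounds of $t_a,t_b,t_c$ with those of $\mathcal{A}^{(n)}_2(\epsilon)$ implies that for every $t\in[t_1^{(n)}(\epsilon),T^*(n)]$ the fraction of unpaired half-edges belonging to susceptible vertices stays within an $O(\epsilon)$ neighbourhood of $p^*_{ss}=\tilde q^*/Q$, and the size-biased degree of any susceptible vertex contacted after $t_1^{(n)}(\epsilon)$ has a tail dominated by the law with weights proportional to $\tilde p_k(Q+\epsilon)^{k-1}$. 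Revealing half-edges in depth-first order and applying a monotone coupling at each pairing step --- absorbing all $O(\epsilon)$ slacks into the $\epsilon$-inflation inside $\bar\mu^*_\epsilon$ --- shows that $\{|I^{(n)}(t_1^{(n)}(\epsilon)+s)|;\,s\ge 0\}$ is stochastically dominated by the population-size process of a sum of $K_n:=|I^{(n)}(t_1^{(n)}(\epsilon))|$ independent branching processes with mean offspring measure $\bar\mu^*_\epsilon$ and particle lifetimes distributed as $L$. A first-moment argument using the second conclusion of Lemma \ref{eeslemma2} (at least $c_1 n$ further infections occur after $t_1^{(n)}(\epsilon)$, each traceable to one of the $K_n$ particles, and each such particle spawns in expectation a finite subcritical tree of size at most $1/(1-R_0^*)+O(\epsilon)$) forces $K_n=\Theta(n)$ w.h.p., so $\log K_n=\log n+o(\log n)$.

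Applying Corollary \ref{bpsubcor} to this dominating process gives $T^*(n)-t_1^{(n)}(\epsilon)\le(\log K_n)/|\bar\alpha^*_\epsilon|+o(\log n)=(\log n)/|\bar\alpha^*_\epsilon|+o(\log n)\le(\log n)/(|\alpha^*|-\delta)$ w.h.p. To transfer this to $T^*(n)-T'_\gamma(n)$, Lemma \ref{lemint2} already gives $T'_\gamma(n)=(\log n)/\alpha'+o(\log n)$ for every $\gamma\in(0,1-q^*)$, so it suffices to show $t_1^{(n)}(\epsilon)=(\log n)/\alpha'+o(\log n)$. The inequality $t_1^{(n)}(\epsilon)\ge T'_{\gamma_1}(n)$ for $\gamma_1\in(0,1-q^*)$ small is immediate (the thresholds $t_a,t_b,t_c$ cannot hold while the susceptible fraction exceeds $1-\gamma_1$, using $\mathbb{E}[(Q+\epsilon)^{\tilde D}]\mathbb{E}[D]<\mathbb{E}[D]$). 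For the matching upper bound, I pick $\gamma_2$ close to $1-q^*$ and verify that at time $T'_{\gamma_2}(n)+C$, for some random $C=O_{\mathbb P}(1)$, all three thresholds are crossed; the $O_{\mathbb P}(1)$ duration of the intermediate phase follows either from the fluid-limit results of \cite{barbour2013approximating} (after the same truncation step used in the proof of Lemma \ref{lemint4}) or from a direct martingale-concentration argument on the interval $[T'_{\gamma_1}(n),T'_{\gamma_2}(n)]$. Sandwiching gives $t_1^{(n)}(\epsilon)=(\log n)/\alpha'+o(\log n)$ w.h.p., and any $\gamma\in(\gamma_1,\gamma_2)$ then works.

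The main obstacle is the coupling in the second paragraph: the configuration-model infection process is not literally a branching process, because the pool of susceptible half-edges is shared among all infectious vertices and depletes as infections occur, and because the effective degree distribution of newly infected vertices drifts over time. The enabler is that on $\mathcal{A}^{(n)}(\epsilon)$ these drifts are all of size $O(\epsilon)$ uniformly over $[t_1^{(n)}(\epsilon),T^*(n)]$ --- at most $(\mathbb{E}[(Q+\epsilon)^{\tilde D}]-\mathbb{E}[(Q-\epsilon)^{\tilde D}])\ell(n)=O(\epsilon)\ell(n)$ further half-edges get paired after $t_1^{(n)}(\epsilon)$ --- so a single $\epsilon$-inflated measure $\bar\mu^*_\epsilon$ dominates uniformly. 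Executing this sequential coupling carefully, together with verifying the $O_{\mathbb P}(1)$ intermediate-phase estimate needed to bridge $t_1^{(n)}(\epsilon)$ and $T'_\gamma(n)$, is where the bulk of the formal work lies.
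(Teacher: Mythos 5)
Your overall architecture --- the checkpoint $t_1^{(n)}(\epsilon)$ from Lemma \ref{eeslemma2}, domination of the late epidemic by $\epsilon$-inflated subcritical branching processes, and a first-moment/extinction-time bound (Corollary \ref{bpsubcor} is itself proved by exactly the union bound $k\,\mathbb{P}(Z(t)>0)\lesssim k e^{-|\alpha|t}$ that the paper uses directly) --- matches the paper's proof. But there is a genuine gap in where you root the dominating processes. You take the $K_n=|I^{(n)}(t_1^{(n)}(\epsilon))|$ vertices infectious at the checkpoint as roots of independent branching processes ``with particle lifetimes distributed as $L$''. Conditional on $u\in I^{(n)}(t_1^{(n)}(\epsilon))$, the residual infectious period of $u$ is $L_u-(t_1^{(n)}(\epsilon)-\sigma(u))$ conditioned on being positive, which by the inspection paradox is \emph{stochastically larger} than a fresh copy of $L$ unless $L$ has the new-better-than-used property; so these roots cannot be coupled below fresh particles, and the lifetime hypothesis (ii) of Lemma \ref{sublem} cannot be verified uniformly over roots whose conditioning age grows like $\log n$. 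The paper avoids this entirely by splitting $\{I^{(n)}(T'_\gamma(n)+\frac{\log n}{|\alpha^*|-\delta})\neq\emptyset\}$ into (a) an \emph{old} infective persisting, bounded by $n\,\mathbb{P}(L>\frac{\log n}{|\alpha^*|-\delta})\to 0$ --- this is precisely where hypothesis \eqref{THMass} does its real work, and precisely what fails without it (Lemma \ref{revlemup}) --- and (b) the subtrees $J_v^{(n)}$ rooted at the \emph{freshly infected} coming generation $V_*^{(n)}(T'_\gamma(n))$, whose delayed birth times are integrated against $\beta e^{-\beta s}\mathbb{P}(L>s)\,ds$. Your proposal invokes \eqref{THMass} only for integrability inside Lemma \ref{sublem} and misses its essential role in killing the old infectives.

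A second gap: you assert that the inflated measure $\bar\mu^*_\epsilon$ has a Malthusian parameter $\bar\alpha^*_\epsilon$, but by Remark \ref{Remadag} the equation $\int_0^\infty e^{-xt}\bar\mu^*_\epsilon(dt)=1$ may have no solution when $g^*(\alpha^\dagger)<1$ (the transform jumps from a value below $1$ to $+\infty$ at $\alpha^\dagger$), so condition (i) of Lemma \ref{sublem} can fail; the paper repairs this by adding an atom of mass $1-g^*(\alpha^*)$ at age $0$ (the $\bar Y$ term in \eqref{hatski}), which you would need as well. Finally, a non-fatal inefficiency: your detour proving $t_1^{(n)}(\epsilon)=T'_\gamma(n)+O_{\mathbb{P}}(1)$ via an intermediate-phase estimate is unnecessary --- the second conclusion of Lemma \ref{eeslemma2} lets you choose $\gamma$ with $T'_\gamma(n)\in(t_1^{(n)}(\epsilon),T^*(n))$ w.h.p., after which the domination established for all $t>t_1^{(n)}(\epsilon)$ applies directly to the process restarted at $T'_\gamma(n)$, and no control of the gap $T'_\gamma(n)-t_1^{(n)}(\epsilon)$ is needed.
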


\begin{lemma}
\label{endlemdown}
For every $\delta>0$, there exist $\gamma>0$, such that $$\mathbb{P}\left(T^{*}(n)-T'_{\gamma}(n) > \frac{\log n}{|\alpha^*|+\delta}\mid \mathcal{M}^{(n)}\right) \to 1.$$
\end{lemma}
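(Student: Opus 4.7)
I aim to produce, near time $T'_\gamma(n)$, a collection of $k_n=\lceil n^\beta\rceil$ infected vertices (with $\beta<1$) whose descendant infection subtrees can be stochastically dominated from below by independent copies of a subcritical branching process with Malthusian parameter $\alpha^*_-$ close to $\alpha^*$. Since the survival probability of such a branching process at time $t$ decays like $Ce^{-|\alpha^*_-|t}$ by Lemma \ref{sublem}, the probability that \emph{all} $k_n$ copies are extinct at time $t_n=\log n/(|\alpha^*|+\delta)$ is at most $(1-Ce^{-|\alpha^*_-|t_n})^{k_n}\leq \exp(-Cn^{\beta-|\alpha^*_-|/(|\alpha^*|+\delta)})$, which tends to $0$ provided $\beta>|\alpha^*_-|/(|\alpha^*|+\delta)$. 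Since $|\alpha^*_-|$ can be taken arbitrarily close to $|\alpha^*|$, such a $\beta\in(0,1)$ exists.

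\textbf{Execution.} Fix $\gamma\in(0,1-q^*)$ close to $1-q^*$ and $\epsilon>0$ small. By Lemma \ref{eeslemma2}, w.h.p.\ on $\mathcal{M}^{(n)}$, at time $t_1^{(n)}(\epsilon)$ the half-edge composition is within $\epsilon$ of its asymptotic final-phase values and $|S^{(n)}(t_1^{(n)}(\epsilon))|-|S^{(n)}(\infty)|\geq c_1n$. By Lemma \ref{lemint2} and the remark following Lemma \ref{lemint3}, $|T'_\gamma(n)-t_1^{(n)}(\epsilon)|=o(\log n)$, so it suffices to lower-bound $T^*(n)-t_1^{(n)}(\epsilon)$. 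Because Lemma \ref{sublem} needs moment conditions that may fail in the general setting, I pass to a stochastically smaller epidemic by truncating the infectious period at $t_0$ (setting $L_{t_0}=L\wedge t_0$) and thinning out all half-edges incident to vertices of degree $>K$ (as in the proof of Lemma \ref{lemint4}); its final-phase Malthusian parameter $\alpha^*_{K,t_0}$ satisfies $|\alpha^*_{K,t_0}|\searrow|\alpha^*|$ as $K,t_0\to\infty$, so I take $K,t_0$ large so that $|\alpha^*_{K,t_0}|<|\alpha^*|+\delta/4$. In the truncated model I pick $k_n=\lceil n^\beta\rceil$ infected vertices at time $t_1^{(n)}(\epsilon)$ that are roots of non-trivial descendant subtrees (their existence follows from $|S^{(n)}(t_1^{(n)}(\epsilon))|-|S^{(n)}(\infty)|\geq c_1n$) and, via the sequential construction of Section \ref{sec:construct}, explore their descendant infection subtrees in parallel. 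While the total number of explored half-edges stays $o(n)$, Lemma \ref{eeslemma2} ensures the offspring measure at any new infection stochastically dominates from below a fixed measure $\mu^*_-$, a small perturbation of $\mu^*_{K,t_0}$, with Malthusian parameter $\alpha^*_-$ satisfying $|\alpha^*_-|<|\alpha^*|+\delta/2$. This yields the coupling required in the plan, and the probability estimate at the top then gives the conclusion.

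\textbf{Main obstacle.} The delicate point is the coupling itself: the $k_n$ subtrees share the pool of susceptible half-edges and so are not truly independent, and the offspring distribution at each newly infected vertex depends on the evolving environment. The key observation is that, because degrees are bounded by $K$ after thinning and the comparison branching processes are subcritical (hence have finite expected total progeny uniformly in $n$), the cumulative number of half-edges ever explored across all $k_n$ subtrees up to time $t_n$ is $O(n^\beta)=o(n)$ w.h.p., so the environment stays essentially frozen at its value at $t_1^{(n)}(\epsilon)$; combined with the uniform estimates in Lemma \ref{eeslemma2}, this is what allows one to choose a single lower-bound offspring measure $\mu^*_-$ valid throughout the exploration. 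A secondary subtlety is the regime of Remark \ref{Remadag} where $\alpha^*=\alpha^{\dagger}$ and $\alpha^*$ is not a Malthusian parameter of the untruncated $\mu^*$; the infectious-period truncation sidesteps this because $g^*_{t_0}$ is finite on all of $\mathbb{R}$ and its unique root can be placed arbitrarily close to $\alpha^*$ by choosing $t_0$ large.
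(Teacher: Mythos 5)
Your overall strategy---dominate the post-$T'_{\gamma}(n)$ epidemic from below by many independent copies of a subcritical branching process whose decay rate is at most $|\alpha^*|+\delta/2$, then use the bound $(1-Ce^{-|\alpha^*_-|t_n})^{k_n}\to 0$---is essentially the paper's (the paper roots the comparison at the coming generation $V_*^{(n)}(T'_{\gamma}(n))$, of which there are $\Theta(n)$, and packages the final estimate as Corollary \ref{bpsubcor}; your $n^{\beta}$ variant of that counting is fine). The genuine gap is in what you single out as the key observation. You justify the existence of a single lower-bound offspring measure $\mu^*_-$ by claiming the environment ``stays essentially frozen'' because your $k_n$ subtrees explore only $O(n^{\beta})=o(n)$ half-edges. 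This is false: the pool of susceptible half-edges is shared with the \emph{entire} epidemic, and between $t_1^{(n)}(\epsilon)$ and $T^*(n)$ a positive fraction of the population is infected (you yourself invoke $|S^{(n)}(t_1^{(n)}(\epsilon))|-|S^{(n)}(\infty)|\geq c_1 n$), so $\Theta(n)$ half-edges are paired during the very window in which your subtrees grow. Controlling only the half-edges consumed by your chosen subtrees does not control the environment they see; and you cannot instead run a ghost exploration of just those $k_n$ subtrees, since ignoring the competing infections makes the subtrees \emph{larger}, which is the wrong direction for a lower bound.

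The correct, and simpler, justification---the one the paper uses---is monotonicity: $|\mathcal{E}_S^{(n)}(t)|$ and $\sum_{v\in S^{(n)}(t)}d_v\ind(d_v\geq k)$ are nonincreasing while $|\mathcal{E}_P^{(n)}(t)|$ is nondecreasing, so for every $t>t_1^{(n)}(\epsilon)$ the success probability $\kappa^{(n)}(t)$ and the tails $\pi^{(n)}_{\geq k}(t)$ are bounded below by the corresponding quantities computed from the state at $t=\infty$, and the event $\mathcal{A}^{(n)}_2(\epsilon)$ of Lemma \ref{eeslemma2} says precisely that these terminal quantities are within $\epsilon$ of the limiting constants. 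This yields a conditional lower bound at every pairing step, uniformly in $t$ and in the history, which is also what resolves the dependence between subtrees: each subtree then dominates an independent copy of the lower-bound process. Two secondary points. First, rooting at vertices already infectious at $t_1^{(n)}(\epsilon)$ forces you to handle residual infectious periods and the selection bias from conditioning on a non-trivial subtree; the paper avoids both by rooting at $V_*^{(n)}(T'_{\gamma}(n))$, whose members are fresh at their infection times $\sigma(v)>T'_{\gamma}(n)$, with Step 3 of its proof showing $|V_*^{(n)}(T'_{\gamma}(n))|=\Theta(n)$. Second, since $t_1^{(n)}(\epsilon)\leq T'_{\gamma}(n)$, a lower bound on $T^*(n)-t_1^{(n)}(\epsilon)$ does not by itself bound $T^*(n)-T'_{\gamma}(n)$, so your asserted $T'_{\gamma}(n)-t_1^{(n)}(\epsilon)=o(\log n)$ needs an actual proof; likewise the claim that the graph-thinned model's final-phase rate converges to $|\alpha^*|$ requires an argument about how its own $\tilde{q}^*$ varies, whereas the paper sidesteps this by truncating only the comparison measure $\mu^*$ (at $1/\epsilon$, with factor $K_-(\epsilon)\nearrow 1$) rather than the graph itself. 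Your truncation device for the degenerate case where $\alpha^*$ is not a Malthusian parameter is sound and matches the paper's.
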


\begin{proof}[Proof of Lemma \ref{revlemup}]
By Remark \ref{REMass} we know that  for  $\eta \in (0,|\alpha^*|)$,
\begin{equation}
\label{equivalence2}
\int_0^{\infty} e^{(|\alpha^*|-\eta)t}L(dt) = \infty \Leftrightarrow \int_0^{\infty} e^{(|\alpha^*|-\eta) t} \mathbb{P}(L>t) dt = \infty.
\end{equation}
Working with the right hand side, we continue by observing 
\begin{equation*}
\begin{split}
\ & \int_0^{\infty} e^{(|\alpha^*|-\eta) t} \mathbb{P}(L>t) dt\\
= & \sum_{n=1}^{\infty} \int_{\frac{\log n}{|\alpha^*|-\eta/2}}^{\frac{\log(n+1)}{|\alpha^*|-\eta/2}} e^{(|\alpha^*|-\eta) t} \mathbb{P}(L>t) dt\\
\leq & \sum_{n=1}^{\infty} \frac{\log(1+1/n)}{|\alpha^*|-\eta/2} e^{\frac{|\alpha^*|-\eta}{|\alpha^*|-\eta/2} \log (n+1)} \mathbb{P}\left(L>\frac{\log n}{|\alpha^*|-\eta/2}\right)\\
\leq & \sum_{n=1}^{\infty} \frac{\log 2}{|\alpha^*|-\eta/2}  n^{\frac{|\alpha^*|-\eta}{|\alpha^*|-\eta/2}\frac{\log(n+1)}{\log n}} \mathbb{P}\left(L>\frac{\log n}{|\alpha^*|-\eta/2}\right).
\end{split}
\end{equation*}
Furthermore, there exists $\epsilon_1>0$, which depends on $\eta$ such that for all large enough $n$
$$\frac{|\alpha^*|-\eta}{|\alpha^*|-\eta/2} \frac{\log(n+1)}{\log n}<1-2 \epsilon_1.$$
So, there exists $\epsilon_1>0$ such that,
\begin{equation}
\label{intimpl}
\int_0^{\infty} e^{(|\alpha^*|-\eta) t} \mathbb{P}(L>t) dt =\infty \Rightarrow 
\sum_{n=1}^{\infty}  n^{1-2\epsilon_1} \mathbb{P}\left(L>\frac{\log n}{|\alpha^*|-\eta/2}\right)= \infty.
\end{equation}

Now assume that $\int_0^{\infty} e^{(|\alpha^*|-\eta) t} \mathbb{P}(L>t) dt =\infty$ and that there exists $N_0 \in \mathbb{N}$ such that $\mathbb{P}\left(L>\frac{\log n}{|\alpha^*|-\eta/2}\right) \leq n^{-(1-\epsilon_2)}$ for all $\epsilon_2>0$ and all $n \in \mathbb{N}_{> N_0}$.
This implies with $\epsilon_2= \epsilon_1$ that
\begin{multline*}
\sum_{n=1}^{\infty} n^{1-2 \epsilon_1} \mathbb{P}\left(L>\frac{\log n}{|\alpha^*|-\eta/2}\right)\\
\leq \sum_{n=1}^{N_0}  n^{1-2 \epsilon_1} +
\sum_{n=N_0+1}^{\infty}  n^{1-2\epsilon_1}n^{-(1-\epsilon_1)} 
\leq   (N_0)^{1-2 \epsilon_1} +
\sum_{n=N_0+1}^{\infty}  n^{-\epsilon_1},
\end{multline*}
which is finite and therefore contradicts   $\int_0^{\infty} e^{(|\alpha^*|-\eta) t} \mathbb{P}(L>t) dt =\infty$.

So, there exists  $\epsilon_1>0$ such that  $\mathbb{P}\left(L>\frac{\log n}{|\alpha^*|-\eta/2}\right) >n^{-(1-\epsilon_1)}$, for infinitely many $n$.
Say that 
for the infinite increasing sequence  $n_1, n_2, \cdots$ we have 
\begin{equation}
\label{nilarge}
\mathbb{P}\left(L>\frac{\log n_i}{|\alpha^*|-\eta/2}\right) >(n_i)^{-(1-\epsilon_1)} \qquad \mbox{for $i \in \mathbb{N}$.} 
\end{equation}
We choose $\gamma>0$ and $c_1>0$ such that 
\begin{equation}
\label{niconv}
\mathbb{P}\left(|S^{(n_i)}(T'_{\gamma}(n_i))\setminus S^{(n_i)}(\infty)|>c_1 n_i\mid \mathcal{M}^{(n_i)}\right) \to 1 \qquad \mbox{ as $i \to \infty$,} 
\end{equation}
which we can do by Lemma \ref{eeslemma2}.
If the maximal infectious period of vertices in the set $|S^{(n_i)}(T'_{\gamma}(n_i))\setminus S^{(n_i)}(\infty)|$ is  larger than $\frac{\log n_i}{|\alpha^*|-\eta/2}$, then 
$$\left|I^{(n_i)}\left(T'_{\gamma}(n_i) + \frac{\log n_i}{|\alpha^*|-\eta/2}\right)\right|>0.$$
The probability that the maximum infectious period of vertices in the set  $|S^{(n_i)}(T'_{\gamma}(n_i))\setminus S^{(n_i)}(\infty)|$ is  larger than $\frac{\log n_i}{|\alpha^*|-\eta/2}$ is given by 
$$
1-\left(1-\mathbb{P}\left(L>\frac{\log n_i}{|\alpha^*|-\eta/2}\right)\right)^{|S^{(n_i)}(T'_{\gamma}(n_i))\setminus S^{(n_i)}(\infty)|}.$$
By \eqref{nilarge} this probability is larger than 
$$
1-\left(1-\frac{c_1(n_i)^{\epsilon_1}}{c_1 n_i} \right)^{|S^{(n_i)}(T'_{\gamma}(n_i))\setminus S^{(n_i)}(\infty)|}.
$$
So combining this with \eqref{niconv} we obtain
\begin{equation*}
\begin{split}
\ & \  \mathbb{P}\left(T^{*}(n_i)-T'_{\gamma}(n_i) > \frac{\log n_i}{|\alpha^*| - \eta/2}\mid \mathcal{M}^{(n)}\right)\\
= & \  
\mathbb{P}\left(\left|I^{(n_i)}\left(T'_{\gamma}(n_i) + \frac{\log n_i}{|\alpha^*|-\eta/2}\right)\right|>0\mid  \mathcal{M}^{(n_i)}\right) \\
\geq & \ 
1-\mathbb{P}(|S^{(n_i)}(T'_{\gamma}(n_i))\setminus S^{(n_i)}(\infty)| \leq c_1 n_i\mid \mathcal{M}^{(n_i)})\\
\ & \ - \mathbb{P}\left(\left|I^{(n_i)}\left(T'_{\gamma}(n_i) + \frac{\log n_i}{|\alpha^*|-\eta/2}\right)\right|>0 \bigcap \left|S^{(n_i)}(T'_{\gamma}(n_i))\setminus S^{(n_i)}(\infty)\right|>c_1 n_i\mid \mathcal{M}^{(n_i)}\right)\\
\geq & \ 
1-\mathbb{P}(|S^{(n_i)}(T'_{\gamma}(n_i))\setminus S^{(n_i)}(\infty)| \leq c_1 n_i\mid \mathcal{M}^{(n_i)})
-\left(1-\frac{c_1(n_i)^{\epsilon_1}}{c_1 n_i} \right)^{c_1 n_i},
\end{split}
\end{equation*}
which converges to 1 as $i \to \infty$.
\end{proof}

\begin{proof}[Proof of Lemma \ref{endlemup}]
We divide the proof in the following steps
\begin{enumerate}
\item Show that there exists with high probability a constant $\gamma>0$ such that for  $v \in V_*^{(n)}(T'_{\gamma}(n))$ and for $\delta \in (0,|\alpha^*|)$, we can construct a  branching process which dominates $\{J_v^{(n)}(t);t \geq 0\}$ and has Malthusian parameter in the interval $(-|\alpha^*|,-(|\alpha^*|- \delta))$.
 \item Show that the dominating branching process satisfy the conditions of Lemma \ref{sublem}.
 %\item Show that the number of vertices infected after time $T'_{\gamma}(n)$, which are infected by vertices infected before time $T'_{\gamma}(n)$ is $\theta(n)$.
 \item Show that 
 \begin{multline*}
 \sum_{v \in V_*^{(n)}(T'_{\gamma}(n))}\left|J_v^{(n)}\left(T'_{\gamma}(n) + \frac{\log n}{|\alpha^*| - \delta}-\sigma(v)\right)\right|\\
 + \left|I^{(n)}\left(T'_{\gamma}(n))\cap I^{(n)}(T'_{\gamma}(n) + \frac{\log n}{|\alpha^*| - \delta}\right)\right|=0 \qquad \mbox{w.h.p.}
 \end{multline*}
\end{enumerate}
Again we use $Q=1-\psi+\psi \tilde{q}^*$.\\

\noindent\textbf{Step 1:}

\noindent Let $\epsilon >0$ satisfy $\epsilon<Q$ and $Q+2 \epsilon<1$.
If at time $t$ a half-edge from $\mathcal{E}^{(n)}_I(t-)$ is paired with another half-edge, this other half-edge belongs to $\mathcal{E}^{(n)}_S(t-)$ with probability $\kappa^{(n)}(t)$, which is defined by
\begin{equation}
\label{kappat}
\kappa^{(n)}(t) = \frac{|\mathcal{E}^{(n)}_S(t-)|}{\ell(n)-|\mathcal{E}^{(n)}_P(t-)|-1}.
\end{equation}
Here  $\ell(n)-|\mathcal{E}^{(n)}_P(t-)|$ is the number of not-yet paired vertices just before time $t$ and the $-1$ appear in the denominator because  the half-edge from $\mathcal{E}^{(n)}_I(t-)$ cannot be paired with itself.  
Furthermore, the probability that if the half-edge is paired with a half-edge from $\mathcal{E}^{(n)}_S(t-)$, it belongs to a vertex of degree at least $k$ is given by $\pi_{\geq k}^{(n)}(t)$, which is defined by
\begin{equation}
\label{piknt}
\pi_{\geq k}^{(n)}(t) = \frac{\sum_{v\in S^{(n)}(t-)} d_v \ind(d_v \geq k)}{|\mathcal{E}^{(n)}_S(t-)|}. 
\end{equation}
The processes $\{|S^{(n)}(t)|; t \geq 0\}$ and $\{|\mathcal{E}^{(n)}_S(t)|;t \geq 0\}$ are decreasing in $t$, while $\{|\mathcal{E}^{(n)}_P(t)|;t \geq 0\}$ is increasing in $t$. 
So, for $t_1^{(n)}(\epsilon)$ as in Lemma \ref{eeslemma2}, and $t>t_1^{(n)}(\epsilon)$ and on $\mathcal{A}^{(n)}(\epsilon)$,
\begin{equation}
\label{gammaupbound1}
\kappa^{(n)}(t) \leq \frac{|\mathcal{E}^{(n)}_S(t_1^{(n)}(\epsilon))|}{\ell(n)-|\mathcal{E}^{(n)}_P(\infty)|-1}
< \frac{\mathbb{E}[(Q+\epsilon)^{\tilde{D}}] \ell(n)}{(Q- \epsilon)^2 \ell(n)} = \frac{\mathbb{E}[(Q+\epsilon)^{\tilde{D}}]}{(Q- \epsilon)^2 }.
\end{equation}
For future reference define
\begin{equation}
\label{kappa+}
\kappa_+(\epsilon) = \frac{\mathbb{E}[(Q+\epsilon)^{\tilde{D}}]}{(Q- \epsilon)^2 }
\end{equation}
Note that the second inequality is strict.
Similarly, on $\mathcal{A}^{(n)}(\epsilon)$ and for $k \geq 1$ and $t \geq t_1^{(n)}(\epsilon)$ we have by Lemma \ref{eeslemma2} that
\begin{multline}
\label{uppboundD1}
\pi_{\geq k}^{(n)}(t) = \frac{\sum_{v\in S^{(n)}(t-)} d_v \ind(d_v \geq k)}{|\mathcal{E}^{(n)}_S(t-)|} \\
\leq \min\left(1, \frac{\sum_{v\in S^{(n)}(t_1^{(n)}(\epsilon))} d_v \ind(d_v \geq k)}{|\mathcal{E}^{(n)}_S(\infty)|}\right)\\
 \leq \min\left(1, \frac{\sum_{j=k}^{\infty} n j \hat{d}^{(n)}_j (Q + \epsilon)^j}{\ell(n)\mathbb{E}[(Q-\epsilon)^{\tilde{D}}]}\right) 
 =
\mathbb{P}(\tilde{D}^{(n),+}(\epsilon) \geq k),
\end{multline}
where the final equality is the defines the random variable $\tilde{D}^{(n),+}(\epsilon)$.
That is, $\tilde{D}^{(n),+}(\epsilon)$ stochastically dominates the random variable defined through $\pi_{\geq k}^{(n)}(t)$ for $t>t^{(n)}_1(\epsilon)$. It is important to note that for $\epsilon$ as above and $n'>n$, 
 $\tilde{D}^{(n),+}(\epsilon)$ stochastically dominates  $\tilde{D}^{(n'),+}(\epsilon)$.

Recall the notation from Section \ref{sec:construct}
Let $v$ be a vertex infected at time $t$. Then $v$ has a random degree with distribution defined through $\pi_{\geq k}^{(n)}(t)$. 
One of the $d_v$ half-edges attached to $v$ is paired at time $t$, while the other $d_v-1$ are still unpaired at time $t$. Let $L_v$ be the infectious period of $v$ and without loss of generality we can assume that $v$ was infected through half-edge $(v,d_v)$. So, $\tau_{v,1}, \tau_{v,2}, \cdots \tau_{v,d_v-1}$ are the independent exponentially distributed random variables with expectation $1/\beta$ assigned to the different unpaired half-edges of $v$.  For $i \leq d_v-1$, if $\tau_{v,i}  \leq L_v$, then $t+\tau_{v,i}$ is the time at which a contact is made along the half-edge (and the half-edge is paired) and the contact made at time $t+\tau_{v,i}$ is with a susceptible with probability $\kappa^{(n)}(t+\tau_{v,i}-)$. 
By \eqref{gammaupbound1}, \eqref{kappa+} and \eqref{uppboundD1} %\eqref{upprv} and \eqref{lowrv} 
we thus obtain that 
 for all $v \in V_*^{(n)}(t^{(n)}_1(\epsilon))$,
$\{|J_v^{(n)}(t)|;t \geq 0\}$ is stochastically dominated by a branching process in which particles give birth at ages given by the point process 
\begin{equation}
\label{pointproc+1}
\left\{\sum_{k=1}^{\tilde{D}^{(n),+}(\epsilon)-1} \ind(\tau_{k}<\min(L,t)) Y_k^+(\epsilon)  ;t \geq 0\right\},
\end{equation}
where $Y_k^+(\epsilon)$ is a Bernoulli random variable with success probability $\kappa_+(\epsilon)$, $\tau_1, \tau_2, \cdots$ are exponential distributed random variables with expectation $1/\beta$ and all defined random variables are independent.

To make step 2 below work, we define another process which dominates the above point process by
\begin{multline}
\label{hatski}
\{\hat{\xi}^{(n),+}_{\epsilon}(t);t \geq 0\}\\
= \left\{\ind(g^*(\alpha^*)< 1) \bar{Y} +\sum_{k=1}^{\tilde{D}^{(n),+}(\epsilon)-1} \ind(\tau_{v,k}<\min(L,t)) Y_k^+(\epsilon)   ;t \geq 0\right\}.
\end{multline}
Here $g^*(\cdot)$ is defined as in \eqref{gstardef} and $\bar{Y}$ is a Bernoulli random variable with success probability $1-g^*(\alpha^*)$, which is independent of everything else in the process. 
So, if $g^*(\alpha^*) \geq 1$, $\{\hat{\xi}^{(n),+}_{\epsilon}(t);t \geq 0\}$ is given by \eqref{pointproc+1}. While if 
$g^*(\alpha^*)< 1$, we obtain $\{\hat{\xi}^{(n),+}_{\epsilon}(t);t \geq 0\}$ by adding with probability $1-g^*(\alpha^*)$ a point at time $0$  to \eqref{pointproc+1}. 
Note that if we create a branching process by adding this possible extra particle to the branching process with mean offspring measure $\mu^*(\cdot)$, for which $g^*(\alpha^*)<1$ then for the new branching process the mean offspring measure, $\bar{\mu}^*(\cdot)$ say, satisfies
$$
\int_0^{\infty} e^{-\alpha^* t} \bar{\mu}^*(dt) = 
\int_0^{\infty} e^{- \alpha^*t} \mu^*(dt) + 
1-g^*(\alpha^*) = g^*(\alpha^*) + 1- g^*(\alpha^*) = 1.$$

The mean offspring measure of the branching process with reproduction process  $\{\hat{\xi}^{(n),+}_{\epsilon}(t);t \geq 0\}$ is then defined through
$$
\mu^{(n),+}_{\epsilon}(t)=\\ \mathbb{E}[\tilde{D}^{(n),+}(\epsilon)-1]\kappa_+(\epsilon) \mathbb{P}(\tau_{v,k}<\min(L,t)) + \max(0,1-g^*(\alpha^*)),
$$
where $\kappa_+(\epsilon)$ is defined in \eqref{kappa+} and
\begin{equation*}\label{expupbp1}
\mathbb{E}[\tilde{D}^{(n),+}(\epsilon)] \leq 
\sum_{k=1}^{\infty} 
 \frac{\sum_{j=k}^{\infty} n j \hat{d}^{(n)}_j (Q + \epsilon)^j}{\ell(n) \mathbb{E}[(Q-\epsilon)^{\tilde{D}}]}
=   \frac{\sum_{j=1}^{\infty}  j^2 \hat{d}^{(n)}_j (Q + \epsilon)^j}{n^{-1}\ell(n) \mathbb{E}[(Q-\epsilon)^{\tilde{D}}]}.
\end{equation*}
Since $1 \geq \hat{d}_j^{(n)} \to d_j$ for all $j$ and is decreasing in $n$ and because the sum $\sum_{j=1}^{\infty} j^2 (Q + \epsilon)^j<\infty$, the numerator decreases to $$\mathbb{E}[D^2(Q+\epsilon)^D] = \mathbb{E}[D]\mathbb{E}[\tilde{D}(Q+\epsilon)^{\tilde{D}}],$$ while the denominator converges to $\mathbb{E}[D]\mathbb{E}[(Q-\epsilon)^{\tilde{D}}]$.
So, $$n_1(\epsilon) = \min\left\{n \in \mathbb{N};  \mathbb{E}[\tilde{D}^{(n),+}(\epsilon)] < \frac{\mathbb{E}[\tilde{D}(Q+2 \epsilon)^{\tilde{D}}]}{\mathbb{E}[(Q-\epsilon)^{\tilde{D}}]}\right\}$$
is well defined and finite. Denote the branching process with reproduction process $\{\hat{\xi}^{(n_1(\epsilon)),+}_{\epsilon}(t);t \geq 0\}$ by $\{\hat{\xi}^{+}_{\epsilon}(t);t \geq 0\}$ and the corresponding offspring mean with $\mu^{+}_{\epsilon}(t)$.
Finally, 
\begin{equation}
\label{XLT1}
\mathbb{P}(\tau_{v,k}<\min(L,t)) = \int_0^t (1-e^{-\beta s})L(ds)= \int_0^t \beta e^{-\beta s} \mathbb{P}(L>s)ds.
\end{equation}
Combining the above terms we obtain that $$\mu_{\epsilon}^{+}(0) = \ind(g^*(\alpha^*)<1) (1-g^*(\alpha^*))$$
and for $t>0$ and $n>n_1$ that
\begin{multline}
\label{mueps+1}
\mu^{+}_{\epsilon}(dt)
\leq   \frac{\mathbb{E}[(\tilde{D}-1)(Q+\epsilon)^{\tilde{D}}]}{\mathbb{E}[(Q-\epsilon)^{\tilde{D}}]} \frac{\mathbb{E}[(Q+2 \epsilon)^{\tilde{D}}]}{(Q- \epsilon)^2} \beta e^{-\beta t} \mathbb{P}(L>t)dt\\
= K_+(\epsilon) \mu^*(dt),\end{multline}
where  $\mu^*(dt)$ is defined in \eqref{endmalt2} and
\begin{equation}
\label{Kplus1}
K_+(\epsilon) = 
\frac{\mathbb{E}[(\tilde{D}-1)(Q+\epsilon)^{\tilde{D}}]}{\mathbb{E}[(\tilde{D}-1)Q^{\tilde{D}}]} \frac{\mathbb{E}[(Q+2\epsilon)^{\tilde{D}}]}{\mathbb{E}[(Q-\epsilon)^{\tilde{D}}]}\frac{Q^2}{(Q-\epsilon)^2} >1.
\end{equation}

So, for $t \in (0,\infty)$, we have
$\mu^*(dt) \leq \mu^{+}_{\epsilon}(dt) \leq K_+(\epsilon) \mu^*(dt),$
where the first inequality is strict for $t$ such that $\mathbb{P}(L>t)>0$.
Therefore, for every $x>\alpha^*$ we have,
\begin{multline}
\label{measbound}
g^*(x) + \max(0,1-g^*(\alpha^*)) = \int_0^{\infty} e^{-x t}\mu^*(dt) +  \max(0,1-g^*(\alpha^*))\\ < 
  \int_0^{\infty} e^{-x t} \mu^{+}_{\epsilon}(dt)
\leq K_+(\epsilon) \int_0^{\infty} e^{-x t}\mu^*(dt) +   \max(0,1-g^*(\alpha^*))\\
= K_+(\epsilon) g^*(x) + \max(0,1-g^*(\alpha^*))
.
\end{multline}
Note that  that $g^*(\alpha^*) + \max(0,1-g^*(\alpha^*))= \max(1,g^*(\alpha^*)=1$.
Then note that all expectations in $K_+(\epsilon)$ as defined in \eqref{Kplus1} are finite by Claim \ref{dalem} and continuous in $\epsilon$. Furthermore, $K_+(\epsilon)$ is clearly increasing in $\epsilon$. By the finiteness of all expectations in   \eqref{Kplus1}, we also obtain that $\lim_{\epsilon \searrow 0} K_+(\epsilon)=1$. 

Together this gives that 
for every $\delta' >0$, we can choose $\epsilon>0$ such that, 
\begin{equation}
\label{appreps1}
\begin{split}
\ & \int_0^{\infty} e^{-(\alpha^* + \delta') t} \mu^{+}_{\epsilon}(dt)\\  \leq & \ 
K_+(\epsilon) \int_0^{\infty} e^{-(\alpha^* + \delta') t}\mu_*(dt) + \max(0,1-g^*(\alpha^*)) <1
\end{split}
\end{equation}
and 
\begin{equation*}
\lim_{x \searrow \alpha^*} \int_0^{\infty} e^{-x t} \mu^{+}_{\epsilon}(dt) >1.
\end{equation*}
It is easy to check that $\int_0^{\infty} e^{-x t} \mu^{+}_{\epsilon}(dt)$ is continuous and monotonous in $x$ on $(\alpha^{\dagger}, \infty)$. Therefore, for $\epsilon$ such that \eqref{appreps1} is satisfied, there exists $\alpha^+_{\epsilon} \in (\alpha^*,\alpha^* + \delta')$ such that 
$\int_0^{\infty} e^{-\alpha^+_{\epsilon} t} \mu^+_{\epsilon}(dt) =1$.\\

\noindent\textbf{Step 2:}

\noindent In this step, let $\mu^+_{\epsilon}(dt)$ be as in Step 1. We wish to show that for every $\delta \in (0,|\alpha^*|)$ there exists $\epsilon>0$  and $\alpha^+_{\epsilon} \in (\alpha^*,\alpha^*+\delta)$ such that
\begin{enumerate}[label=(\roman*)]
\item  $1 = \int_0^{\infty} e^{-\alpha^+_{\epsilon}t}  \mu^+_{\epsilon}(dt)$, 
\item $\int_0^{\infty} t e^{|\alpha^+_{\epsilon}| t} L(dt)<\infty$, 
\item $\int_0^{\infty} t e^{|\alpha^+_{\epsilon}| t} \mu^+_{\epsilon}(dt)<\infty$ and 
\item $\mathbb{E}\left[\int_0^{\infty}e^{|\alpha^+_{\epsilon}| t}\hat{\xi}_{\epsilon}^+(dt) \log^+(\hat{\xi}_{\epsilon}^+(\infty))\right]< \infty$.
\end{enumerate}

From the last paragraph of Step 1, (i) follows immediately. 
Furthermore, by assumption we have
$\int_0^{\infty} e^{-x t} L(dt)<\infty$ for all $x > \alpha^*$. This implies that 
$\int_0^{\infty} t e^{-x t} L(dt)<\infty$ for all $x > \alpha^*$. (ii) follows now from $\alpha^+_{\epsilon}>\alpha^*$.

Because $\alpha^+_{\epsilon} > \alpha^* \geq \alpha^{\dagger}$ and the second inequality in \eqref{measbound} we obtain  by the definition of $\alpha^{\dagger}$ that $\int_0^{\infty} e^{-\alpha^+_{\epsilon} t}\mu_*(dt) 
 < \infty$ and thus that
$$\int_0^{\infty} t e^{- \alpha^+_{\epsilon} t} \mu^+_{\epsilon}(dt) \leq  K_+(\epsilon) \int_0^{\infty} e^{-\alpha^+_{\epsilon} t}\mu_*(dt) 
+ \ind(g^*(\alpha^*)<1)(1-g^*(\alpha^*))<\infty.$$

Finally, 
\begin{equation*}
\begin{split}
\ & \ \mathbb{E}\left[\int_0^{\infty}e^{|\alpha^+_{\epsilon}| t}\hat{\xi}_{\epsilon}^+(dt) \log^+(\hat{\xi}_{\epsilon}^+(\infty))\right]\\
\leq & \ \mathbb{E}\left[\int_0^{\infty}e^{|\alpha^+_{\epsilon}| t}\hat{\xi}_{\epsilon}^+(dt) \log^+(\tilde{D}^+(\epsilon)-1))\right]\\
\leq & \ \mathbb{E}[(\tilde{D}^{+}(\epsilon)-1) \log^+(\tilde{D}^+(\epsilon)-1)] \kappa_+(\epsilon)
\int_0^{\infty}e^{|\alpha^+_{\epsilon}| t} \beta e^{-\beta t} P(L>t) dt\\
\leq & \ \mathbb{E}[(\tilde{D}^{+}(\epsilon)-1)^2] \kappa_+(\epsilon)
\int_0^{\infty}e^{|\alpha^-_{\epsilon}| t} \beta e^{-\beta t} P(L>t) dt\\
= & \ \frac{\mathbb{E}[(\tilde{D}^{+}(\epsilon)-1)^2]}{\mathbb{E}[\tilde{D}^{+}(\epsilon)-1]} \kappa_+(\epsilon)
\int_0^{\infty}e^{|\alpha^-_{\epsilon}| t} \mu^-_{\epsilon}(dt).
\end{split}
\end{equation*}
It follows from Claim \ref{dalem} that the quotient of the expectations is  finite, while the integral is finite by step (iii). So condition (iv) is met.\\

\noindent\textbf{Step 3:}

\noindent
For $v \in S^{(n)}(0)\setminus  S^{(n)}(\infty)$ 
recall from \eqref{Vstardef} that $V_*^{(n)}(t)$ is the set of vertices infected after time $t$, by an infector infected before or at time $t$.
%define $\zeta(v)$ as the vertex through which $v$ is infected. That is, in $\mathcal{X}^{(n)}$, at time $\sigma(v)$, a half-edge of $v$ is paired with a half-edge of $\zeta(v)$.  
%For $t>0$, let $V_{\dagger}^{(n)}(t) \subset S^{(n)}(t)\setminus  S^{(n)}(\infty)$ be the set of vertices infected after time $t$, which are infected by vertices infected before time $t$. 

For $\delta \in (0,|\alpha^*|)$, we are interested in 
$$\mathbb{P}\left(I^{(n)}\left(T'_{\gamma}(n) + \frac{\log n}{|\alpha^*|-\delta}\right) \neq \emptyset \mid \mathcal{M}^{(n)}\right).$$
Observe
\begin{equation*}
\begin{split}
\ &\left\{I^{(n)} \left(T'_{\gamma}(n) + \frac{\log n}{|\alpha^*|-\delta}
\right) \neq \emptyset\right\} \\ 
%\ & = & \Bigl\{ \left| I^{(n)} \left(T'_{\gamma}(n) + \frac{\log n}{|\alpha^*|-\delta} \right) \cap
%I^{(n)} \left(T'_{\gamma}(n) \right) \right|\\
%\ & \ & + \sum_{v \in V_{*}^{(n)}(T'_{\gamma}(n))}  \left|J_v^{(n)} \left( \frac{\log n}{|\alpha^*|-\delta}-(\sigma(v)-T'_{\gamma}(n))\right) \right| >0
%\Bigr\}\\
= &\left\{ I^{(n)} \left(T'_{\gamma}(n) + \frac{\log n}{|\alpha^*|-\delta} \right) \cap
I^{(n)} \left(T'_{\gamma}(n) \right) \neq \emptyset \right\}\\
\ &\bigcup \left(\bigcup_{v \in V_{*}^{(n)}(T'_{\gamma}(n))} \left\{ J_v^{(n)} \left( \frac{\log n}{|\alpha^*|-\delta}-(\sigma(v)-T'_{\gamma}(n))\right) \neq \emptyset \right\}\right). 
\end{split}
\end{equation*}
So,
\begin{equation*}
\begin{split}
\ & \mathbb{P}\left(I^{(n)}\left(T'_{\gamma}(n) + \frac{\log n}{|\alpha^*|-\delta}\right) \neq \emptyset \mid \mathcal{M}^{(n)}\right)\\
\leq &
\mathbb{P}\left(I^{(n)} \left(T'_{\gamma}(n) + \frac{\log n}{|\alpha^*|-\delta} \right) \cap 
I^{(n)} \left(T'_{\gamma}(n) \right) \neq \emptyset \mid \mathcal{M}^{(n)} \right)\\
\ & + \sum_{v \in V_{*}^{(n)}(T'_{\gamma}(n))}
\mathbb{P}\left( J_v^{(n)} \left( \frac{\log n}{|\alpha^*|-\delta}-(\sigma(v)-T'_{\gamma}(n))\right) \neq \emptyset  \mid \mathcal{M}^{(n)}\right). 
\end{split}
\end{equation*}
We treat the two terms on the right hand side separately. Observe that
\begin{multline*}
\mathbb{P}\left(I^{(n)} \left(T'_{\gamma}(n) + \frac{\log n}{|\alpha^*|-\delta} \right) \cap 
I^{(n)} \left(T'_{\gamma}(n) \right) \neq \emptyset \mid \mathcal{M}^{(n)} \right)\\
 \leq 
\mathbb{P}\left( \bigcap_{v \in V^{(n)}} \left\{L_v > \frac{\log n}{|\alpha^*|-\delta}\right\} \right)
\leq
\sum_{v \in V^{(n)}}  \mathbb{P}\left(L_v > \frac{\log n}{|\alpha^*|-\delta} \right)\\= n  \mathbb{P}\left(L > \frac{\log n}{|\alpha^*|-\delta} \right). 
\end{multline*}
Assume that $\liminf_{n \to \infty} n  \mathbb{P}\left(L > \frac{\log n}{|\alpha^*|-\delta} \right) >0$. Then there exists an sequence of integers $1= n_0, n_1, n_2, \cdots$ and a constant $c_2>0$ such that
$n_i  \mathbb{P}\left(L > \frac{\log n_i}{|\alpha^*|-\delta} \right) >c_2$ and $n_{i+1}/n_i>e^{|\alpha^*|-\delta}$ for all $i \in \mathbb{N}_0$.
This implies that 
\begin{equation*}
\begin{split}
\ & \ \int_0^{\infty} e^{(|\alpha^*|-\eta)t} \mathbb{P}(L>t) dt\\
= & \ \sum_{i=1}^{\infty} \left(\int_{\frac{\log n_i}{\alpha^*-\eta}-1}^{\frac{\log n_i}{\alpha^*-\eta}} e^{(|\alpha^*|-\eta)t} \mathbb{P}(L>t) dt 
+  \int_{\frac{\log n_i}{\alpha^*-\eta}}^{\frac{\log n_{i+1}}{\alpha^*-\eta}-1} e^{(|\alpha^*|-\eta)t} \mathbb{P}(L>t) dt\right)\\
\geq & \ \sum_{i=1}^{\infty} \int_{\frac{\log n_i}{\alpha^*-\eta}-1}^{\frac{\log n_i}{\alpha^*-\eta}} e^{(|\alpha^*|-\eta)t} \mathbb{P}(L>t) dt \\
\geq & \  \sum_{i=1}^{\infty}  e^{-(|\alpha^*|-\eta)}n_i \mathbb{P}\left(L>\frac{\log n_i}{|\alpha^*|-\eta}\right) dt\\
\geq & \ \sum_{i=1}^{\infty}  e^{-(|\alpha^*|-\eta)} c_2, 
\end{split}
\end{equation*}
which is infinite. 
But by Assumption \eqref{THMass} and by  \eqref{equivalence} we have   
$$\int_0^{\infty} e^{(|\alpha^*|-\eta) t} \mathbb{P}(L>t) dt<\infty \qquad \mbox{for $\eta \in (0,|\alpha^*|)$.}$$ So, we arrive at a contradiction and
therefore  $n  \mathbb{P}\left(L > \frac{\log n}{|\alpha^*|-\delta} \right) \to 0$ and thus 
$$\mathbb{P}\left(I^{(n)} \left(T'_{\gamma}(n) + \frac{\log n}{|\alpha^*|-\delta} \right) \cap
I^{(n)} \left(T'_{\gamma}(n) \right) \neq \emptyset \mid \mathcal{M}^{(n)} \right) \to 0.$$

Now consider, 
$$ \sum_{v \in V_{*}^{(n)}(T'_{\gamma}(n))}
\mathbb{P}\left( J_v^{(n)} \left( \frac{\log n}{|\alpha^*|-\delta}-(\sigma(v)-T'_{\gamma}(n))\right) \neq \emptyset  \mid \mathcal{M}^{(n)}\right).$$
Recall that $\zeta(v)$ is the vertex through which $v$ is infected.
By definition $\sigma(v)-\sigma(\zeta(v)) \geq \sigma(v) - T'_{\gamma}(n)>0$ for $v \in V_{*}^{(n)}(T'_{\gamma}(n))$ and $|J_v^{(n)}(t)|=0 \Rightarrow |J_v^{(n)}(t')|=0$  for all $t'>t$. So,
\begin{equation*}
 \begin{split}
\ & \mathbb{P}\left(
\sum_{v \in V_{*}^{(n)}(T'_{\gamma}(n))} \left|J_v^{(n)} \left( \frac{\log n}{|\alpha^*|-\delta}-(\sigma(v)-T'_{\gamma}(n))\right)\right|=0
\mid \mathcal{M}^{(n)}\right)\\
\geq &
\mathbb{P}\left(
\sum_{v \in V_{*}^{(n)}(T'_{\gamma}(n))} \left|J_v^{(n)} \left( \frac{\log n}{|\alpha^*|-\delta}-(\sigma(v)-\sigma(\zeta(v))\right)\right|=0
\mid \mathcal{M}^{(n)}\right)
\end{split}
\end{equation*}
By steps 1 and 2 we know that we can choose $\epsilon>0$ such that 
$$\left\{\sum_{v \in V_{*}^{(n)}(T'_{\gamma}(n))} \left|J_v^{(n)} (t-(\sigma(v)-\sigma(\zeta(v)))\right|; t \geq T'_{\gamma}(n) \right\}$$ is dominated by 
$$\left\{\sum_{v \in V_{*}^{(n)}(T'_{\gamma}(n))} \left|Z_v (t-(\sigma(v)-\sigma(\zeta(v)))\right|; t \geq T'_{\gamma}(n) \right\},$$ 
where for $v\in V^{(n)}$, we let $\{Z_v(t);t\geq 0\}$ be independent branching processes all with Malthusian parameter $\alpha^+_{\epsilon}$, independent of $\{\sigma(v);v\in V^{(n)}\}$ and $\{\sigma(\zeta(v));v\in V^{(n)}\}$ and satisfying the conditions of Lemma \ref{sublem}.
Since $V_{*}^{(n)}(T'_{\gamma}(n)) \subset V^{(n)}$ we obtain that 
\begin{equation*}
\begin{split}
\ & \mathbb{P}\left(
\sum_{v \in V_{*}^{(n)}(T'_{\gamma}(n))} \left|J_v^{(n)} \left( \frac{\log n}{|\alpha^*|-\delta}-(\sigma(v)-\sigma(\zeta(v))\right)\right|=0
\mid \mathcal{M}^{(n)}\right)\\
\geq \ & 
\mathbb{P}\left(
\sum_{v \in V^{(n)}\setminus v_0} \left|Z_v\left( \frac{\log n}{|\alpha^*|-\delta}-(\sigma(v)-\sigma(\zeta(v))\right)\right|=0\right)\\
= & \prod_{v \in V^{(n)}\setminus v_0}\mathbb{P}\left(
 \left|Z_v\left( \frac{\log n}{|\alpha^*|-\delta}-(\sigma(v)-\sigma(\zeta(v))\right)\right|=0\right)\\
\geq & \  1 -\sum_{v \in V^{(n)}\setminus v_0}\mathbb{P}\left(
 \left|Z_v\left( \frac{\log n}{|\alpha^*|-\delta}-(\sigma(v)-\sigma(\zeta(v))\right)\right|>0\right).
\end{split}
\end{equation*}
Our next observation is that $$
\{\sigma(v)-\sigma(\zeta(v));v \in  V^{(n)}\setminus v_0\}
\subset
\{\tau_{v,v'} ;(v,v') \in \mathbf{s}^{(n)}, \tau_{v,v'} \leq L_v\}, 
$$ 
where $\mathbf{s}^{(n)}$ and $\{L_v; v\in V^{(n)}\}$ are defined in Section \ref{sec:construct}.
Therefore with $Z_{v,v'}$ i.i.d.\ copies of $Z_v$,
\begin{equation*}
 \begin{split}
\ & \sum_{v \in V^{(n)}\setminus v_0}\mathbb{P}\left(
 \left|Z_v\left( \frac{\log n}{|\alpha^*|-\delta}-(\sigma(v)-\sigma(\zeta(v))\right)\right|>0\right)\\
\leq & \sum_{v \in V^{(n)}} \sum_{v' =1}^{d_v}  \mathbb{P}\left( \ind(\tau_{v,v'} \leq L_v)
 \left|Z_{v,v'}\left( \frac{\log n}{|\alpha^*|-\delta}-\tau_{v,v'}\right)\right|>0\right)\\
= &  \sum_{v \in V^{(n)}} \int_0^{\frac{\log n}{|\alpha^*|-\delta}} \sum_{v' =1}^{d_v} \mathbb{P}\left( \ind(\tau_{v,v'} \leq t) \left|Z_{v,v'}\left( \frac{\log n}{|\alpha^*|-\delta}-\tau_{v,v'}\right)\right|>0\right) L_v(dt)\\
%+ \sum_{v \in V^{(n)}} \int_{\frac{\log n}{|\alpha^*|-\delta}}^{\infty} \sum_{v' =1}^{d_v} \mathbb{P}\left( \ind(\tau_{v,v'} \leq t) \left|Z_{v,v'}\left( \frac{\log n}{|\alpha^*|-\delta}-\tau_{v,v'}\right)\right|>0\right) L_v(dt)\\
= & \sum_{v \in V^{(n)}}  d_v \int_0^{\frac{\log n}{|\alpha^*|-\delta}}\int_0^t \beta e^{-\beta s} \mathbb{P}\left(  \left|Z_{v,v'}\left( \frac{\log n}{|\alpha^*|-\delta}-s\right)\right|>0\right) ds L_v(dt)
\end{split}
\end{equation*}
By Lemma \ref{sublem} we know that there exists a constant $c_3>0$ such that $\mathbb{P}(Z_{v,v'}(t) >0) \leq c_3 e^{-|\alpha_{\epsilon}^+|t}$ for all $t >0$.
Therefore, 
\begin{equation*}
 \begin{split}
\ & \sum_{v \in V^{(n)}\setminus v_0}\mathbb{P}\left(
 \left|Z_v\left( \frac{\log n}{|\alpha^*|-\delta}-(\sigma(v)-\sigma(\zeta(v))\right)\right|>0\right)\\
\leq & \  c_3\sum_{v \in V^{(n)}}  d_v \int_0^{\frac{\log n}{|\alpha^*|-\delta}}\int_0^t \beta e^{-\beta s} 
 e^{-|\alpha_{\epsilon}^+| ( \frac{\log n}{|\alpha^*|-\delta}-s)}
  ds L_v(dt)\\
=  & \ c_3\sum_{v \in V^{(n)}}  d_v \int_0^{\frac{\log n}{|\alpha^*|-\delta}}\int_s^{\frac{\log n}{|\alpha^*|-\delta}}L_v(dt) \beta e^{-\beta s} 
 e^{-|\alpha_{\epsilon}^+| ( \frac{\log n}{|\alpha^*|-\delta}-s)} ds \\
\leq & \  c_3\sum_{v \in V^{(n)}}  d_v \int_0^{\frac{\log n}{|\alpha^*|-\delta}}\mathbb{P}(L>s) \beta e^{-\beta s} 
 e^{-|\alpha_{\epsilon}^+| ( \frac{\log n}{|\alpha^*|-\delta}-s)} ds\\
= & \  c_3\sum_{v \in V^{(n)}}  d_v \int_0^{\frac{\log n}{|\alpha^*|-\delta}}\mathbb{P}(L>s) \beta e^{-\beta s} e^{|\alpha_{\epsilon}^+| s}
 n^{-  \frac{|\alpha_{\epsilon}^+|}{|\alpha^*|-\delta}} ds
\end{split}
\end{equation*}
We can choose $\epsilon>0$, such that $\alpha^+_{\epsilon}$ exists and $|\alpha^+_{\epsilon}| \in (|\alpha^*|-\delta,|\alpha^*|)$.
So,  using \eqref{endmalt1}
the above expression is bounded from above by
\begin{multline}
\label{intermed}
c_3  n^{-  \frac{|\alpha_{\epsilon}^+|}{|\alpha^*|-\delta}} \sum_{v \in V^{(n)}}  d_v \int_0^{\infty}\mathbb{P}(L>s) \beta e^{-\beta s} e^{|\alpha^*| s}
 ds\\
\leq c_3  n^{1-  \frac{|\alpha_{\epsilon}^+|}{|\alpha^*|-\delta} }\frac{\ell(n)}{n} \frac{1}{\mathbb{E}\left[(\tilde{D}-1)Q^{\tilde{D}-2}\right]}
 \end{multline}
 And by $|\alpha^+_{\epsilon}|>|\alpha^*|-\delta$, we know that $n^{1-  \frac{|\alpha_{\epsilon}^+|}{|\alpha^*|-\delta} } \to 0$. Furthermore, by assumption A1, $\frac{\ell(n)}{n}$ converges to a finite limit and $\mathbb{E}\left[(\tilde{D}-1)Q^{\tilde{D}-2}\right]$ is necessarily positive. 
So, \eqref{intermed} converges to 0 as $n \to \infty$ and therefore 
$$ \sum_{v \in V_{*}^{(n)}(T'_{\gamma}(n))}
\mathbb{P}\left( J_v^{(n)} \left( \frac{\log n}{|\alpha^*|-\delta}-(\sigma(v)-T'_{\gamma}(n))\right) \neq \emptyset  \mid \mathcal{M}^{(n)}\right) \to 0.$$
\end{proof}

\begin{proof}[Proof of Lemma \ref{endlemdown}]
Throughout the proof we restrict ourselves to the event $\mathcal{M}^{(n)} \cap \mathcal{A}^{(n)}(\epsilon)$ (defined as in Lemma \ref{eeslemma2}) for some $\epsilon>0$ conveniently chosen. Because there exists $c_1>0$ such that $|S^{(n)}(t_1^{(n)}(\epsilon))|-|S^{(n)}(\infty)| > c_1 n$ w.h.p.,  we  immediately obtain that there exists $\gamma \in (0,1-q^*)$ such that $T'_{\gamma}(n) \in (t_1^{(n)}(\epsilon),T^*(n))$ w.h.p.
The proof consists of the following steps:

\begin{enumerate}
 \item Show that there exists with high probability a constant $\gamma>0$ such that for  $v \in V_*^{(n)}(T'_{\gamma}(n))$ and for $\delta \in (0,|\alpha^*|)$ small enough, we can construct a  branching process which is dominated by $\{J_v^{(n)}(t);t \geq 0\}$ and has Malthusian parameter larger than $-(|\alpha^*|+ \delta)$ (i.e.\ the absolute value of the Malthusian parameter is less than $|\alpha^*|+ \delta)$.
 \item Show that there exists $\gamma>0$ and $\delta>0$ such that the dominated branching process satisfy the conditions of Lemma \ref{sublem}.
 \item Show that there exist $c_1>0$ such that $$\mathbb{P}\left(n^{-1}|V_*^{(n)}(T'_{\gamma}(n))|>c_1\mid \mathcal{M}^{(n)}\right) \to 1.$$ 
 \item Show that for every $\delta \in (0,1)$, there exist $\gamma>0$, such that 
 $$\mathbb{P}\left(T^{*}(n)-T'_{\gamma}(n) > \frac{\log n}{|\alpha^*|+\delta}\mid\mathcal{M}^{(n)}\right) \to 1.$$ 
\end{enumerate}

\noindent{\textbf{Step 1:}}

\noindent Let $\epsilon >0$ be small and chosen appropriately later. Recall the definitions (\ref{kappat}) and (\ref{piknt}).
For $t_1^{(n)}(\epsilon)$ as in Lemma \ref{eeslemma2}, and $t>t_1^{(n)}(\epsilon)$ and on $\mathcal{A}^{(n)}(\epsilon)$, we have
\begin{equation}
\label{gammalowbound}
\kappa^{(n)}(t) \geq \frac{|\mathcal{E}^{(n)}_S(\infty)|}{\ell(n)-|\mathcal{E}^{(n)}_P(t_1^{(n)}(\epsilon))|-1}
\geq \frac{\mathbb{E}[(Q-\epsilon)^{\tilde{D}}] \ell(n)}{(Q+ \epsilon)^2 \ell(n)}
= \kappa_-(\epsilon),
\end{equation}
where the last equality is the definition of $\kappa_-(\epsilon)$.
Similarly, for $t>t_1^{(n)}(\epsilon)$, on $\mathcal{A}^{(n)}(\epsilon)$ and for $k \in \mathbb{N}_{\leq \lfloor 1/\epsilon \rfloor}$, 
\begin{multline}
\label{lowboundD}
\pi_{\geq k}^{(n)}(t) = \frac{\sum_{v\in S^{(n)}(t-)} d_v \ind(d_v \geq k)}{|\mathcal{E}^{(n)}_S(t-)|} \geq \frac{\sum_{v\in S^{(n)}(\infty)} d_v \ind(d_v \geq k)}{|\mathcal{E}^{(n)}_S(t_1^{(n)}(\epsilon))|}\\
\geq \frac{\mathbb{E}[(\ind(\tilde{D}\geq k)(Q - \epsilon)^{\tilde{D}}]\ell(n)}{\mathbb{E}[(Q+\epsilon)^{\tilde{D}}] \ell(n)} 
=
\mathbb{P}(\tilde{D}^{-}(\epsilon) \geq k),
\end{multline}
where again the last equality serves as a definition.
To be complete, let $\mathbb{P}(\tilde{D}^{-}(\epsilon) >  \lfloor 1/\epsilon \rfloor) =0$.
So, $\tilde{D}^{-}(\epsilon) \ind(\mathcal{A}^{(n)}(\epsilon))$ is stochastically dominated by the random variable defined through $\pi_{\geq k}^{(n)}(t)$ for $t>t^{(n)}_1(\epsilon)$.  

As in the proof of Lemma \ref{lemint2}, if
$v$ is a vertex infected at time $t$. Then $v$ has degree distribution defined through $\pi_{\geq k}^{(n)}(t)$. 
One of the $d_v$ half-edges attached to $v$ is paired at time $t$, while the other $d_v-1$ are still unpaired at time $t$. 
Again as in the proof of Lemma \ref{lemint2},
let $L_v$ be the infectious period of $v$ and let $\tau_{v,1}, \tau_{v,2}, \cdots \tau_{v,d_v-1}$ be independent exponentially distributed random variables with expectation $1/\beta$ assigned to the different unpaired half-edges of $v$. If $\tau_{v,i} \leq L_v$ then a contact made by $v$ at time $t+\tau_{v,i}$ is with a susceptible with probability $\kappa^{(n)}(t)$. 

Let $L'_{\epsilon} = \min(L,1/\epsilon)$, be a random variable representing a life length, which is distributed as $L$ cut off at length $1/\epsilon$. By \eqref{gammalowbound} and \eqref{lowboundD} %\eqref{upprv} and \eqref{lowrv} 
we  obtain that for all $v \in V_*^{(n)}(t_1)$, the process
$\{|J_v^{(n)}(t)|;t \geq 0\}$ stochastically dominates a branching process in which particles give birth at ages given by the point process 
$$\{\hat{\xi}^-_{\epsilon}(t);t \geq 0\} 
= \left\{\sum_{k=1}^{\tilde{D}^{-}(\epsilon)-1} \ind(\tau_{k}<\min(L'_{\epsilon},t)) Y_k^-(\epsilon)  ;t \geq 0\right\},$$ 
where $Y_k^-(\epsilon)$ is a Bernoulli random variable with success probability $\kappa_-(\epsilon)$, $\tau_1,\tau_2, \cdots$ are exponentially distributed random variables with expectation $1/\beta$ and all defined random variables are independent.
 
The mean offspring measure of this branching process is  given by
$$\{\mu^-_{\epsilon}(t);t \geq 0\}= \mathbb{E}[\tilde{D}^{-}(\epsilon)-1]\kappa_-(\epsilon) \mathbb{P}(\tau_{k}<\min(L'_{\epsilon},t)),$$
where 
\begin{equation*}\label{gammalowex} 
\kappa_-(\epsilon) = \frac{\mathbb{E}[(Q-\epsilon)^{\tilde{D}}]}{(Q+ \epsilon)^2}
\end{equation*}
by \eqref{gammalowbound}. Further,
\begin{equation}\label{explowbp}
\mathbb{E}[\tilde{D}^{-}(\epsilon)] = \sum_{k=1}^{\lfloor 1/\epsilon \rfloor} \frac{\mathbb{E}[(\ind(\tilde{D}\geq k)(Q - \epsilon)^{\tilde{D}}]}{\mathbb{E}[(Q+\epsilon)^{\tilde{D}}]} 
= \frac{\mathbb{E}[\tilde{D}(Q - \epsilon)^{\tilde{D}}\ind(\tilde{D} \leq \lfloor 1/\epsilon \rfloor)]}{\mathbb{E}[(Q+\epsilon)^{\tilde{D}}]}.
\end{equation}
Finally, 
$$
\mathbb{P}(\tau_{k}<\min(L'_{\epsilon},t)) =
\int_0^{t}\beta e^{-\beta s} \mathbb{P}(L'>s)ds
=
\int_0^{\min(t,1/\epsilon)}\beta e^{-\beta s} \mathbb{P}(L>s)ds.
$$
So,
\begin{multline*}
\mu^-_{\epsilon}(dt)\\
= \left(\frac{\mathbb{E}[\tilde{D}(Q - \epsilon)^{\tilde{D}}\ind(\tilde{D} \leq \lfloor 1/\epsilon \rfloor)]}{\mathbb{E}[(Q+\epsilon)^{\tilde{D}}]}-1\right) \frac{\mathbb{E}[(Q-\epsilon)^{\tilde{D}}]}{(Q+ \epsilon)^2} \beta e^{-\beta t} \mathbb{P}(L>t)\ind(t<1/\epsilon)dt\\
= K_-(\epsilon) \ind(t<1/\epsilon) \mu^*(dt),%\label{mueps-}
\end{multline*}
where 
\begin{equation}
\label{Kmin}
K_-(\epsilon) = 
\left(\frac{\mathbb{E}[\tilde{D}(Q - \epsilon)^{\tilde{D}}\ind(\tilde{D} \leq \lfloor 1/\epsilon \rfloor)]-\mathbb{E}[(Q+\epsilon)^{\tilde{D}}]}{\mathbb{E}[(\tilde{D}-1)Q^{\tilde{D}-2}]}\right) 
\frac{\mathbb{E}[(Q-\epsilon)^{\tilde{D}}]}{\mathbb{E}[(Q+\epsilon)^{\tilde{D}+2}]}
\end{equation}
and  $\mu^*(dt)$ is defined in \eqref{endmalt2}.

Because   there exists with high probability $\gamma \in (0,1-q^*)$ such that $T'_{\gamma}(n) \in (t_1^{(n)}(\epsilon),T^*(n))$, we obtain that with high probability and for all $v \in V_*^{(n)}(T'_{\gamma}(n))$, we have constructed branching process with reproduction process $\{\hat{\xi}^-_{\epsilon}(t);t \geq 0\}$ and mean offspring measure $\{\mu^-_{\epsilon}(t);t \geq 0\}$, which is stochastically dominated by the process $\{|J_v^{(n)}(t)|;t \geq 0\}$.

Note that all expectations in \eqref{Kmin} are finite, that $K_-(\epsilon) <1$ and that $K_-(\epsilon) \nearrow 1$ as $\epsilon \searrow 0$.

By definition of $\alpha^*$ we know that for all $\delta>0$, 
$$\int_0^{\infty} e^{-(\alpha^* - \delta) t}  \mu^*(dt)>1.$$ 
So, for all $\delta>0$, there exist $\epsilon_0= \epsilon_0(\delta)>0$ such that for all $\epsilon \in (0,\epsilon_0)$
$$\int_0^{\infty} e^{-(\alpha^* - \delta) t} \mu^{-}_{\epsilon}(dt) =
\int_0^{1/\epsilon} K_-(\epsilon) e^{-(\alpha^* - \delta) t} \mu^*(dt) \geq 1.
$$
We also know that for all $\epsilon>0$,
$\int_0^{1/\epsilon} e^{-x t}  \mu^*(dt)$ is continuous is $x$ and 
$\int_0^{1/\epsilon} e^{-x t}  \mu^*(dt)<1$ for all $x>\alpha^*$, which implies that for all $\epsilon>0$.
$\int_0^{1/\epsilon} e^{-\alpha^* t}  \mu^*(dt) \leq 1$. 
In particular,
$$ \int_0^{\infty} e^{-\alpha^*  t} \mu^{-}_{\epsilon}(dt)
= \int_0^{1/\epsilon} K_-(\epsilon) e^{-\alpha^*  t} \mu^*(dt)  
<\int_0^{1/\epsilon} e^{- \alpha^*  t} \mu^*(dt) \leq 1.$$
So, for all $\delta >0$ there exists $\epsilon_0= \epsilon_0(\delta)>0$ such that for all $\epsilon \in (0,\epsilon_0)$,  there exists $\alpha_{\epsilon}^-\in (\alpha^*,\alpha^*+\delta)$ such that 
$$ \int_0^{\infty} e^{-\alpha^-_{\epsilon}  t} \mu^{-}_{\epsilon}(dt) =1.$$
This concludes Step 1 of the proof.
\\ 

\noindent\textbf{Step 2:}

\noindent In this step we wish to show that there exists $\epsilon>0$ such that
\begin{enumerate}[label=(\roman*)]
\item there exists $\alpha^-_{\epsilon}<0$ such that $1 = \int_0^{\infty} e^{-\alpha^-_{\epsilon}t}  \mu^-_{\epsilon}(dt)$,
\item $\int_0^{\infty} t e^{|\alpha^-_{\epsilon}| t} L'_{\epsilon}(dt)<\infty$,
\item $\int_0^{\infty} t e^{|\alpha^-_{\epsilon}| t} \mu^-_{\epsilon}(dt)<\infty$ and
\item $\mathbb{E}\left[\int_0^{\infty}e^{|\alpha^-_{\epsilon}| t}\hat{\xi}_{\epsilon}^-(dt) \log^+(\xi(\infty))\right]< \infty$.
\end{enumerate}

We note first that since $\mu^-_{\epsilon}(\cdot)$ has mass on a bounded interval, (i) and (iii) are trivially satisfied.
Similarly, because $L'_{\epsilon}$ has bounded support (ii) is also satisfied. 

Finally, 
\begin{equation*}
\begin{split}
\ & \  \mathbb{E}\left[\int_0^{\infty}e^{|\alpha^-_{\epsilon}| t}\hat{\xi}_{\epsilon}^-(dt) \log^+(\hat{\xi}_{\epsilon}^-(\infty))\right]\\
\leq & \  \mathbb{E}\left[\int_0^{\infty}e^{|\alpha^-_{\epsilon}| t}\hat{\xi}_{\epsilon}^-(dt) \log^+(\tilde{D}^-(\epsilon)-1))\right]\\
= & \ \mathbb{E}[(\tilde{D}^{-}(\epsilon)-1) \log^+(\tilde{D}^-(\epsilon)-1)] \kappa_-(\epsilon)
\int_0^{\infty}e^{|\alpha^-_{\epsilon}| t} \beta e^{-\beta t} P(L'>t) dt\\
\leq & \ \mathbb{E}[(\tilde{D}^{-}(\epsilon)-1)^2] \kappa_-(\epsilon)
\int_0^{1/\epsilon}e^{|\alpha^-_{\epsilon}| t} \beta e^{-\beta t} P(L>t) dt\\
= & \ \frac{\mathbb{E}[(\tilde{D}^{-}(\epsilon)-1)^2]}{\mathbb{E}[\tilde{D}^{-}(\epsilon)-1]} 
\int_0^{1/\epsilon}e^{|\alpha^-_{\epsilon}| t} \mu^-_{\epsilon}(dt).
\end{split}
\end{equation*}
It follows from Claim \ref{dalem} that the quotient of expectations is  finite, while the integral is trivially finite. So assumption (iv) is met.\\

\noindent\textbf{Step 3: }

\noindent Let $\gamma \in (0,1-q^*)$ and $\gamma' \in (\gamma, 1-q^*)$. 
By the definition $$T'_{\gamma}(n) = \inf \{t>0; n^{-1}|S^{(n)}(t)|<1-\gamma\},$$ we obtain that if $T'_{\gamma'}(n)<\infty$, then $n^{-1}|S^{(n)}(t)|\geq 1-\gamma'$ for $t<T'_{\gamma'}(n)$ and in particular,
$n^{-1}|S^{(n)}(T'_{\gamma}(n))|\geq 1-\gamma'$.
 Combined with Lemma \ref{lemint1} this gives that $|S^{(n)}(T'_{\gamma}(n))|-|S^{(n)}(\infty)| = \theta(n)$ w.h.p.

For $t>0$, let $V_*^{(n)}(t)$ be as before the set of vertices infected after time $t$, which are infected by vertices infected before time $t$.  
Assume that  $|V_{*}^{(n)}(t)|= o(n)$.
From Step 1 we know that $|S^{(n)}(T'_{\gamma}(n))|-|S^{(n)}(\infty)|$ is stochastically smaller than the total progeny of $|V_{*}^{(n)}(t)|$ sub-critical branching processes with mean offspring measure $\mu^+_{\epsilon}$ and thus expected total number of children per particle $\mu^+_{\epsilon}(\infty)$. However the total size of such a branching process has expected size $(1-\mu^+_{\epsilon}(\infty))^{-1} = \theta(1)$. This implies that if $|V_{*}^{(n)}(T'_{\gamma}(n))|= o(n)$, then $\mathbb{E}[|S^{(n)}(T'_{\gamma}(n))|-|S^{(n)}(\infty)|] = o(n)$, which implies that $ |S^{(n)}(T'_{\gamma}(n)|-|S^{(n)}(\infty)|  = o(n)$ w.h.p.,
which is a contradiction. This finishes step 3.\\

\noindent \textbf{Step 4:}

\noindent Let $\delta \in (0,|\alpha^*|)$.  
We can and do choose $\epsilon>0$, such that $\alpha^-_{\epsilon}$ exists and $|\alpha^-_{\epsilon}| \in (|\alpha^*|,|\alpha^*| + \delta/2)$. By Lemma \ref{eeslemma2} we know that $\mathbb{P}(t_1^{(n)}(\epsilon)<T^*(n)|\mathcal{M}^{(n)}) \to 1$ and we choose $\gamma$ such that  $T'_{\gamma}(n) \in (t_1^{(n)}(\epsilon),T^*(n))$. 

Observe  that for  $t>T'_{\gamma}(n)$,  
\begin{multline}
|I^{(n)}(t)|= \sum_{v \in  V_{*}^{(n)}(T'_{\gamma}(n))} |J_v(t-\sigma(v))| + |I^{(n)}(t) \cap I^{(n)}(T'_{\gamma}(n))|\\
\geq \sum_{v \in  V_{*}^{(n)}(T'_{\gamma}(n))} |J_v(t-\sigma(v))|,
\end{multline}
where $J_v(s)=0$ for $s<0$.

Recall that  $\{J_v(t); t \geq 0\}$ dominates a branching process with mean offspring measure $\{\mu_{\epsilon}^-(t); t \geq 0\}$. Consider a sequence of i.i.d.\ copies of this branching process indexed by $v \in |V_{*}^{(n)}(T'_{\gamma}(n))|$ and let $Z^-_{\epsilon,v}(t)$ be the number of alive particles in the copy indexed by $v$ at time $t$.
So, $|I^{(n)}(t)|$ is stochastically larger than 
$\sum_{v \in V_{*}^{(n)}(T'_{\gamma}(n))} Z^-_{\epsilon,v}(t-\sigma(v))$.
By the independence of the branching processes we then  obtain that
\begin{equation}
\label{sumprob3}
\begin{split}
\ & \mathbb{P}(|I^{(n)}(T'_{\gamma}(n)+t )|=0\mid V_{*}^{(n)}(T'_{\gamma}(n)))\\
\leq & \displaystyle\prod_{v \in V_{*}^{(n)}(T'_{\gamma}(n))} \mathbb{P}(Z^-_{\epsilon,v}(T'_{\gamma}(n)+t-\sigma(v))=0\mid V_{*}^{(n)}(T'_{\gamma}(n)))\\
\leq & \displaystyle\prod_{v \in V_{*}^{(n)}(T'_{\gamma}(n))} \mathbb{P}(Z^-_{\epsilon,v}(t)=0)\\
= & \left( \mathbb{P}(Z^-_{\epsilon,1}(t)=0\right)^{|V_{*}^{(n)}(T'_{\gamma}(n))|}\\
= &  \mathbb{P}\left(Z^-_{\epsilon,1}(t)=0\mid Z^-_{\epsilon,v}(0)= |V_{*}^{(n)}(T'_{\gamma}(n))|\right).
\end{split}
\end{equation}
For the second inequality we used that $\{Z^-_{\epsilon,k}(t)=0\}$ is increasing in $t$.
Using the above gives us for all $c_1 \in (0,1)$
\begin{equation}\label{sumprob4}
\begin{split}
\ & \ \mathbb{P}(|I^{(n)}(T'_{\gamma}(n)+t )|=0\mid \mathcal{M}^{(n)})\\
= & \
\mathbb{P}(|I^{(n)}(T'_{\gamma}(n)+t )|=0 \cap |V_{*}^{(n)}(T'_{\gamma}(n))| > c_1 n \mid \mathcal{M}^{(n)})\\
\ & \ +\mathbb{P}(|I^{(n)}(T'_{\gamma}(n)+t )|=0 \cap |V_{*}^{(n)}(T'_{\gamma}(n))| \leq c_1 n \mid \mathcal{M}^{(n)})\\
\leq & \ \mathbb{P}\left(|I^{(n)}(T'_{\gamma}(n)+t )|=0\mid |V_{*}^{(n)}(T'_{\gamma}(n))| > c_1 n\right) \mathbb{P}(|V_{*}^{(n)}(T'_{\gamma}(n))| > c_1 n\mid \mathcal{M}^{(n)})\\ 
\ & \ + \mathbb{P}(|V_{*}^{(n)}(T'_{\gamma}(n))| \leq c_1 n\mid \mathcal{M}^{(n)})\\
\leq & \ \mathbb{P}\left(|I^{(n)}(T'_{\gamma}(n)+t )|=0\mid |V_{*}^{(n)}(T'_{\gamma}(n))| > c_1 n\right)\\
\ & \ + \mathbb{P}(|V_{*}^{(n)}(T'_{\gamma}(n))| \leq c_1 n\mid \mathcal{M}^{(n)})\\
\leq & \ \mathbb{P}\left(Z^-_{\epsilon,1}(t)=0\mid Z^-_{\epsilon,v}(0)= c_1 n\right) 
+ \mathbb{P}(|V_{*}^{(n)}(T'_{\gamma}(n))| \leq c_1 n\mid \mathcal{M}^{(n)}).
\end{split}
\end{equation}

Now we can apply Corollary \ref{bpsubcor}, which gives that for all $\delta>0$
$$
\mathbb{P}\left (Z^-_{\epsilon,1}\left(\frac{\log c_1 n}{|\alpha_{\epsilon}^-|+\delta/3}\right)=0\mid |Z^-_{\epsilon,1}(0)|= c_1 n\right) \to 0.
$$
For sufficiently large $n$ we have $\frac{\log c_1 n}{|\alpha_{\epsilon}^-|+\delta/3} \geq 
\frac{\log n}{|\alpha_{\epsilon}^-|+\delta/2}$ and thus we obtain
$$
\mathbb{P}\left (Z^-_{\epsilon,1}\left(\frac{\log n}{|\alpha_{\epsilon}^-|+\delta/2}\right)=0\mid |Z^-_{\epsilon,1}(0)|= c_1 n\right) \to 0.
$$
By step 3 we also know that there exists $c_1 \in (0,1)$ such that $$\mathbb{P}(|V_{*}^{(n)}(T'_{\gamma}(n))| \leq c_1 n\mid \mathcal{M}^{(n)}) \to 0.$$
So, from \eqref{sumprob4} we obtain
$$\mathbb{P}\left(|I^{(n)}\left(T'_{\gamma}(n)+  \frac{\log n}{|\alpha_{\epsilon}^-|+\delta/2}\right)|=0\mid \mathcal{M}^{(n)}\right) \to 0.$$
By $|\alpha^-_{\epsilon}| <|\alpha^*|+ \delta/2$ we then obtain 
$$
\mathbb{P}\left(|I^{(n)}\left(T'_{\gamma}(n)+\frac{\log n}{|\alpha^*|+ \delta} \right)|=0\mid \mathcal{M}^{(n)}\right) \to 0.
$$
%Because $1-\delta < \frac{1-\delta/2}{1+\delta/2}$, this implies that 
%$$
%\mathbb{P}\left(|I^{(n)}\left(T'_{\gamma}(n)+ (1-\delta)\frac{\log n}{|\alpha_{\epsilon}^-|} \right)|=0\right) \to 0,$$
This in turn leads to
\begin{multline*}
\mathbb{P}\left(T^*(n)-T'_{\gamma}(n)< \frac{\log n}{|\alpha^*|+ \delta}\mid \mathcal{M}^{(n)}\right) \\
=
\mathbb{P}\left(|I^{(n)}\left(T'_{\gamma}(n))+\frac{\log n}{|\alpha^*+\delta|}\right)|=0\mid \mathcal{M}^{(n)}\right) \to 0.
\end{multline*}
and the proof is complete.
\end{proof}

\subsection{Proof of Theorem \ref{mainthmsec}}\label{Secproofmainthmsec}

In this section we show how Theorem \ref{mainthmsec} relatively straightforward follows from the proof of Theorem \ref{mainthm}.

The way we prove it is to show that for every $\eta>0$ with high probability no vertex in the population is infectious while having at least one susceptible neighbor for a period at least $(|\alpha^*|-\eta)^{-1}\log n$. Furthermore, we show that for all vertices infected after time $T'_{\gamma}(n)$ (as defined in \eqref{Tprime}) condition \eqref{equivalence} is satisfied if we replace $L$ by $L'$ as defined in \eqref{Lprime}. Then we can use the proof of Lemma \ref{endlemup} with the replacement for $L$, while Lemma \ref{endlemdown} holds irrespective of the distribution of $L$.

\begin{proof}[Proof of Theorem \ref{mainthmsec}]
Observe that 
\begin{equation}
\begin{split}
\label{cutoffproof1}
\ & \ \mathbb{P}\left(\displaystyle\max_{v\in V^{(n)}} L'_v>t\right)\\
= & \ 1-\displaystyle\prod_{v\in V^{(n)}} \left[1-\mathbb{P}(L_v>t)\left(1-(1-e^{-\beta t})^{d_v}\right) \right]\\
\leq & \  \displaystyle\sum_{v\in V^{(n)}} \mathbb{P}(L>t)\left(1-(1-e^{-\beta t})^{d_v}\right)\\
\leq & \ \displaystyle\sum_{v\in V^{(n)}} \mathbb{P}(L>t) d_v e^{-\beta t}\\
= & \ \mathbb{P}(L>t) \ell(n) e^{-\beta t}. 
\end{split}
\end{equation}
We also know from the definition of $\alpha^*$ (see \eqref{endmalt1} and \eqref{endmalt2}) that
\begin{equation}\label{cutfinite}
\int_0^{\infty} e^{(|\alpha^*|-\eta)t} e^{-\beta t} \mathbb{P}(L>t) dt < \infty \qquad \mbox{for every $\eta\in (0,\alpha^*)$}
\end{equation}
and thus that 
\begin{equation*}
e^{(|\alpha^*|-\eta)t} e^{-\beta t} \mathbb{P}(L>t) \to 0 \qquad \mbox{as $t \to \infty$.}
\end{equation*}
which, after filling in $t= (|\alpha^*|-\eta)^{-1} \log n$ implies
\begin{equation}\label{cutofflim}
n^{1-\frac{\beta}{|\alpha^*|-\eta}} \mathbb{P}\left(L>\frac{\log n}{|\alpha^*|-\eta}\right) \to 0 \qquad \mbox{as $n \to \infty$.}
\end{equation}
Filling in $t= (|\alpha^*|-\eta)^{-1} \log n$ in \eqref{cutoffproof1} we obtain that
$$\mathbb{P}\left(\displaystyle\max_{v\in V^{(n)}} L'_v>\frac{\log n}{|\alpha^*|-\eta}\right) \leq 
\mathbb{P}\left(L>\frac{\log n}{|\alpha^*|-\eta}\right) \ell(n) n^{-\frac{\beta}{|\alpha^*|-\eta}},
$$
which by \eqref{cutofflim} and $\ell(n) = O(n)$ implies that for all $\eta\in (0,\alpha^*)$, 
$$\mathbb{P}\left(\displaystyle\max_{v\in V^{(n)}} L'_v>\frac{\log n}{|\alpha^*|-\eta}\right) \to 0.$$
So with high probability no vertex infected before time $T'_{\gamma}(n)$ is both still infectious and has susceptible neighbors at time $T'_{\gamma}(n)+ \frac{\log n}{|\alpha^*|-\eta}$.

Our next step is to observe that the epidemic spread does not change if for all $v \in V^{(n)}$ we say that $v$ recovers $L'_v$ instead of $L_v$ time units after the infection time $\sigma(v)$.

in the first step of the proof of Lemma \ref{endlemup} we can then replace $L$ by a random variable $L'$, with a distribution defined through
\begin{multline*}
\mathbb{P}(L' >t)  =  \mathbb{P}(L>t) \mathbb{E}\left[1-(1-e^{-\beta t})^{\tilde{D}^{(n),+}(\epsilon) }\right]\\
 \leq \mathbb{P}(L>t) \mathbb{E}\left[\tilde{D}^{(n),+}(\epsilon)\right]e^{-\beta t}.
\end{multline*}
We further use that by Claim \ref{dalem}  $\mathbb{E}\left[\tilde{D}^{(n),+}(\epsilon)\right] < \infty$.

We may apply Lemma \ref{endlemup} with $L$ replaced by $L'$ and check whether condition \eqref{equivalence} holds:
\begin{equation}
\int_0^{\infty} e^{(|\alpha^*|-\eta)t} \mathbb{P}(L'>t) dt \leq
\int_0^{\infty} e^{(|\alpha^*|-\eta)t} \mathbb{P}(L>t)  \mathbb{E}\left[\tilde{D}^{(n),+}(\epsilon)\right]e^{-\beta t} dt,
\end{equation}
which is indeed finite by \eqref{cutfinite}.

\end{proof}

\appendix
\section{Proof of Lemma \ref{eeslemma2}}\label{Appendixsec}

In this appendix we prove Lemma \ref{eeslemma2}. We repeat some definitions and the statement of the Lemma.

Recall from \eqref{Qdef} that $Q= 1-\psi+\psi \tilde{q}^*$ and let
$\{\mathcal{X}^{(n)}(t);t \geq 0\}$
be defined as in Section \ref{sec:construct}.
Furthermore, for all $n \in \mathbb{N}$ and all $k \in \mathbb{N}$  we define 
$$\hat{d}^{(n)}_k = \sup_{n' \geq n}\mathbb{P}(D^{(n')} = k).$$
For all $\epsilon \in (0,\psi (1-\tilde{q}^*))= (0,1-Q)$ we define $\mathcal{A}^{(n)}_1(\epsilon) = \{ t_1^{(n)}(\epsilon)< \infty\}$, where
 $$t_1^{(n)}(\epsilon) = \max(t_a^{(n)}(\epsilon) ,t_b^{(n)}(\epsilon) ,t_c^{(n)}(\epsilon)),$$
and
\begin{equation*}
\begin{split}
t_a^{(n)}(\epsilon) & = \inf\{t>0;|\mathcal{E}_S^{(n)}(t)| \leq \mathbb{E}[(Q + \epsilon)^{\tilde{D}}]\ell(n)\}, \\
t_b^{(n)}(\epsilon)& = \inf\{t>0;|\mathcal{E}_P^{(n)}(t)| \geq  \ell(n) -1 - (Q + \epsilon)^2 \ell(n)\},\\
t_c^{(n)}(\epsilon)& = \inf\{t>0; \sum_{v\in S^{(n)}(t)} d_v \ind(d_v \geq k)  \leq
\sum_{j=k}^{\infty} n j \hat{d}^{(n)}_j (Q + \epsilon)^j
\mbox{ for all $k \in \mathbb{N}$}\}.
\end{split}
\end{equation*}
We also 
defined the event $\mathcal{A}^{(n)}_2(\epsilon)$, which is the event that the following holds.
\begin{eqnarray*}
|\mathcal{E}_S^{(n)}(\infty)| & > & \mathbb{E}[(Q - \epsilon)^{\tilde{D}}] \ell(n), \\
|\mathcal{E}_P^{(n)}(\infty)| & < &  \ell(n) -1 - (Q - \epsilon)^2 \ell(n), \\
\sum_{v\in S^{(n)}(\infty) } d_v \ind(d_v \geq k) & \geq & \sum_{j=k}^{\infty} n j \mathbb{P}(D= j) 
(Q - \epsilon)^{j} \mbox{ for all $k \in \mathbb{N}_{\leq \lfloor 1/\epsilon \rfloor}$}.
\end{eqnarray*}
Finally, $\mathcal{A}^{(n)}(\epsilon)=\mathcal{A}^{(n)}_1(\epsilon)\cap \mathcal{A}^{(n)}_2(\epsilon)$.

\begin{lemma}
For all $\epsilon \in (0,\psi (1-\tilde{q}^*))$, it holds that $\mathbb{P}(\mathcal{A}^{(n)}(\epsilon)|\mathcal{M}^{(n)}) \to 1$ and there exists $c_1>0$, such that 
$$\mathbb{P}(|S^{(n)}(t_1^{(n)}(\epsilon))|-|S^{(n)}(\infty)| > c_1 n|\mathcal{M}^{(n)}) \to 1.$$ 
\end{lemma}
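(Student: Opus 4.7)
The plan is to reduce both assertions to in-probability limits, conditional on $\mathcal{M}^{(n)}$, for the final-state quantities
\begin{equation*}
\frac{|\mathcal{E}^{(n)}_S(\infty)|}{\ell(n)} \parrow \mathbb{E}[Q^{\tilde{D}}], \qquad \frac{|\mathcal{E}^{(n)}_P(\infty)|}{\ell(n)} \parrow 1-Q^2,
\end{equation*}
and, for each fixed $k\in\mathbb{N}$, $n^{-1}\sum_{v\in S^{(n)}(\infty)} d_v\ind(d_v\geq k) \parrow \sum_{j\geq k} j\mathbb{P}(D=j) Q^j$, where $Q=1-\psi+\psi\tilde{q}^*$. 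The first and third follow because a vertex of degree $d$ escapes the epidemic with probability tending to $Q^d$; the second because an edge-wise susceptibility-set calculation shows each edge of the configuration model is traversed (both half-edges in $\mathcal{E}^{(n)}_P(\infty)$) with asymptotic probability $1-Q^2$. These are obtained by the same method that underlies Lemma \ref{lemint1} and \cite[Thm.~3.4]{ball2014epidemics}, applied to half-edge and edge statistics.

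Given these limits, $\mathcal{A}^{(n)}_2(\epsilon)$ holds w.h.p.\ because every bound has strictly positive slack from its limit: $\mathbb{E}[(Q-\epsilon)^{\tilde{D}}]<\mathbb{E}[Q^{\tilde{D}}]$, $(Q-\epsilon)^2<Q^2$, and $\sum_{j\geq k} j\mathbb{P}(D=j)(Q-\epsilon)^j<\sum_{j\geq k}j\mathbb{P}(D=j) Q^j$ whenever $\mathbb{P}(D\geq k)>0$, with both sides vanishing otherwise by assumption (A4). A finite union bound over $k\leq\lfloor 1/\epsilon\rfloor$ completes this step. The event $\mathcal{A}^{(n)}_1(\epsilon)$ then follows by monotonicity: the processes $|\mathcal{E}^{(n)}_S(\cdot)|$ and $\sum_{v\in S^{(n)}(\cdot)}d_v\ind(d_v\geq k)$ are non-increasing, $|\mathcal{E}^{(n)}_P(\cdot)|$ is non-decreasing, and each $\mathcal{A}^{(n)}_1$-threshold lies strictly on the reachable side of its $t=\infty$ limit, forcing the associated first-passage times to be finite w.h.p. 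To control $t_c^{(n)}$, the simultaneous hitting time over all $k\in\mathbb{N}$, I will choose $K=K(\delta)$ such that $\sum_{j\geq K} j\hat{d}_j^{(n)}(Q+\epsilon)^j<\delta$ uniformly in large $n$ (possible by Claim \ref{dalem} and $\hat{d}_j^{(n)}\to\mathbb{P}(D=j)$); this makes the $k\geq K$ conditions trivial via (A4), and the $k<K$ conditions reduce to a finite union of first-passage times.

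The second claim rests on the observation that the three $\mathcal{A}^{(n)}_1$-thresholds all correspond to the same ``susceptibility parameter'' $\theta=Q+\epsilon$: under the susceptibility-set parametrisation in which a vertex of degree $d$ is susceptible with probability $\theta^d$ and a half-edge is unpaired with probability $\theta^2$ (as one checks by extending the arguments behind the limits above to intermediate times), one has $|\mathcal{E}^{(n)}_S|/\ell(n)\approx \mathbb{E}[\theta^{\tilde{D}}]$, $|\mathcal{E}^{(n)}_P|/\ell(n)\approx 1-\theta^2$, and the tail-sum analog, and each of these equals the respective $\mathcal{A}^{(n)}_1$-threshold exactly when $\theta=Q+\epsilon$. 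Hence asymptotically the three first-passage times coincide, and $|S^{(n)}(t_1^{(n)}(\epsilon))|/n$ converges in probability to $\mathbb{E}[(Q+\epsilon)^D]$. Combined with $|S^{(n)}(\infty)|/n\parrow \mathbb{E}[Q^D]=q^*$ from Lemma \ref{lemint1}, this yields the asymptotic gap $\mathbb{E}[(Q+\epsilon)^D]-\mathbb{E}[Q^D]>0$; taking $c_1$ strictly below this value gives the claim.

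The principal obstacle is making the intermediate-time statement $|S^{(n)}(t_1^{(n)}(\epsilon))|/n\parrow \mathbb{E}[(Q+\epsilon)^D]$ rigorous under only the weak moment assumption (A3), since a full Volz ODE convergence along the lines of \cite{volz2008sir,Decr12} would need extra regularity on the degree distribution. To circumvent this, I will apply the susceptibility-set formalism directly to the subgraph explored up to $t_1^{(n)}(\epsilon)$ and establish only the weaker one-sided bound $|S^{(n)}(t_1^{(n)}(\epsilon))|\geq (\mathbb{E}[(Q+\epsilon)^D]-\delta) n$ w.h.p., which together with the final-size estimate for $|S^{(n)}(\infty)|$ furnishes the required $c_1 n$ gap.
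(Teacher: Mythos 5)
Your overall strategy --- establishing final-state limits for $|\mathcal{E}^{(n)}_S(\infty)|$, $|\mathcal{E}^{(n)}_P(\infty)|$ and the degree-tail sums, exploiting the strict slack $\epsilon$ together with monotonicity of the processes to obtain $\mathcal{A}^{(n)}_1(\epsilon)$ and $\mathcal{A}^{(n)}_2(\epsilon)$, and deriving the $c_1 n$ gap from a one-sided bound on $|S^{(n)}(t_1^{(n)}(\epsilon))|$ at the common susceptibility parameter --- is essentially the paper's. The paper implements it by Poissonizing the number of explored elements of $\mathbf{x}^{(n)}$ (via the random indices $K_1$, $K_2$), so that the half-edges are marked i.i.d.\ and a weak law of large numbers applies directly; your appeal to the susceptibility-set method behind Lemma \ref{lemint1} plays the same role, and your handling of $t_a^{(n)}$, $t_b^{(n)}$, of $\mathcal{A}_2^{(n)}(\epsilon)$ by a finite union bound over $k\leq\lfloor 1/\epsilon\rfloor$, and of the second claim is sound.

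The genuine gap is in your treatment of $t_c^{(n)}(\epsilon)$. Choosing $K$ so that $\sum_{j\geq K} j\hat{d}^{(n)}_j(Q+\epsilon)^j<\delta$ makes the right-hand side of the defining inequality small, which makes the constraint for $k\geq K$ \emph{more} stringent, not trivial: for $k$ so large that $n\sum_{j\geq k} j\hat{d}^{(n)}_j(Q+\epsilon)^j<k$, the condition forces $S^{(n)}(t)$ to contain \emph{no} vertex of degree at least $k$. Assumption (A4) only disposes of the $k$ beyond the support of $D$; when $D$ has unbounded support you must show that, simultaneously for all large $k$, the number $B^{(n)}_k$ of degree-$k$ vertices remaining susceptible satisfies $B^{(n)}_k\leq n\mathbb{P}(D^{(n)}=k)(Q+\epsilon)^k$ w.h.p., and a union bound over the infinitely many $k$ requires summable violation probabilities, which do not come for free (the crude bound $\mathbb{P}(B^{(n)}_k>0)\leq n\mathbb{P}(D^{(n)}=k)(Q+\epsilon/2)^k$ sums to $\Theta(n)$). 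This uniform-in-$k$ control is where the paper's appendix spends most of its effort: it reduces the tail-sum condition to a per-degree condition, uses Chebyshev for $k\leq k_0$, and for $k>k_0$ splits according to whether $n\mathbb{P}(D^{(n)}=k)(Q+3\epsilon/4)^k\leq 1$ (Markov's inequality plus a geometric series) or $>1$ (a Chernoff bound, \cite[Cor.~2.4]{Jans11}), both contributions being made small by taking $k_0$ large. Without an argument of this type your proof that $\mathbb{P}(t_c^{(n)}(\epsilon)<\infty\mid\mathcal{M}^{(n)})\to 1$ is incomplete.
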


\begin{proof}
%It is enough to show that  for all $\epsilon \in (0,\psi (1-\tilde{q}^*))$ and $i \in \{a,b,c\}$, the random times $t_i^{(n)}(\epsilon)$ are  finite w.h.p.\ and there exist $c_1>0$, such that
%$|S^{(n)}(t_i^{(n)}(\epsilon))|-|S^{(n)}(\infty)|>c_1 n$ w.h.p.
We start with some definitions.
Let $K_1 = K_1^{(n)}(\epsilon)$ be a Poisson distributed random variable with expectation 
$\ell(n) |\log(Q + \epsilon/2)|$
and let $K_2 = K_2^{(n)}(\epsilon)$ be a Poisson distributed random variable with expectation 
$\ell(n) |\log(Q - \epsilon/2)|$. Both $K_1$ and $K_2$ are independent of the epidemic process.
Let $\mathbf{x}^{(n)}$ and $\mathbf{x}^{(n)}(t)$ be  as in Section \ref{sec:construct} and define for $i \in \{1,2\}$ 
$$t'(K_i)=\inf\{t>0;|\mathbf{x}^{(n)}(t)|\geq K_i\}.$$ 
For notational convenience define $\bar{S}^{(n)}_i$ for $i \in \{1,2\}$  as the set of vertices which have no half-edge among the first $K_i$ elements of $\mathbf{x}^{(n)}$. So  $\bar{S}^{(n)}_1$ is equal to $S^{(n)}(t'(K_1))$ on $\mathcal{M}^{(n)}$.  
%and the sequence  $\mathbf{x}^{(n)}_i$ as the sequence of the first $K_i$ elements of $\mathbf{x}^{(n)}$.

Our strategy is now as follows.
First we show that
\begin{equation}
\label{K1finiteA}
\mathbb{P}(K_1<|\mathbf{x}^{(n)}(\infty)|<K_2|\mathcal{M}^{(n)})  \to 1
\end{equation}
and that there exists $c_1>0$, such that 
\begin{equation}
\label{eeslemma1}
\mathbb{P}\left(|S^{(n)}(t'(K_1))|-|S^{(n)}(\infty)|>c_1 n|\mathcal{M}^{(n)}\right) \to 1.
\end{equation}
After that we show that for $i \in \{a,b,c\}$ 
\begin{equation}
\label{t1bounds}
\mathbb{P}\left(\left|\mathbf{x}^{(n)}\left(t_i^{(n)}(\epsilon)\right)\right|<K_1 \mid \mathcal{M}^{(n)}\right)  \to 1
\end{equation}
and that\\ (i) the number of half edges that belong to vertices in $V^{(n)}$ that have none of their half-edges among the first $K_2$ elements of $\mathbf{x}^{(n)}$ exceeds with high probability
$\mathbb{E}[(Q - \epsilon)^{\tilde{D}}] \ell(n)$,\\
(ii) the number of half-edges that are themselves or are paired with half edges among the first 
$K_2$ elements of $\mathbf{x}^{(n)}$ is with high probability less than $ \ell(n) -1 - (Q - \epsilon)^2 \ell(n)$,\\
(iii) For every $k \in \mathbb{N}$, the number of half edges that belong to vertices of degree at least $k$ in $V^{(n)}$ that have none of their half-edges among the first $K_2$ elements of $\mathbf{x}^{(n)}$ is with high probability at least
$\sum_{j=k}^{\infty} n j \mathbb{P}(D^{(n)} = j) (Q - \epsilon)^{j}$.
Together this proves the Lemma.

Because the elements of $\mathbf{x}^{(n)}$ are i.i.d.\ and uniform among all $\ell(n)$ half-edges, we have by well-known properties of the Poisson distribution (see e.g.\ \cite[p.~317]{Resn13}) that the number of times a given half-edge is among the first $K_1$ (resp.\ $K_2$) elements of $\mathbf{x}^{(n)}$ is Poisson distributed with expectation $|\log(Q + \epsilon/2)|$ (resp.\ Poisson distributed with expectation $|\log(Q - \epsilon/2)|$) and independent for different half-edges. This implies that the events that different half-edges are not  among the first $K_1$  elements of $\mathbf{x}^{(n)}$ are independent and have probability $$e^{-|\log(Q + \epsilon/2)|}= Q + \epsilon/2.$$ Similarly,  
the events that different half-edges are not among the first $K_2$  elements of $\mathbf{x}^{(n)}$ are independent and have probability $Q - \epsilon/2$.
So, the probability that none of the half-edges belonging to a uniformly chosen vertex is part of the first $K_1$ elements of $\mathbf{x}$  is given by
$$
\sum_{k=0}^{\infty} \mathbb{P}(D^{(n)}=k)  (Q + \epsilon/2)^k \to \sum_{k=0}^{\infty} \mathbb{P}(D=k) (Q + \epsilon/2)^k,
$$ 
and the probability that none of the half-edges belonging to a uniformly chosen vertex is part of the first $K_2$ elements of $\mathbf{x}$  is given by
$$
\sum_{k=0}^{\infty} \mathbb{P}(D^{(n)}=k)  (Q - \epsilon/2)^k \to \sum_{k=0}^{\infty} \mathbb{P}(D=k) (Q - \epsilon/2)^k,
$$ 
where the limits follows from  $D^{(n)} \darrow D$ and bounded convergence.
%While the probability that a uniformly chosen half-edge belongs to a vertex of which none of the half-edges  is part of the first $K_1$ elements of $\mathbf{x}^{(n)}$  is given by
%$$\sum_{k=0}^{\infty} \mathbb{P}(\tilde{D}^{(n)}=k) (Q + \epsilon/2)^k,$$
%which converges to 
%$$\sum_{k=0}^{\infty} \mathbb{P}(\tilde{D}=k) (Q + \epsilon/2)^k= \mathbb{E}[(Q+\epsilon/2)^{\tilde{D}}],$$
%which is strictly less than $\mathbb{E}[(Q+\epsilon)^{\tilde{D}}]$.

So, by a variant of the (weak) law of large numbers (e.g.\ \cite[Problem 7.11.20]{Grim01}), 
we obtain that the fraction of the vertices with no half-edges among the first $K_1$ half-edges of  $\mathbf{x}^{(n)}$ (i.e.\ $n^{-1}|\bar{S}^{(n)}_1|$)  converges in probability to 
$\sum_{k=0}^{\infty} \mathbb{P}(D=k) (Q + \epsilon/2)^k$
and 
that the fraction of the vertices with no half-edges among the first $K_2$ half-edges of  $\mathbf{x}^{(n)}$ (i.e.\ $n^{-1}|\bar{S}^{(n)}_2|$)  converges in probability to 
$\sum_{k=0}^{\infty} \mathbb{P}(D=k) (Q - \epsilon/2)^k$.

By Lemma \ref{lemint1}, equation (\ref{qstar}) and by
$q^*= \sum_{k=0}^{\infty} \mathbb{P}(D=k) Q^k $
we have 
\begin{equation}
\label{eqlemint}
\left(\frac{1}{n}|S^{(n)}(\infty)|-\sum_{k=0}^{\infty} \mathbb{P}(D=k) Q^k\right)\ind(\mathcal{M}^{(n)}) \parrow 0 .
\end{equation}
Because $\sum_{k=0}^{\infty} \mathbb{P}(D^{(n)}=k) x^k$ is strictly increasing for $x \in [0,1)$, 
%and $D^{(n)} \darrow D$, 
the above implies immediately that \eqref{K1finiteA} and \eqref{eeslemma1} hold.

In a similar fashion we obtain that the probability that a uniformly chosen half-edge belongs to a vertex of which none of the half-edges  is part of the first $K_1$ elements of $\mathbf{x}^{(n)}$  is given by
$$\sum_{k=0}^{\infty} \mathbb{P}(\tilde{D}^{(n)}=k) (Q + \epsilon/2)^k$$
and the probability that a uniformly chosen half-edge belongs to a vertex of which none of the half-edges  is part of the first $K_2$ elements of $\mathbf{x}^{(n)}$  is given by
$$\sum_{k=0}^{\infty} \mathbb{P}(\tilde{D}^{(n)}=k) (Q - \epsilon/2)^k.$$
Using the same law of large numbers argument as above we obtain that the fraction of half-edges belonging to vertices of which none of the half-edges  is part of the first $K_1$ elements of $\mathbf{x}^{(n)}$  converges in probability to $\sum_{k=0}^{\infty} \mathbb{P}(\tilde{D}=k) (Q + \epsilon/2)^k$,
which is strictly less than $\mathbb{E}[(Q + \epsilon)^{\tilde{D}}]$.
Similarly, 
the fraction of half-edges belonging to vertices of which none of the half-edges  is part of the first $K_2$ elements of $\mathbf{x}^{(n)}$  converges in probability to $\sum_{k=0}^{\infty} \mathbb{P}(\tilde{D}=k) (Q - \epsilon/2)^k$,
which is strictly more than $\mathbb{E}[(Q - \epsilon)^{\tilde{D}}]$.
Together with \eqref{K1finiteA} this implies  
$$\mathbb{P}(t_a^{(n)}(\epsilon)<\infty|\mathcal{M}^{(n)}) \to 1$$
and 
$$\mathbb{P}\left(\frac{1}{\ell(n)} |\mathcal{E}^{(n)}_S(\infty)| > \mathbb{E}[(Q - \epsilon)^{\tilde{D}}]\mid \mathcal{M}^{(n)}\right) \to 1. $$

Now we turn our attention to $|\mathcal{E}^{(n)}_P(t)|$.
In $G^{(n)}$, all half-edges are paired uniformly at random. For a half-edge not to be part of $\mathcal{E}^{(n)}_P(t)$, neither the half-edge itself nor its partner should be part of $\mathbf{x}^{(n)}(t)$. 
Again the probability that two given half-edges are not among the first $K_1$ (resp.\ $K_2$) elements of $\mathbf{x}^{(n)}$ is $(Q +\epsilon/2)^2$ (resp.\  $(Q -\epsilon/2)^2$).
So using \eqref{K1finiteA} and the above law of large numbers again we obtain
$$
\left(\frac{\ell(n)-|\mathcal{E}^{(n)}_P(t'(K_1))|}{\ell(n)}- (Q +\epsilon/2)^2\right)\ind(t'(K_1)<\infty) \parrow 0.
$$
By 
$$
\frac{\ell(n)-|\mathcal{E}^{(n)}_P(t'(K_1))|-1}{\ell(n)} < \frac{\ell(n)-|\mathcal{E}^{(n)}_P(t'(K_1))|}{\ell(n)}$$
and \eqref{K1finiteA} we then obtain that $\mathbb{P}(t'(K_1)>t^{(n)}_b(\epsilon)|\mathcal{M}^{(n)}) \to 1$.
Furthermore, again by \eqref{K1finiteA} we also obtain that
$$
\mathbb{P}\left(\frac{\ell(n)-|\mathcal{E}^{(n)}_P(\infty)|-1}{\ell(n)}> (Q -\epsilon)^2\mid\mathcal{M}^{(n)}\right) \parrow 1.
$$

Finally, we consider $\sum_{v\in \bar{S}_1^{(n)}} d_v\ind(d_v \geq k)$.
By the definition of $\bar{S}_1^{(n)}$ and $D^{(n)}$ this sum is equal to $\sum_{j=k}^{\infty} j B^{(n)}_{j}$,  where 
$B^{(n)}_{j}$ ($j \in \mathbb{N}$) are independent binomially distributed random variable with parameters $n \mathbb{P}(D^{(n)}=j)$ and $(Q+\epsilon/2)^{j}$.

We want to show that $\mathbb{P}(t'(K_1)>t_c^{(n)}(\epsilon)|\mathcal{M}^{(n)})\to 1$. That is, we want 
\begin{equation}
\label{Dhatfirst}
\mathbb{P}\left(\sum_{j=k}^{\infty} j B^{(n)}_{j} \leq  n \sum_{j=k}^{\infty} j\hat{d}^{(n)}_j (Q+\epsilon)^j \mbox{ for all $k \in \mathbb{N}$} \right) \to 1.
\end{equation}
Since $\mathbb{P}(D^{(n)}=j)$ is  at most $\hat{d}^{(n)}_j$ for all $j \in \mathbb{N}$, it is enough to prove that 
\begin{equation}\label{Chebyuse}
\mathbb{P}\left(\sum_{j=k}^{\infty} j B^{(n)}_{j} \leq  n \sum_{j=k}^{\infty} j\mathbb{P}(D^{(n)}=j) (Q+\epsilon)^j \mbox{ for all $k \in \mathbb{N}$} \right) \to 1.
\end{equation}
if for given $k$ we have $\mathbb{P}(D\geq k) =0$, then by assumption (A4) of Assumptions \ref{mainassump} for all large enough $n$
$$\sum_{j=k}^{\infty} j B^{(n)}_{j}= n \sum_{j=k}^{\infty} j\mathbb{P}(D^{(n)}=j) (Q+\epsilon)^j=0.$$ So assume  
$\mathbb{P}(D\geq k) > 0$.
\begin{eqnarray*}
\mathbb{E}\left[\sum_{j=k}^{\infty} j B^{(n)}_{j}\right] & = & n \sum_{j=k}^{\infty} \mathbb{P}(D^{(n)}=j) j (Q+\epsilon/2)^j\\
Var\left[\sum_{j=k}^{\infty} j B^{(n)}_{j}\right]  & = & n \sum_{j=k}^{\infty} \mathbb{P}(D^{(n)}=j)j^2 (Q+\epsilon/2)^j  (1-(Q+\epsilon/2)^j)\\
\ & \leq &  n \sum_{j=k}^{\infty} \mathbb{P}(D^{(n)}=j)j^2 (Q+\epsilon/2)^j.
\end{eqnarray*}
So applying Chebyshev's inequality,
we obtain that 
\begin{equation}\label{Chebmult}
\begin{split}
\ &  n\mathbb{P}\left(\sum_{j=k}^{\infty} j B^{(n)}_{j} >  n \sum_{j=k}^{\infty} j\mathbb{P}(D^{(n)}=j) (Q+\epsilon)^j\right)\\
\leq & \frac{ \sum_{j=k}^{\infty} \mathbb{P}(D^{(n)}=j)j^2 (Q+\epsilon/2)^j}{ \left(\sum_{j=k}^{\infty} j\mathbb{P}(D^{(n)}=j) [(Q+\epsilon)^j-(Q+\epsilon/2)^j]\right)^2}\\
\to & \frac{ \mathbb{E}[\ind(D\geq k)D^2 (Q+\epsilon/2)^{D}]}{\left(\mathbb{E}\left[ \ind(D\geq k) D [(Q+\epsilon)^{D}-(Q+\epsilon/2)^{D}]\right]\right)^2},
\end{split}
\end{equation}
where we have used $D^{(n)} \to D$ and that $\sum_{j=k}^{\infty} j^2 (Q+\epsilon)^j \to 0$ as $k \to \infty$.
The quotient in \eqref{Chebmult} is independent of $n$ and by the assumption  
$\mathbb{P}(D\geq k) > 0$ it  is also finite. So,
$$\mathbb{P}\left(\sum_{j=k}^{\infty} j B^{(n)}_{j} >  n \sum_{j=k}^{\infty} j\mathbb{P}(D^{(n)}=j) (Q+\epsilon)^j\right) \to 0 \qquad \mbox{for all $k \in \mathbb{N}$.} $$
In other words, for all $k_0 \in \mathbb{N}$ and all $\epsilon_1>0$ there exists $n_0 \in \mathbb{N}$ such that for all $n >n_0$
$$\sum_{k=1}^{k_0} \mathbb{P}\left(\sum_{j=k}^{\infty} j B^{(n)}_{j} >  n \sum_{j=k}^{\infty} j\mathbb{P}(D^{(n)}=j) (Q+\epsilon)^j\right) < \epsilon_1/2.$$

So, in order to prove that 
\begin{equation*}
\mathbb{P}\left(\sum_{j=k}^{\infty} j B^{(n)}_{j} \leq  n \sum_{j=k}^{\infty} j\mathbb{P}(D^{(n)}=j) (Q+\epsilon)^j \mbox{ for all $k \in \mathbb{N}$} \right) \to 1,
\end{equation*}
it is enough to show that for every $\epsilon_1>0$, there exists  $k_0 \in \mathbb{N}$ such that
\begin{equation*}
\mathbb{P}\left(\sum_{j=k}^{\infty} j B^{(n)}_{j} \leq  n \sum_{j=k}^{\infty} j\mathbb{P}(D^{(n)}=j) (Q+\epsilon)^j \mbox{ for all $k \in \mathbb{N}_{> k_0}$} \right) > 1-\epsilon_1/2.
\end{equation*}
which holds if for every $\epsilon_1>0$, there exists  $k_0 \in \mathbb{N}$ such that
\begin{equation*}
\mathbb{P}\left( B^{(n)}_{k} \leq  n \mathbb{P}(D^{(n)}=k) (Q+\epsilon)^k \mbox{ for all $k \in \mathbb{N}_{> k_0}$} \right) > 1-\epsilon_1/2.
\end{equation*}
Observe that 
\begin{multline*}
\mathbb{P}\left( B^{(n)}_{k} \leq  n \mathbb{P}(D^{(n)}=k) (Q+\epsilon)^k \mbox{ for all $k \in \mathbb{N}_{\geq k_0}$} \right)\\
\geq   1- \sum_{k=k_0}^{\infty} \mathbb{P}\left(B^{(n)}_{k} >  n \mathbb{P}(D^{(n)}=k) (Q+\epsilon)^k\right). 
\end{multline*}

Define 
$$\mathcal{K}^{(n)}_0(\epsilon)= \left\{k\in \mathbb{N}_{>k_0}; n \mathbb{P}(D^{(n)}=k) (Q+3\epsilon/4)^k \leq 1\right\}.$$
We should prove that for every $\epsilon_1>0$, there exists  $k_0 \in \mathbb{N}$ such that
\begin{equation*}
\begin{split}
\ & \sum_{k=k_0+1}^{\infty} \mathbb{P}\left(B^{(n)}_{k} >  n \mathbb{P}(D^{(n)}=k) (Q+\epsilon)^k\right)\\
= & \sum_{k\in \mathcal{K}^{(n)}_0(\epsilon)} \mathbb{P}\left(B^{(n)}_{k} >  n \mathbb{P}(D^{(n)}=k) (Q+\epsilon)^k\right)\\
\ & + \sum_{k\in \mathbb{N}_{>k_0} \setminus \mathcal{K}^{(n)}_0(\epsilon)} \mathbb{P}\left(B^{(n)}_{k} >  n \mathbb{P}(D^{(n)}=k) (Q+\epsilon)^k\right)  <\epsilon_1/2.
\end{split}
\end{equation*}
To do this, we consider the two sums in the middle term separately.

We have 
\begin{equation}
\begin{array}{ll}
\ & \displaystyle\sum_{k\in \mathcal{K}^{(n)}_0(\epsilon)} \mathbb{P}\left( B^{(n)}_{k} >  n \mathbb{P}(D^{(n)}=k) (Q+\epsilon)^k\right)  \\
\leq & \displaystyle\sum_{k\in \mathcal{K}^{(n)}_0(\epsilon)} \mathbb{P}\left( B^{(n)}_{k} >0\right) \\
= & \displaystyle\sum_{k\in \mathcal{K}^{(n)}_0(\epsilon)} 
1-\left(1- (Q+\epsilon/2)^k\right)^{n \mathbb{P}(D^{(n)}=k)}\\
 \leq & \displaystyle\sum_{k\in \mathcal{K}^{(n)}_0(\epsilon)} n \mathbb{P}(D^{(n)}=k) (Q+3\epsilon/4)^k \left(\frac{Q+\epsilon/2}{Q+3\epsilon/4}\right)^k\\
 \leq & \displaystyle\sum_{k\in \mathcal{K}^{(n)}_0(\epsilon)}  \left(\frac{Q+\epsilon/2}{Q+3\epsilon/4}\right)^k\\
 \leq & \displaystyle\sum_{k = k_0+1}^{\infty}  \left(\frac{Q+\epsilon/2}{Q+3\epsilon/4}\right)^k\\
 = & \left(\frac{Q+\epsilon/2}{Q+3\epsilon/4}\right)^{k_0+1}\frac{4(Q+3\epsilon/4)}{\epsilon}, 
\end{array}
\end{equation}
which is less than $\epsilon_1/4$ for all large enough $k_0$.

Now consider 
$$\sum_{k\in \mathbb{N}_{>k_0} \setminus \mathcal{K}^{(n)}_0(\epsilon)}\mathbb{P}\left( B^{(n)}_{k} >  n \mathbb{P}(D^{(n)}=k) (Q+\epsilon)^k\right).$$
Let $k_0'$ be such that $\frac{(Q+\epsilon)^{k_0'}}{(Q+\epsilon/2)^{k_0'}}>7$. Further assume that $k_0$ was chosen such that $k_0>k_0'$.
Then by  \cite[Cor.\ 2.4]{Jans11} we obtain that for $k\in \mathbb{N}_{>k_0} \setminus \mathcal{K}^{(n)}_0(\epsilon)$
\begin{eqnarray*}
\ & \ &  \mathbb{P}\left( B^{(n)}_{k} >  n \mathbb{P}(D^{(n)}=k) (Q+\epsilon)^k\right)\\
\ & = &  \mathbb{P}\left( B^{(n)}_{k} >  n \mathbb{P}(D^{(n)}=k) (Q+3\epsilon/4)^k \left(\frac{Q+\epsilon}{Q+3\epsilon/4}\right)^k\right)\\
\ & \leq & \mathbb{P}\left( B^{(n)}_{k} >  \left(\frac{Q+\epsilon}{Q+3\epsilon/4}\right)^k\right)\\
\ & \leq & \exp\left[-\left(1+\frac{\epsilon}{4Q+3\epsilon}\right)^k\right]\\
\ & \leq & \exp\left[-\left(1+k\left(\frac{\epsilon}{4Q+3\epsilon}\right)\right)\right].
\end{eqnarray*}
So,
\begin{multline*}
\sum_{k\in \mathbb{N}_{>k_0} \setminus \mathcal{K}^{(n)}_0(\epsilon)} \mathbb{P}\left(B^{(n)}_{k} >  n \mathbb{P}(D^{(n)}=k) (Q+\epsilon)^k\right)\\
\leq \sum_{k\in \mathbb{N}_{>k_0} \setminus \mathcal{K}^{(n)}_0(\epsilon)} e^{-1} \exp\left[-k\left(\frac{\epsilon}{4Q+3\epsilon}\right)\right]
\leq e^{-1}\sum_{k=k_0+1}^{\infty} \left(e^{-\frac{\epsilon}{4Q+3\epsilon}}\right)^k\\
= e^{-1} \left(e^{-\frac{\epsilon}{4Q+3\epsilon}}\right)^{k_0+1}\frac{1}{1-e^{-\frac{\epsilon}{4Q+3\epsilon}}},
\end{multline*}
which is less than $\epsilon_1/4$ for all large enough $k_0$.

So we have proved that for every $\epsilon_1>0$, there exists  $k_0 \in \mathbb{N}$ such that
$$
\sum_{k=k_0}^{\infty} \mathbb{P}\left(B^{(n)}_{k} >  n \mathbb{P}(D^{(n)}=k) (Q+\epsilon)^k\right) <\epsilon_1/2
$$
as desired. Together with \eqref{K1finiteA} this implies that  $\mathbb{P}(t^{(n)}_c(\epsilon)<t'(K_1)|\mathcal{M}^{(n)})$ converges to 1 and thus that $\mathbb{P}(t^{(n)}_c(\epsilon)<\infty|\mathcal{M}^{(n)})$ converges to 1.

To prove that  
$$\mathbb{P}\left(\sum_{v\in S^{(n)}(\infty) }  d_v\ind(d_v \geq k) \geq   
\sum_{j=k}^{\infty} n j \mathbb{P}(D=j) 
(Q - \epsilon)^{j} \mbox{ for all $k \in \mathbb{N}_{\leq \lfloor1/\epsilon\rfloor}$}\right)$$
converges to 1, it is by \eqref{K1finiteA} enough to prove that
\begin{equation}
\label{lasteq}
\mathbb{P}\left(\sum_{v\in S_2^{(n)}}  d_v\ind(d_v \geq k) \geq   
\sum_{j=k}^{\infty} n j \mathbb{P}(D=j) 
(Q - \epsilon)^{j} \mbox{ for all $k \in \mathbb{N}_{\leq \lfloor1/\epsilon\rfloor}$}\right).
\end{equation}
converges to 1.
Using the same law of large numbers argument used throughout this appendix we obtain that 
$n^{-1}\sum_{v\in S_2^{(n)}}  d_v\ind(d_v \geq k)$ converges in probability to 
$\sum_{j=k}^{\infty} j \mathbb{P}(D=j) 
(Q - \epsilon/2)^{j}$, which immediately implies \eqref{lasteq}.
\end{proof}

\providecommand{\noopsort}[1]{}
\providecommand{\bysame}{\leavevmode\hbox to3em{\hrulefill}\thinspace}
\providecommand{\MR}{\relax\ifhmode\unskip\space\fi MR }
% \MRhref is called by the amsart/book/proc definition of \MR.
\providecommand{\MRhref}[2]{%
  \href{http://www.ams.org/mathscinet-getitem?mr=#1}{#2}
}
\providecommand{\href}[2]{#2}

\end{document}